\theoremstyle{plain} \newtheorem{thm}{Theorem}[subsection]
\theoremstyle{plain} \newtheorem{cor}[thm]{Corollary}
\theoremstyle{plain} \newtheorem{prop}[thm]{Proposition}
\theoremstyle{plain} \newtheorem{conj}[thm]{Conjecture}
\theoremstyle{plain} \newtheorem{lem}[thm]{Lemma}
\theoremstyle{definition} \newtheorem{df}[thm]{Definition}
\theoremstyle{remark} \newtheorem{rmk}[thm]{Remark}
\theoremstyle{remark} 
\newcommand{\Bn}{B_{2n}}
\newcommand{\B}[1]{B_{#1} }
\newcommand{\DBCs}[1]{\Sigma(#1)\#(S^{2}\times S^{1})}
\newcommand{\SxS}{S^{2}\times S^{1}}
\newcommand{\sfr}{\mathfrak{s}}
\newcommand{\sO}{\mathfrak{s}_{0}}
\newcommand{\DBC}[1]{\Sigma(#1)}
\newcommand{\Ztil}{\tld{\mathcal{Z}}}
\newcommand{\Zcal}{\mathcal{Z}}
\newcommand{\G}{\mathcal{G}}
\newcommand{\Gtr}{\underline{G}}
\newcommand{\Fcal}{\mathcal{F}}
\newcommand{\DD}{\mathcal{D}}
\newcommand{\h}{\mathcal{H}}
\newcommand{\del}{\partial}
\newcommand{\tld}[1]{\widetilde{#1}}
\newcommand{\Gtil}{\tld{\mathcal{G}}}
\newcommand{\conf}[1]{\text{Conf}\,^{#1}(\mathbb{C})}
\newcommand{\red}[1]{\underline{#1}}
\newcommand{\OS}{Ozsv\'ath and Szab\'o }
\newcommand{\Symg}{\text{Sym}^{g}(\Sigma)}
\newcommand{\HFM}{\widehat{HF}(M)}
\newcommand{\Us}{\mathfrak{U}_{\mathfrak{s}}}
\newcommand{\HFxs}[1]{\widehat{HF}(\DBCs{#1}, \mathfrak{s})}
\newcommand{\CF}[1]{\widehat{CF}(\DBC{K})}
\newcommand{\HF}[1]{\widehat{HF}(\DBC{K})}
\newcommand{\Sc}[1]{\text{Spin}^{c}(\DBC{#1})}
\newcommand{\ah}{\widehat{\alpha}}
\newcommand{\bh}{\widehat{\beta}}
\newcommand{\ba}{\boldsymbol{\alpha}}
\newcommand{\bb}{\boldsymbol{\beta}}
\newcommand{\bg}{\boldsymbol{\gamma}}
\newcommand{\bah}{\widehat{\ba}}
\newcommand{\bbh}{\widehat{\bb}}
\newcommand{\bat}{\boldsymbol{\tld{\alpha}}}
\newcommand{\bbt}{\boldsymbol{\tld{\beta}}}
\newcommand{\Tah}{\mathbb{T}_{\bah}}
\newcommand{\Tbh}{\mathbb{T}_{\bbh}}
\newcommand{\Tahr}{\red{\mathbb{T}}_{\bah}}
\newcommand{\Tbhr}{\red{\mathbb{T}}_{\bbh}}
\newcommand{\Ta}{\mathbb{T}_{\ba}}
\newcommand{\Tb}{\mathbb{T}_{\bb}}
\newcommand{\Tg}{\mathbb{T}_{\bg}}
\newcommand{\Tap}{\tor{\ba'}}
\newcommand{\Tbp}{\tor{\bb'}}
\newcommand{\bx}{\mathbf{x}}
\newcommand{\be}{\mathbf{e}}
\newcommand{\by}{\mathbf{y}}
\newcommand{\bw}{\mathbf{w}}
\newcommand{\bz}{\mathbf{z}}
\newcommand{\thetabb}{\boldsymbol{\theta}_{\bb \bb'}}
\newcommand{\thet}[1]{\boldsymbol{\theta}_{#1}}
\newcommand{\tor}[1]{\mathbb{T}_{#1}}
\newcommand{\AD}{\nabla}
\newcommand{\Zcaltwo}{\mathbb{Z}/2\mathbb{Z}}
\newcommand{\F}[1]{\mathbb{F}\left< #1 \right>}
\newcommand{\bda}{\boldsymbol{a}}
\numberwithin{equation}{section}
\begin{document}
\allsectionsfont{\mdseries\itshape}
\sectionfont{\centering \mdseries \itshape}

\title{The anti-diagonal filtration: reduced theory and applications} 
\author{Eamonn Tweedy} 
\maketitle

\begin{abstract} 
Given a knot $K \subset S^3$, Seidel and Smith described a graded cohomology group $Kh_{symp,inv}(K)$, a variant of their symplectic Khovanov cohomology group \cite{ss:R2}.  They also constructed a spectral sequence converging to the Heegaard Floer homology group $\widehat{HF}(\DBC{K}\#(S^{2}\times S^{1}))$ with $E^{1}$-page isomorphic to a summand of $Kh_{symp,inv}(K)$.  A previous paper \cite{et:R} showed that the higher pages of this spectral sequence are knot invariants.  Here we discuss a reduced version of the spectral sequence which directly computes $\widehat{HF}(\DBC{K})$.  Under some degeneration conditions, one obtains a new absolute Maslov grading on the group $\widehat{HF}(\DBC{K})$.  This occurs when $K$ is a two-bridge knot, and we compute the grading in this case.  We also extract some $\mathbb{Q}$-valued knot invariants from this construction.
\end{abstract} 

\clearpage

\section{INTRODUCTION}\label{sec:intro}

Let $K \subset S^{3}$ be a knot, and let $\DBC{K}$ denote the double cover of $S^{3}$ branched along $K$.  This paper is a continuation of a previous one \cite{et:R} in which we studied the invariance properties of the spectral sequence whose $E^{1}$ page is isomorphic to a direct summand of a particular variant of Seidel and Smith's symplectic Khovanov cohomology and which converges to the Heegaard Floer homology group $\widehat{HF}(\DBC{K}\#(S^{2}\times S^{1}))$.  In \cite{et:R}, we proved that the filtered chain homotopy type of the filtered chain complex inducing this spectral sequence is a knot invariant; this implies that the higher pages of the spectral sequence are knot invariants also.  The present paper will give a definition for a reduced version of the theory in the form of a filtration on the Heegaard Floer chain complex $\widehat{CF}(\DBC{K})$.

Let $b \in \B{2n}$ be a braid whose plat closure is a diagram of the knot $K$.  Reviewing Manolescu's construction in \cite{cm:R} and following the work of Bigelow \cite{big:jones}, we described in \cite{et:R} how to define a fork diagram for $b$ and compute a function $R: \G \rightarrow \mathbb{Z}+\frac{1}{2}$, where $\G$ is a set of Bigelow generators in the diagram.  The notion of a fork diagram, as well as the definitions of the gradings $T$ and $Q$ are originally due to Bigelow \cite{big:jones}.  As suggested in \cite{cm:R}, one can define a reduced version of $R$ (denoted by $\red{R}$) by considering a \textit{reducible fork diagram};  a set $\red{\G}$ of reduced Bigelow generators are determined by omitting a pair of arcs from the fork diagram in a prescribed way.  We'll review the notion of a fork diagram in more detail in Section \ref{sec:redR} below, as well as describe how to reduce them and how to define reduced versions of the gradings $P$, $Q$, and $T$ that appear in \cite{cm:R}.  A certain holomorphic volume form is used in \cite{et:R} to define a grading $\tld{R}$ on the unreduced Bigelow generators, and here we'll use an analogous form to define a reduced version $\red{\tld{R}}:\red{\G} \rightarrow \mathbb{Z}$.  One can in fact compute this grading via the formula
\begin{equation*}
\red{\tld{R}} = \red{T} - \red{Q} + \red{ P},
\end{equation*}
where the functions $\red{Q},\red{T}, \red{ P}:\red{\G} \rightarrow \mathbb{Z}$ are analogous to their unreduced counterparts $Q$, $T$, and $P$ found in \cite{et:R}.

We acquire $\red{R}$ from $\red{\tld{R}}$ via a rational shift $s_{\red{R}}$; let
\begin{equation*}
\red{R} = \red{\tld{R}} + s_{\red{R}}(b,D),\quad \text{where} \quad s_{\red{R}}(b,D) = \frac{e(b) - w(D) -2(n-1)}{4}.
\end{equation*}

These reduced Bigelow generators are in one-to-one correspondence with a set of generators for $\widehat{CF}(\DBC{K})$ (notice that the $\SxS$ summand found in the unreduced theory has been removed).  Since $\DBC{K}$ is a rational homology sphere, every $\mathfrak{s} \in \text{Spin}^{c}(\DBC{K})$ is torsion and so the entire complex $\widehat{CF}(\DBC{K})$ carries the $\mathbb{Q}$-valued absolute grading $\tld{gr}$ defined by \OS in \cite{os:tri}.  Thus we can define a filtration grading $\red{\rho}$ on $\widehat{CF}(\DBC{K})$ via
\begin{equation*}
\red{\rho} = \red{R} - \tld{gr}
\end{equation*}

Let $P$ stand for either the ring of integers $\mathbb{Z}$ or for a field $\mathbb{F}$.  Now consider two distinguished filtered chain complexes $V$ and $W$ of free modules over $P$ defined by
$$ V^P_{*} := H_{*}\left(S^1; P\right) \quad \text{and} \quad W^P_{*}:= H_{*}\left( S^0; P \right)$$
Now promote $V$ and $W$ to $\mathbb{Z}$-filtered complexes by declaring each to supported in filtration level $0$.

One would hope to relate the reduced and unreduced theories.  Observe that the functions $(R+1/2)$ and $\red{R}$ also provide filtrations on the Heegaard Floer complexes; Proposition \ref{prop:RredR} is proved in Section \ref{sec:RredR}, and provides a correspondence between the reduced and unreduced complexes.  Note that we use $(R+1/2)$ rather than $R$ because $R$ is $(\mathbb{Z}+1/2)$-valued for knots.

\begin{prop}\label{prop:RredR}
Let $P$ either stand for $\mathbb{Z}$ or a field $\mathbb{F}$.  Let $b \in \B{2n}$ be a braid which induces a reducible fork diagram and whose closure is a diagram for the knot $K$.  Let $\red{\h}$ (respectively $\h$) be the Heegaard diagram for $\DBC{K}$ (respectively $\DBCs{K}$) provided by Proposition \ref{prop:DBCred} below (respectively Proposition 4.2.1 from \cite{et:R}).  Let $\mathfrak{s} \in \text{Spin}^{c}(\DBC{K})$ and let $\mathfrak{s}_{0} \in \text{Spin}^{c}(\SxS)$ denote the torsion element.  Equip $\widehat{CF}(\h, \mathfrak{s}\#\mathfrak{s}_{0}; P)$ with the $(R+1/2)$-filtration and equip $\widehat{CF}(\red{\h}, \mathfrak{s}; P)$ with the $\red{R}$-filtration.  Let $V^P$ and $W^P$ be the filtered complexes defined above.  Then the filtered complexes
\begin{equation*}
\widehat{CF}(\h, \mathfrak{s} \# \mathfrak{s}_{0}; P) \quad \text{and} \quad \widehat{CF}(\red{\h}, \mathfrak{s}; P) \otimes_{P} V^P
\end{equation*}
have the same filtered chain homotopy type.  Furthermore, equipping  $\widehat{CF}(\h, \mathfrak{s}\#\mathfrak{s}_{0};P)$ with the $\rho$-filtration and $\widehat{CF}(\red{\h}, \mathfrak{s};P)$ with the $\red{\rho}$-filtration,
\begin{equation*}
\widehat{CF}(\h, \mathfrak{s} \# \mathfrak{s}_{0};P) \quad \text{and} \quad \widehat{CF}(\red{\h}, \mathfrak{s};P) \otimes_{P} W^P
\end{equation*}
have the same filtered chain homotopy type.
\end{prop}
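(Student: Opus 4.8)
The plan is to deduce both statements from one structural fact: because $b$ induces a reducible fork diagram, the Heegaard diagram $\h$ of Proposition 22 from \cite{et:R} is Heegaard equivalent to the connected sum $\red{\h}\#\mathcal{E}$, where $\mathcal{E}$ is the standard genus--one pointed diagram for $\SxS$ (two isotopic nonseparating curves on $T^{2}$ meeting transversely in two points $\theta_{+},\theta_{-}$, with the basepoint placed so that $\widehat{CF}(\mathcal{E})$ has vanishing differential and is $\cong H_{\ast+1/2}(S^{1})$ as a graded group, with $\theta_{+},\theta_{-}$ in Maslov gradings $\pm\tfrac12$). Reducibility says exactly that deleting the omitted pair of fork arcs splits off such a factor, so that the Bigelow generators of $\h$ are the pairs $\bx\otimes\theta_{\pm}$ with $\bx\in\red{\G}$. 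Invoking Ozsv\'ath and Szab\'o's connected sum theorem for the stretched--neck diagram $\red{\h}\#\mathcal{E}$ — and using that in the hat flavor no positive disk crosses the connected--sum point, as it is the basepoint, while the only bigon in $\mathcal{E}$ joining $\theta_{+}$ to $\theta_{-}$ does cross it — one gets, on the nose,
\[
\widehat{CF}(\red{\h}\#\mathcal{E})\;\cong\;\widehat{CF}(\red{\h},\sfr)\otimes(\mathbb{Z}\theta_{+}\oplus\mathbb{Z}\theta_{-}),\qquad \del(\bx\otimes\theta_{\pm})=(\red{\del}\,\bx)\otimes\theta_{\pm}.
\]

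The next step is to compare the two gradings under this identification. Reinserting the omitted pair of arcs into a reduced fork diagram — in one of the two ways recorded by the choice of $\theta_{+}$ or $\theta_{-}$ — changes each of the auxiliary functions $Q,T,P$ and the Maslov grading $\tld{gr}$ by a quantity that depends only on that choice and not on the underlying reduced generator $\bx$. Combining these local contributions with the rational shifts $s_{R}(b,D)$ and $s_{\red{R}}(b,D)$ — this is the one genuinely computational point — I expect to obtain
\[
R(\bx\otimes\theta_{\pm})=\red{R}(\bx)\pm\tfrac12,\qquad \tld{gr}(\bx\otimes\theta_{\pm})=\tld{gr}(\bx)\pm\tfrac12.
\]
The first of these says precisely that the isomorphism above carries the $R$--filtration on the left to the filtration on $\widehat{CF}(\red{\h},\sfr)\otimes H_{\ast+1/2}(S^{1})$ obtained by adding to $\red{R}(\bx)$ the internal grading of the $H_{\ast+1/2}(S^{1})$ factor, which is the first claimed filtered identification. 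Since a sequence of Heegaard moves between $\h$ and $\red{\h}\#\mathcal{E}$ induces a filtered chain homotopy equivalence — the invariance argument of \cite{et:R}, which as noted in the excerpt applies equally to $R$ and to $\red{R}$ — this transports back to the diagram $\h$ named in the proposition.

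For the second statement, note that $\del$ preserves the $\theta_{\pm}$ label, so $\widehat{CF}(\h,\sfr\#\sO)$ is the direct sum of the two $\del$--invariant subgroups spanned respectively by the $\bx\otimes\theta_{+}$ and by the $\bx\otimes\theta_{-}$, each isomorphic as a complex to $\widehat{CF}(\red{\h},\sfr)$. On the $\theta_{\pm}$ summand the $\rho$--filtration level of $\bx\otimes\theta_{\pm}$ is
\[
\rho(\bx\otimes\theta_{\pm})=R(\bx\otimes\theta_{\pm})-\tld{gr}(\bx\otimes\theta_{\pm})=\bigl(\red{R}(\bx)\pm\tfrac12\bigr)-\bigl(\tld{gr}(\bx)\pm\tfrac12\bigr)=\red{\rho}(\bx),
\]
so the half--integer shifts cancel and each summand is literally $\bigl(\widehat{CF}(\red{\h},\sfr),\red{\rho}\bigr)$; hence $\bigl(\widehat{CF}(\h,\sfr\#\sO),\rho\bigr)\cong\widehat{CF}(\red{\h},\sfr)\oplus\widehat{CF}(\red{\h},\sfr)$ as filtered complexes, and invariance again upgrades this to the stated filtered chain homotopy type.

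The main obstacle is the middle step: pinning down the exact additive constants relating $R$ to $\red{R}$ and $\tld{gr}$ to its reduced counterpart, by carefully tracking how deleting the omitted pair of arcs affects the holomorphic volume form and the functions $Q,T,P$, and checking that the two rational shifts $s_{R}$ and $s_{\red{R}}$ conspire to leave exactly the $\pm\tfrac12$ discrepancy. A secondary point to handle carefully is that the connected sum identification of chain complexes must be arranged to respect the Bigelow--generator correspondence, so that it is filtration--preserving and not merely a filtered quasi--isomorphism; everything else is either Ozsv\'ath and Szab\'o's connected sum theorem or formal manipulation of filtered complexes.
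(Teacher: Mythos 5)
Your overall route---identify the unreduced diagram with $\red{\h}\#\mathcal{E}$, apply the connected-sum splitting, and check that the gradings shift by $\pm\tfrac12$---heads toward the same destination as the paper, but there is a genuine gap in the first step, and it sits exactly where you relegate the difficulty to a ``secondary point.'' It is not true that reducibility makes the Bigelow generators of the unreduced diagram split as pairs $\bx\otimes\theta_{\pm}$ with $\bx\in\red{\G}$. The unreduced diagram for $\DBCs{K}$ coming from $b$ is (up to isotopy) the diagram obtained from the reducible fork diagram of the split link $K\sqcup U$ by omitting the \emph{last} pair $\alpha_{n},\beta_{n}$; its generators are $(n-1)$-tuples which may contain a component in $\alpha_{n-1}\cap\beta_{j}$ together with one in $\alpha_{k}\cap\beta_{n-1}$ (with $j,k<n-1$), and such ``mixed'' generators do not respect any tensor decomposition. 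The connected-sum picture $\h\#\h_{0}$, with product generators and clean $\pm\tfrac12$ shifts, only appears when one instead omits the \emph{other} pair $\alpha_{n-1},\beta_{n-1}$.

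The paper therefore interposes Lemma \ref{lem:RredR}, asserting that the two choices of omitted pair yield $\red{R}$-filtered chain homotopy equivalent complexes, and proves it by exhibiting explicit triangle injections $g_{\ba}$ and $g_{\bb}$ associated to handleslides avoiding $\pm\infty$, checking case by case that $\red{R}$ is preserved (Case I for generators with a component in $\alpha_{n-1}\cap\beta_{n-1}$, Case II precisely for the mixed generators above). That lemma is the technical core of the proposition: without it, your ``Heegaard equivalence'' between $\h$ and $\red{\h}\#\mathcal{E}$ is only an equivalence of diagrams, not a filtration-compatible identification of complexes, because the handleslides relating them genuinely change the generating set. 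By contrast, the $\pm\tfrac12$ bookkeeping you single out as the main obstacle is the easy local part: once one is in the split diagram $\h\#\h_{0}$, the two points $y,z\in\ah_{n}\cap\bh_{n}$ satisfy $\red{R}(\bx y)=\red{R}(\bx)+\tfrac12$ and $\red{R}(\bx z)=\red{R}(\bx)-\tfrac12$, with the same shifts for $\tld{gr}$, exactly as you predict; your concluding cancellation in $\rho$ then matches the paper's. To repair the proposal, you need to prove the independence of the choice of omitted arc pair (or some equivalent statement) before invoking the connected-sum splitting.
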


One would like to use Proposition \ref{prop:RredR} in conjunction with Theorem 1.0.1 from \cite{et:R} (the invariance result for the unreduced theory) to obtain an invariance result for the reduced theory.  However, this requires recovering the filtered chain homotopy type of $\widehat{CF}(\red{\h}, \mathfrak{s}; P)$ from the filtered chain homotopy type of $\widehat{CF}(\red{\h}, \mathfrak{s}; P) \otimes_{P} V^P$ (and from that of $\widehat{CF}(\red{\h}, \mathfrak{s};P) \otimes_{P} W^P$).  The following fact indicates that this is possible when $P = \mathbb{F}$, a field.

\begin{prop}\label{prop:cancel}
Let $\mathbb{F}$ be a field, and let $C_1$ and $C_2$ be finite-dimensional $\mathbb{Z}$-graded, $\mathbb{Z}$-filtered chain complexes of vector spaces over $\mathbb{F}$, and let $X$ denote either one of the filtered complexes $V^{\mathbb{F}}$ or $W^{\mathbb{F}}$ defined above.  If $C_1 \otimes_{\mathbb{F}} X$ is filtered chain homotopy equivalent to $C_2 \otimes_{\mathbb{F}} X$, then $C_1$ is filtered chain homotopy equivalent to $C_2$.
\end{prop}

Along with Theorem 1.0.1 from \cite{et:R} and Proposition \ref{prop:RredR}, Proposition \ref{prop:cancel} implies the following:

\begin{thm}\label{thm:redRthm}
Fix $F$ to either stand for $\red{R}$ or $\red{\rho}$, and let $\mathbb{F}$ be a field.  Let the braids $b \in B_{2n}$ and $b' \in B_{2m}$ have plat closures which are both diagrams for the knot $K$, and assume that both induce reducible fork diagrams.  Let $\h$ and $\h'$ be the pointed Heegaard diagrams for $\DBC{K}$ induced by $b$ and $b'$,respectively, in the sense of Proposition \ref{prop:DBCred} below.  Then the $F$-filtered chain complexes
\begin{equation*}
\widehat{CF}(\h; \mathbb{F}) \quad \text{and} \quad \widehat{CF}(\h'; \mathbb{F})
\end{equation*}
have the same filtered chain homotopy type.
\end{thm}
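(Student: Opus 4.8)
The plan is to deduce this from the invariance of the \emph{unreduced} filtered theory. In outline: use Proposition~\ref{prop:RredR} to express each (doubled) reduced $\red{\rho}$-filtered complex as the unreduced $\rho$-filtered complex, apply Theorem~1 of \cite{et:R} to see that the two unreduced complexes agree up to filtered chain homotopy, and then cancel the doubling by invoking the classification of finitely generated filtered complexes over $\mathbb{F}_{2}$.

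First I would fix notation. Let $\h_{0}$ and $\h_{0}'$ be the \emph{unreduced} pointed Heegaard diagrams for $\DBCs{K}$ induced by $b$ and $b'$ (Proposition~22 of \cite{et:R}), so that the diagrams $\h$ and $\h'$ of the statement are their reductions in the sense of Proposition~\ref{prop:DBCred}. Fix $\mathfrak{s}\in\text{Spin}^{c}(\DBC{K})$, let $\mathfrak{s}_{0}$ be the torsion class on $\SxS$, and equip everything with the filtrations as in Proposition~\ref{prop:RredR}. By Theorem~1 of \cite{et:R} the $\rho$-filtered complexes $\widehat{CF}(\h_{0},\mathfrak{s}\#\mathfrak{s}_{0})$ and $\widehat{CF}(\h_{0}',\mathfrak{s}\#\mathfrak{s}_{0})$ have the same filtered chain homotopy type. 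Applying the second part of Proposition~\ref{prop:RredR} to $b$ and then to $b'$ identifies these, up to filtered chain homotopy equivalence, with $\widehat{CF}(\h,\mathfrak{s})\oplus\widehat{CF}(\h,\mathfrak{s})$ and $\widehat{CF}(\h',\mathfrak{s})\oplus\widehat{CF}(\h',\mathfrak{s})$ respectively (each summand carrying the $\red{\rho}$-filtration). Hence those two doubled complexes are filtered chain homotopy equivalent.

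The last step is a cancellation argument, and this is where I expect the only real work to lie. I would invoke the structure theorem for a finitely generated chain complex over $\mathbb{F}_{2}$ with a filtration valued in a discrete subset of $\mathbb{Q}$: every such complex is filtered chain homotopy equivalent to a direct sum of one-generator pieces (zero differential, at a prescribed filtration level) and two-generator pieces (a generator mapping isomorphically onto a second generator sitting at a lower or equal filtration level), and the resulting multiset of pieces --- its barcode --- is a complete invariant of the filtered chain homotopy type; equivalently, this homotopy category is Krull--Schmidt. The barcode is plainly additive under $\oplus$, so the barcode of $A\oplus A$ is that of $A$ with every multiplicity doubled. Thus, since $\widehat{CF}(\h,\mathfrak{s})\oplus\widehat{CF}(\h,\mathfrak{s})$ and $\widehat{CF}(\h',\mathfrak{s})\oplus\widehat{CF}(\h',\mathfrak{s})$ have equal barcodes, so do $\widehat{CF}(\h,\mathfrak{s})$ and $\widehat{CF}(\h',\mathfrak{s})$, whence they are filtered chain homotopy equivalent. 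Finally, because $\DBC{K}$ is a rational homology sphere, $\text{Spin}^{c}(\DBC{K})$ is a finite set canonically attached to $K$ and the $\SxS$ factor contributes only the torsion class; summing the equivalences just obtained over $\mathfrak{s}\in\text{Spin}^{c}(\DBC{K})$ gives the asserted filtered chain homotopy equivalence of $\widehat{CF}(\h)$ and $\widehat{CF}(\h')$.

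The main obstacle, then, is having the barcode classification available in exactly the form needed --- both that it is complete and that it is additive under direct sums --- so that cancellation of the doubling is legitimate; everything else is formal given Proposition~\ref{prop:RredR} and Theorem~1 of \cite{et:R}. A lesser point is to confirm that the filtered homotopy equivalences produced by those two results respect (or can be arranged to respect, or be summed over) the $\text{Spin}^{c}$ decomposition of the chain complexes, which is routine here since the differential preserves $\text{Spin}^{c}$ and all the relevant $\text{Spin}^{c}$ structures are torsion.
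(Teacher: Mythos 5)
Your proposal follows essentially the same route as the paper: the published proof is a two-line deduction from Proposition~\ref{prop:RredR} together with Theorem~1 of \cite{et:R}, exactly as you describe. The one place you go beyond the paper is the cancellation of the doubling --- the paper asserts the result ``follows'' from $A\oplus A\simeq B\oplus B$ without further comment, whereas you supply the barcode/Krull--Schmidt argument (valid over field coefficients) that justifies passing from this to $A\simeq B$; that step is correct and fills a gap the paper leaves implicit rather than constituting a different approach.
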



\begin{rmk}\label{rmk:Rinv}
Theorem 1.0.1 of \cite{et:R} was in fact only stated with respect to the filtration $\rho$.  However, it is implicit in the proof of that result that an analogous invariance statement holds for the filtration $R$ as well.
\end{rmk}

The filtration $\red{\rho}$ induces a reduced version of the spectral sequence (over $\mathbb{F}$) with pages $\red{E}^{k}$, and Theorem \ref{thm:redRthm} implies the following.

\begin{cor}\label{cor:redRss}
For $k \geq 1$, the page $\red{E}^{k}$ is a knot invariant.
\end{cor}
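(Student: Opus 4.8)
The plan is to obtain Corollary~\ref{cor:redRss} as a formal consequence of Corollary~\ref{cor:redRcor}, using the standard principle that a filtered chain homotopy equivalence induces an isomorphism of the associated spectral sequences from the $E_{1}$-page onward.

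First I would check that the algebraic setup is the well-behaved one. Since $\DBC{K}$ is a rational homology sphere, $\CF{K}$ is a finitely generated chain complex over $\mathbb{F}_2$, and $\red{\rho} = \red{R} - \tld{gr}$ takes only finitely many values on its generators; hence the $\red{\rho}$-filtration is bounded. A bounded filtration on a chain complex produces a spectral sequence $\{\red{E}_{k}, \red{d}_{k}\}$ that converges to $\widehat{HF}(\DBC{K})$, and for such bounded filtered complexes the following holds: any filtered chain homotopy equivalence $f \colon C \to C'$ induces maps $\red{E}_{k}(f) \colon \red{E}_{k}(C) \to \red{E}_{k}(C')$ that commute with the differentials $\red{d}_{k}$ and are isomorphisms for every $k \geq 1$. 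This is a standard fact from the homological algebra of filtered complexes; concretely, one checks that a filtered homotopy between $f g$ (resp.\ $g f$) and the identity descends to give a homotopy on each page $\red{E}_{k}$, $k \geq 1$.

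With this in place the corollary is immediate. Let $b \in B_{2n}$ and $b' \in B_{2m}$ be braids whose plat closures are isotopic diagrams of $K$, each inducing a reducible fork diagram, and let $\h, \h'$ be the corresponding pointed Heegaard diagrams for $\DBC{K}$. By Theorem~\ref{thm:redRthm} (equivalently, by Corollary~\ref{cor:redRcor}) the $\red{\rho}$-filtered complexes $\widehat{CF}(\h)$ and $\widehat{CF}(\h')$ have the same filtered chain homotopy type, so by the preceding paragraph $\red{E}_{k}(\h) \cong \red{E}_{k}(\h')$ as bigraded groups, compatibly with the differentials, for all $k \geq 1$. Since any two admissible diagrams of $K$ of this type are related in this way, $\red{E}_{k}$ is an invariant of $K$ for $k \geq 1$.

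The only point requiring any attention — rather than a genuine obstacle — is to make sure that the "same filtered chain homotopy type" produced by Corollary~\ref{cor:redRcor} is exactly the hypothesis the spectral-sequence invariance lemma needs (a pair of filtered chain maps that are mutual inverses up to filtered chain homotopy), and that the filtration is bounded so that no convergence subtlety arises. Both follow directly from the rational-homology-sphere hypothesis and the construction of $\red{\rho}$ above. One should also recall from the background in \cite{et:R} and \cite{cm:R} that every knot $K$ is the plat closure of some braid inducing a reducible fork diagram, so that $\red{E}_{k}$ is in fact defined for all knots.
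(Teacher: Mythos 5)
Your proposal is correct and follows exactly the paper's route: Corollary~\ref{cor:redRss} is stated there as an immediate consequence of Corollary~\ref{cor:redRcor}, via the standard fact that a filtered chain homotopy equivalence of (bounded) filtered complexes induces isomorphisms on all pages $\red{E}_{k}$ for $k \geq 1$. Your additional remarks on boundedness of the filtration and the existence of reducible fork diagrams are consistent with the paper's setup and require no changes.
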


One therefore obtains the following relationship between the pages of the spectral sequences induced by $\rho$ and $\red{\rho}$, indicating that the reduced spectral sequence determines the unreduced one:
\begin{cor}\label{cor:ss}
For $k \geq 1$, $E^{k} \cong \red{E}^{k} \oplus \red{E}^{k}$ as $\mathbb{Z}^2$-graded $\mathbb{F}$-vector spaces.
\end{cor}

In Section \ref{sec:sum}, we show that the reduced theory enjoys a K\"unneth-type theorem with respect to connected sums of knots; this result provides a computational tool for composite knots.

\begin{thm}\label{thm:sum}
Fix $F$ to either stand for $\red{R}$ or $\red{\rho}$, and let $\mathbb{F}$ be a field.  Let $K_{1}, K_{2} \subset S^{3}$ be knots, let $\mathfrak{s}_{i} \in \text{Spin}^{c}(\DBC{K_{i}}), i= 1, 2$.  Then the filtered chain complexes
\begin{equation*}
\widehat{CF}(\DBC{K_{1} \# K_{2}}, \mathfrak{s_{1}} \# \mathfrak{s_{2}}; \mathbb{F}) \quad \text{and} \quad 
	\widehat{CF}(\DBC{K_{1}}, \mathfrak{s_{1}}; \mathbb{F})
	\otimes_{\mathbb{F}}
	\widehat{CF}(\DBC{K_{2}}, \mathfrak{s_{2}};\mathbb{F})
\end{equation*}
have the same filtered chain homotopy type, where $\widehat{CF}(\DBC{K_{1} \# K_{2}}, \mathfrak{s_{1}} \# \mathfrak{s_{2}};\mathbb{F})$ is equipped with the $F$-filtration and $\widehat{CF}(\DBC{K_{1}}, \mathfrak{s_{1}};\mathbb{F}) \otimes_{\mathbb{F}} \widehat{CF}(\DBC{K_{2}}, \mathfrak{s_{2}};\mathbb{F})$ is equipped with the tensor product filtration induced by the $F$-filtrations on the factors.
\end{thm}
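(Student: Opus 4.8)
\emph{Proof strategy.} The plan is to realize $K_{1}\#K_{2}$ by an explicit braid and then invoke the Ozsv\'ath--Szab\'o connected sum formula in a filtered form. First I would choose braids $b_{1}\in B_{2n_{1}}$ and $b_{2}\in B_{2n_{2}}$ whose plat closures are diagrams for $K_{1}$ and $K_{2}$ and which induce reducible fork diagrams, and form the plat connected sum braid $b\in B_{2(n_{1}+n_{2}-1)}$ by the standard operation of stacking (inclusions of) $b_{1}$ over $b_{2}$ and joining one pair of strands across the two factors rather than capping it off. One checks directly that the plat closure of $b$ is a diagram $D$ for $K_{1}\#K_{2}$, that the joining region can be kept disjoint from the reducing arcs and the forks of both factors so that the induced fork diagram is again reducible, and that $e(b)=e(b_{1})+e(b_{2})$ and $w(D)=w(D_{1})+w(D_{2})$. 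By Theorem~\ref{thm:redRthm} (equivalently Corollary~\ref{cor:redRcor}) the $\red{\rho}$-filtered chain homotopy type of $\widehat{CF}(\DBC{K_{1}\#K_{2}},\mathfrak{s}_{1}\#\mathfrak{s}_{2})$ is then computed from the pointed Heegaard diagram $\red{\h}$ induced by $b$, and likewise $\widehat{CF}(\DBC{K_{i}},\mathfrak{s}_{i})$ from the diagram $\red{\h}_{i}$ induced by $b_{i}$.

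Next I would identify $\red{\h}$, up to an isotopy supported near the joining region, with the Heegaard Floer connected sum of $\red{\h}_{1}$ and $\red{\h}_{2}$: its Heegaard surface is the connected sum of those of $\red{\h}_{1}$ and $\red{\h}_{2}$, reflecting $\DBC{K_{1}\#K_{2}}=\DBC{K_{1}}\#\DBC{K_{2}}$; its attaching curves are the union of those of the two factors; and the basepoint $z$ lies in the connected-sum tube. Consequently $\red{\G}(b)\cong\red{\G}(b_{1})\times\red{\G}(b_{2})$, writing a generator as $\bx_{1}\times\bx_{2}$, and $\sz{\bx_{1}\times\bx_{2}}$ is identified with $\sz{\bx_{1}}\#\sz{\bx_{2}}$, so the $\mathrm{Spin}^{c}$ bookkeeping is automatic. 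I would then verify that the filtration data is additive across this product. The functions $\red{Q},\red{T},\red{P}$ are defined locally on the fork subdiagrams, so $\red{\tld{R}}=\red{T}-\red{Q}+\red{P}$ satisfies $\red{\tld{R}}(\bx_{1}\times\bx_{2})=\red{\tld{R}}(\bx_{1})+\red{\tld{R}}(\bx_{2})$; the rational shift is additive since $s_{\red{R}}(b,D)=\tfrac{e(b)-w(D)-2(n_{1}+n_{2}-2)}{4}=s_{\red{R}}(b_{1},D_{1})+s_{\red{R}}(b_{2},D_{2})$ by the arithmetic above; hence $\red{R}(\bx_{1}\times\bx_{2})=\red{R}(\bx_{1})+\red{R}(\bx_{2})$. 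Since absolute Maslov gradings add under connected sum, $\tld{gr}(\bx_{1}\times\bx_{2})=\tld{gr}(\bx_{1})+\tld{gr}(\bx_{2})$, and therefore $\red{\rho}(\bx_{1}\times\bx_{2})=\red{\rho}(\bx_{1})+\red{\rho}(\bx_{2})$. In other words, the $\red{\rho}$-filtration on $\widehat{CF}(\red{\h})$ coincides exactly with the tensor product filtration induced by the $\red{\rho}$-filtrations on the factors.

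Finally I would run the connected sum argument for $\widehat{HF}$: with $z$ in the connected-sum tube and the neck sufficiently stretched, every holomorphic disk counted by the differential on $\widehat{CF}(\red{\h})$ degenerates into a product of a disk in one factor with a constant in the other, so the differential is literally $\delh_{1}\otimes\mathrm{id}+\mathrm{id}\otimes\delh_{2}$. Together with the previous paragraph, this exhibits $\widehat{CF}(\red{\h})$, as a filtered complex, as equal to $\widehat{CF}(\red{\h}_{1})\otimes\widehat{CF}(\red{\h}_{2})$ with the tensor product filtration. Composing this equality with the filtered chain homotopy equivalences $\widehat{CF}(\red{\h}_{i})\simeq\widehat{CF}(\DBC{K_{i}},\mathfrak{s}_{i})$ and $\widehat{CF}(\red{\h})\simeq\widehat{CF}(\DBC{K_{1}\#K_{2}},\mathfrak{s}_{1}\#\mathfrak{s}_{2})$ furnished by Corollary~\ref{cor:redRcor} yields the theorem.

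The step I expect to be the main obstacle is the geometric identification in the middle paragraph: verifying that the fork-diagram-to-Heegaard-diagram procedure of \cite{cm:R} and \cite{et:R} genuinely sends the plat connected sum of $b_{1}$ and $b_{2}$ to a connected sum Heegaard diagram (after a controlled isotopy), and --- relatedly --- that the reduced grading $\red{\tld{R}}$, extracted from a holomorphic volume form on a configuration space, really splits as the sum of the two factors' contributions. This requires a careful choice of the joining arcs relative to the reducing arcs and the forks, together with a local analysis of the volume form near the connected-sum region; granting those local statements, the remainder is the standard neck-stretching degeneration and the elementary arithmetic of $s_{\red{R}}$.
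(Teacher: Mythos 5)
Your overall architecture matches the paper's: realize $K_{1}\# K_{2}$ as the plat closure of a braid built from $b_{1}$ and $b_{2}$ joined by a single crossing, compare the resulting Heegaard diagram with the connected sum $\h_{1}\#\h_{2}$, check additivity of the filtration data on generators, and then quote the Ozsv\'ath--Szab\'o K\"unneth theorem together with the invariance result. The arithmetic for the shift, $s_{\red{R}}(b,D)=s_{\red{R}}(b_{1},D_{1})+s_{\red{R}}(b_{2},D_{2})$, is exactly as in the paper.

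The genuine gap is at the step you yourself flag as the ``main obstacle,'' and your proposed resolution of it is not available. The Heegaard diagram $\h$ induced by the connected-sum braid is \emph{not} isotopic to $\h_{1}\#\h_{2}$, even after an isotopy supported near the joining region: the extra $\sigma_{1}^{-1}$ used to splice the two plats makes the attaching curves of $\h$ wind through both factors, and the paper passes from the attaching sets $\{\ba^{\#},\bb^{\#}\}$ of $\h_{1}\#\h_{2}$ to $\{\ba,\bb\}$ by a sequence of \emph{handleslides} avoiding $\pm\infty$. Consequently the generators do not literally split as products $\bx_{1}\times\bx_{2}$; one must construct triangle injections $g_{\ba}\colon\tor{\ba^{\#}}\cap\tor{\bb^{\#}}\to\Ta\cap\tor{\bb^{\#}}$ and $g_{\bb}\colon\Ta\cap\tor{\bb^{\#}}\to\Ta\cap\Tb$ from explicit local $3$-gon domains and then verify that $g_{\bb}\circ g_{\ba}$ preserves $\red{R}$. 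Relatedly, your assertion that $\red{\tld{R}}=\red{T}-\red{Q}+\red{P}$ is ``defined locally on the fork subdiagrams'' and hence automatically additive is too quick: $\red{Q}$ and $\red{P}$ are winding-type quantities measured against all punctures and $\red{T}$ counts interactions between pairs of coordinates, so the identity $\red{\tld{R}}(\bx\bw y v_{i})=\red{\tld{R}}(\bx y)+\red{\tld{R}}(\bw z_{i})$ is a computation carried out on the specific diagrams after the handleslides, not a formal consequence of locality. Everything you defer under ``granting those local statements'' is precisely the substantive content of the proof; as written, the argument establishes the easy bookkeeping but not the theorem.
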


Theorem \ref{thm:sum} implies the following fact regarding the reduced spectral sequence:

\begin{cor}\label{cor:sssum}
For $k \geq 1$, $\red{E}^{k}(K_{1} \# K_{2}) \cong \red{E}^{k}(K_{1}) \otimes_{\mathbb{F}}\red{E}^{k}(K_{2})$.
\end{cor}

Given a knot $K$ and some $\mathfrak{s} \in \text{Spin}^{c}(\DBC{K})$, we denote by $\widehat{CF}^{*}(\DBC{K},\mathfrak{s})$ the dual complex to $\widehat{CF}_{*}(\DBC{K},\mathfrak{s})$.  Then $\widehat{CF}^{-*}(\DBC{K}, \mathfrak{s})$ is a chain complex, and we define a filtration $\red{\rho}^{*}$ via
\begin{equation*}
\red{\rho}^{*}(x^{*}) = -\red{\rho}(x) \quad \text{for each } x\in \Ta\cap \Tb.
\end{equation*}
\begin{thm}\label{thm:mirror}
Fix $F$ to either stand for $\red{R}$ or $\red{\rho}$, and let $\mathbb{F}$ be a field.  Let $-K$ denote the mirror image of the knot $K\subset S^{3}$, and let $\mathfrak{s} \in \text{Spin}^{c}(\DBC{K})$.  Then
\begin{equation*}
\widehat{CF}_{*}(\DBC{-K},\mathfrak{s}; \mathbb{F}) \quad \text{and} \quad \widehat{CF}^{-*}(\DBC{K},\mathfrak{s}; \mathbb{F})
\end{equation*}
are filtered chain isomorphic, where the complexes carry the filtrations $\red{\rho}$ and $\red{\rho}^{*}$, respectively.
\end{thm}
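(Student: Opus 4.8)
The plan is to realize both double branched covers by Heegaard diagrams coming from reducible fork diagrams that differ by an orientation-reversing symmetry, and then to read off the mirror relation at the chain level. By Theorem~\ref{thm:redRthm} (equivalently Corollary~\ref{cor:redRcor}) the $\red{\rho}$-filtered chain homotopy type is computed by any braid presentation inducing a reducible fork diagram, so fix $b\in\B{2n}$ whose plat closure is a diagram $D$ for $K$ and which induces a reducible fork diagram with associated pointed Heegaard diagram $\red{\h}_{K}=(\Sigma,\ba,\bb,z)$. Let $\bar{b}\in\B{2n}$ be obtained from $b$ by replacing every generator $\sigma_{i}$ with $\sigma_{i}^{-1}$; its plat closure is a diagram $\bar{D}$ for $-K$. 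One checks that $\bar{b}$ again induces a reducible fork diagram --- this property is symmetric under $\sigma_{i}\mapsto\sigma_{i}^{-1}$, and if one prefers, a reducible braid for $-K$ may be produced independently and compared via Corollary~\ref{cor:redRcor} --- and that the resulting reflected fork diagram has underlying pointed Heegaard diagram $\red{\h}_{-K}=(\Sigma,\bb,\ba,z)$, obtained from $\red{\h}_{K}$ by interchanging the two tori. Since $\DBC{-K}\cong-\DBC{K}$, this is the familiar statement that swapping $\ba$ and $\bb$ turns a diagram for a three-manifold $Y$ into one for $-Y$.

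Next I would record the chain-level duality carried by this swap. The generators of $\widehat{CF}(\red{\h}_{-K},\mathfrak{s})$ and of $\widehat{CF}(\red{\h}_{K},\mathfrak{s})$ are in both cases the points of $\Ta\cap\Tb$, and reflection of Whitney disks gives a canonical identification of $\pi_{2}(\bx,\by)$ computed in $(\bb,\ba)$ with $\pi_{2}(\by,\bx)$ computed in $(\ba,\bb)$ that matches the moduli spaces and preserves $n_{z}$. Hence $\del_{(\bb,\ba)}$ is the transpose of $\del_{(\ba,\bb)}$, so $\bx\mapsto\bx^{*}$ defines a chain isomorphism $\widehat{CF}(\red{\h}_{-K},\mathfrak{s})\xrightarrow{\ \cong\ }\widehat{CF}^{-*}(\red{\h}_{K},\mathfrak{s})$. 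Under the same identification the Maslov grading negates, $\tld{gr}_{-K}(\bx)=-\tld{gr}_{K}(\bx)$, which is the chain-level shadow of $d(-Y)=-d(Y)$ for the rational homology sphere $\DBC{K}$ (valid since every $\mathfrak{s}\in\text{Spin}^{c}(\DBC{K})$ is torsion); I would cite the corresponding Ozsv\'ath--Szab\'o duality here.

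It then remains to check that the $\red{R}$-part of the filtration transforms in the same manner, that is, $\red{R}_{\bar{b}}(\bx)=-\red{R}_{b}(\bx)$ for corresponding generators. Granting this, $\red{\rho}_{-K}(\bx)=\red{R}_{\bar{b}}(\bx)-\tld{gr}_{-K}(\bx)=-\red{R}_{b}(\bx)+\tld{gr}_{K}(\bx)=-\red{\rho}_{K}(\bx)=\red{\rho}^{*}(\bx^{*})$, which is exactly the compatibility the theorem asserts, and combined with the previous paragraph completes the proof. Writing $\red{R}=\red{T}-\red{Q}+\red{P}+s_{\red{R}}(b,D)$, I expect that under reflection of the fork diagram the roles of $\red{T}$ and $\red{Q}$ are exchanged while $\red{P}$ changes sign, each up to an additive constant depending only on $n$, so that $\bigl(\red{T}-\red{Q}+\red{P}\bigr)_{\bar{b}}(\bx)=-\bigl(\red{T}-\red{Q}+\red{P}\bigr)_{b}(\bx)+(n-1)$; since $w(\bar{D})=-w(D)$ and $e(\bar{b})=-e(b)$ give $s_{\red{R}}(\bar{b},\bar{D})=\tfrac{-e(b)+w(D)-2(n-1)}{4}$, the constant $n-1$ is precisely what is needed for $s_{\red{R}}(\bar{b},\bar{D})$ to reconcile with $-s_{\red{R}}(b,D)$, yielding $\red{R}_{\bar{b}}(\bx)=-\red{R}_{b}(\bx)$.

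The main obstacle is this last point. One must return to the definitions of the reduced combinatorial functions $\red{Q},\red{T},\red{P}$ and of the reduced holomorphic volume form that produces $\red{\tld{R}}$, and verify term by term how each behaves when the fork diagram is reflected --- in particular pinning down the additive constant and confirming that it cancels against the constant built into the grading shift $s_{\red{R}}$. The remaining ingredients --- the interchange of tori, the reflection identification on holomorphic disks, and the negation of $\tld{gr}$ --- are standard and should be dispatched quickly.
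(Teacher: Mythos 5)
Your proposal is correct and follows essentially the same route as the paper: pass to a mirror braid (the paper uses $-b = \sigma_{2n-i_{1}}^{-k_{1}}\cdots\sigma_{2n-i_{m}}^{-k_{m}}$, which differs from your $\bar{b}$ only by a left--right reflection that does not change the plat closure type), identify generators under the induced correspondence of Heegaard diagrams, observe that the differential dualizes and $\tld{gr}$ negates, and reduce everything to showing $\red{R}$ negates. The step you flag as the main obstacle --- the term-by-term verification that $\red{T}-\red{Q}+\red{P}$ reverses sign up to the constant absorbed by $s_{\red{R}}$ --- is precisely the point the paper also does not spell out, deferring it to an argument analogous to Lemma 33 of \cite{et:R}.
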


Section \ref{sec:r} will discuss how one can distill a family of knot invariants from the reduced filtration in the form of a function $r_{K}: \text{Spin}^{c}(\DBC{K}; \mathbb{F}) \rightarrow \mathbb{Q}$.  Restricting $r_{K}$ to the unique Spin structure $\mathfrak{s}_{0}$ on $\DBC{K}$, one obtains the $\mathbb{Q}$-valued knot invariant $r(K) = r_{K}(\mathfrak{s}_{0})$.  Theorems \ref{thm:sum} and \ref{thm:mirror} imply the following:

\begin{cor}\label{cor:sum}
Let $K_{1},K_{2} \subset S^{3}$ be knots, and let $\mathfrak{s}_{i} \in \text{Spin}^{c}(\DBC{K_{i}})$ for $i = 1,2.$  Then
$$r_{K_{1} \# K_{2}}(\mathfrak{s}_{1} \# \mathfrak{s}_{2})  = r_{K_{1}}(\mathfrak{s}_{1}) + r_{K_{2}}(\mathfrak{s}_{2}) \quad
\text{and} \quad r(K_{1} \# K_{2}) = r(K_{1}) + r(K_{2}).$$
\end{cor}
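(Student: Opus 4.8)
Recall from Section~\ref{sec:r} that $r_{K}(\mathfrak{s})$ is extracted from the $\red{\rho}$-filtered complex $\widehat{CF}(\DBC{K},\mathfrak{s})$ as an extremal $\red{\rho}$-filtration level of its surviving homology --- concretely, via the reduced spectral sequence, as the largest $\ell$ for which $(\red{E}_{\infty})_{\ell}$ is nonzero (the duality $\red{\rho}^{*}=-\red{\rho}$ together with Theorem~\ref{thm:mirror} being what single out this choice rather than its opposite). In particular $r_{K}(\mathfrak{s})$ depends only on the $\red{\rho}$-filtered chain homotopy type of $\widehat{CF}(\DBC{K},\mathfrak{s})$, which is precisely why Corollary~\ref{cor:redRcor} makes it a knot invariant. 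The plan is to deduce the first displayed identity from a purely algebraic fact about the behavior of this extremal level under tensor products of filtered complexes, and then to obtain the second identity by identifying the relevant $\mathrm{Spin}$ structures.

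First I would invoke Theorem~\ref{thm:sum}: the $\red{\rho}$-filtered complex $\widehat{CF}(\DBC{K_{1}\#K_{2}},\mathfrak{s}_{1}\#\mathfrak{s}_{2})$ is filtered chain homotopy equivalent to the tensor product $\widehat{CF}(\DBC{K_{1}},\mathfrak{s}_{1})\otimes\widehat{CF}(\DBC{K_{2}},\mathfrak{s}_{2})$ equipped with the tensor product filtration, so the two have the same reduced spectral sequence, and in particular the normalization built into $\red{\rho}$ is already matched up correctly by Theorem~\ref{thm:sum}. Passing to coefficients in $\mathbb{Q}$ --- a field, over which the value of $r$ is unchanged and over which no $\mathrm{Tor}$ terms intervene in the relevant K\"unneth formula --- the tensor product filtration is generated by the pieces with $i+j$ fixed, so every page, and in particular $\red{E}_{\infty}$, splits as a tensor product: this is Corollary~\ref{cor:sssum}, giving $(\red{E}_{\infty})_{\ell}(K_{1}\#K_{2})\cong\bigoplus_{i+j=\ell}(\red{E}_{\infty})_{i}(K_{1})\otimes(\red{E}_{\infty})_{j}(K_{2})$. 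Taking the largest $\ell$ with nonvanishing right-hand side then yields $r_{K_{1}\#K_{2}}(\mathfrak{s}_{1}\#\mathfrak{s}_{2})=r_{K_{1}}(\mathfrak{s}_{1})+r_{K_{2}}(\mathfrak{s}_{2})$, and the integral statement follows from the field case by universal coefficients.

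For the second identity, I would use that $\DBC{K_{1}\#K_{2}}$ is diffeomorphic to $\DBC{K_{1}}\#\DBC{K_{2}}$ and that each $\DBC{K_{i}}$ is a rational homology sphere with $|H_{1}(\DBC{K_{i}})|=|\det K_{i}|$ odd, hence carries a \emph{unique} $\mathrm{Spin}$ structure. A $\mathrm{Spin}$ structure on the connected sum restricts to $\mathrm{Spin}$ structures on the two summands, which here are forced to be $\mathfrak{s}_{0}$, so the $\mathrm{Spin}$ structure on $\DBC{K_{1}\#K_{2}}$ is $\mathfrak{s}_{0}\#\mathfrak{s}_{0}$; specializing the first identity to this case gives $r(K_{1}\#K_{2})=r(K_{1})+r(K_{2})$.

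The main obstacle I anticipate is the careful bookkeeping needed to see that the induced $\red{\rho}$-filtration on $\widehat{HF}$ of the connected sum --- equivalently, the internal grading on $\red{E}_{\infty}$ --- is genuinely the tensor product of the induced filtrations, with no loss of the top level and no degeneration failure, and that the extremal-level functional defining $r$ is transported faithfully across the filtered chain homotopy equivalence of Theorem~\ref{thm:sum}. Over $\mathbb{Q}$ this reduces to the multiplicativity of the spectral sequence of a tensor product, so the real work lies in reconciling the precise Section~\ref{sec:r} definition of $r$ --- including the normalization, whose compatibility with mirroring is what Theorem~\ref{thm:mirror} provides --- with this algebraic description.
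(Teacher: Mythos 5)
Your overall route---deduce the identity from Theorem \ref{thm:sum} (equivalently Corollary \ref{cor:sssum}) by tracking how the extremal levels of the induced filtration on homology behave under tensor product---is exactly the intended one; the paper itself offers nothing beyond ``Theorems \ref{thm:sum} and \ref{thm:mirror} imply the following.'' However, you have misremembered the definition of $r_{K}$, and as a result the quantity you actually prove additive is not $r_{K}$. Section \ref{sec:r} defines three functions: $r^{min}_{K}(\mathfrak{s})$ is the least $r$ with $\iota^{r}_{K,\mathfrak{s}} \neq 0$, $r^{max}_{K}(\mathfrak{s})$ is the least $r$ with $\iota^{r}_{K,\mathfrak{s}}$ surjective, and $r_{K}(\mathfrak{s})$ is the \emph{average} $\frac{1}{2}\left(r^{max}_{K}(\mathfrak{s}) + r^{min}_{K}(\mathfrak{s})\right)$. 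In spectral-sequence terms $r^{max}_{K}$ is the top of the support of $\red{E}_{\infty}$ and $r^{min}_{K}$ is the bottom; your ``largest $\ell$ with $(\red{E}_{\infty})_{\ell} \neq 0$'' captures only $r^{max}_{K}$. (Your parenthetical appeal to Theorem \ref{thm:mirror} to ``single out'' one extremal level is a symptom of this confusion: Theorem \ref{thm:mirror} plays no role in Corollary \ref{cor:sum} and is instead what yields Corollary \ref{cor:mirror}.)

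The repair is immediate and uses nothing beyond what you already invoke: over a field, Corollary \ref{cor:sssum} gives $(\red{E}_{\infty})_{\ell}(K_{1}\#K_{2}) \cong \bigoplus_{i+j=\ell}(\red{E}_{\infty})_{i}(K_{1}) \otimes (\red{E}_{\infty})_{j}(K_{2})$, and the support of a tensor product of graded vector spaces has top equal to the sum of the tops \emph{and} bottom equal to the sum of the bottoms. Hence $r^{max}$ and $r^{min}$ are each additive, and therefore so is their average $r_{K}$. Your identification of $\mathfrak{s}_{0}\#\mathfrak{s}_{0}$ as the unique Spin structure on $\DBC{K_{1}\#K_{2}}$ is correct and gives the second identity. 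One residual caveat: the passage between $\mathbb{Z}$ and field coefficients deserves more care than an appeal to ``universal coefficients,'' since a homology class that is torsion could in principle shift $r^{min}$ when coefficients change; the paper is silent on this point, so I will not press it, but ``the value of $r$ is unchanged'' is an assertion, not an argument.
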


\begin{cor}\label{cor:mirror}
Let $K\subset S^{3}$ be a knot and let $\mathfrak{s} \in \text{Spin}^{c}(\DBC{K})$.  Then
$$r_{-K}(\mathfrak{s})  = - r_{K}(\mathfrak{s}) \quad
\text{and} \quad r(-K) = -r(K).$$
\end{cor}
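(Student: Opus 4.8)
The plan is to obtain Corollary~\ref{cor:mirror} as a formal consequence of Theorem~\ref{thm:mirror}, after making explicit two facts about the invariant $r_{K}$: first, that $r_{K}(\mathfrak{s})$ depends only on the $\red{\rho}$-filtered chain homotopy type of $\widehat{CF}(\DBC{K},\mathfrak{s})$ (this is exactly the content of Corollary~\ref{cor:redRcor} together with whatever extraction recipe is fixed in Section~\ref{sec:r}); and second — the real content — that, whatever that recipe is (e.g.\ the $\red{\rho}$-filtration level of a distinguished surviving ``tower'' class in $\widehat{HF}(\DBC{K},\mathfrak{s})$, in the spirit of the $d$-invariant, or a symmetric extremal combination of the $\red{\rho}$-support), it reverses sign when a filtered complex is replaced by its filtered dual with the filtration negated. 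I would isolate the second fact as a lemma about filtered complexes.

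Concretely, the duality lemma would read: if $(C_{*},\red{\rho})$ is a finitely generated $\red{\rho}$-filtered complex of the kind produced by our Heegaard diagrams and $(C^{-*},\red{\rho}^{*})$ is its dual equipped with $\red{\rho}^{*}(x^{*}) = -\red{\rho}(x)$, in the sense of the definition preceding Theorem~\ref{thm:mirror}, then the quantity that defines $r$ satisfies $r(C^{-*},\red{\rho}^{*}) = -\,r(C_{*},\red{\rho})$. Over a field this is immediate from universal coefficients: $H_{-*}(C^{-*}) \cong H_{*}(C)^{\vee}$, the induced $\red{\rho}^{*}$-filtration on the left is the negative of the dual of the induced $\red{\rho}$-filtration on $H_{*}(C)$, so the $\red{\rho}^{*}$-support of $H_{-*}(C^{-*})$ is the negative of the $\red{\rho}$-support of $H_{*}(C)$ and a distinguished tower-type class is carried to the corresponding distinguished class of the dual. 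Over a general coefficient ring one first reduces to field coefficients, or argues directly on the pages of the associated spectral sequence, which are term-by-term dual with the filtration grading negated.

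With the lemma in hand the corollary is immediate. Theorem~\ref{thm:mirror} provides a filtered chain isomorphism between $\widehat{CF}_{*}(\DBC{-K},\mathfrak{s})$ with the $\red{\rho}$-filtration and $\widehat{CF}^{-*}(\DBC{K},\mathfrak{s})$ with the $\red{\rho}^{*}$-filtration, so, applying the lemma,
\[
  r_{-K}(\mathfrak{s}) \;=\; r\bigl(\widehat{CF}^{-*}(\DBC{K},\mathfrak{s}),\,\red{\rho}^{*}\bigr) \;=\; -\,r_{K}(\mathfrak{s}).
\]
For the unframed statement, $\DBC{K}$ has $H_{1}$ of odd order and hence a unique spin structure $\mathfrak{s}_{0}$; under the canonical identification $\text{Spin}^{c}(\DBC{K}) \cong \text{Spin}^{c}(\DBC{-K})$ already used in Theorem~\ref{thm:mirror} (the two $3$-manifolds differ only in orientation, and $\mathfrak{s}_{0}$ is singled out independently of orientation) the class $\mathfrak{s}_{0}$ on $\DBC{-K}$ corresponds to $\mathfrak{s}_{0}$ on $\DBC{K}$, so $r(-K) = r_{-K}(\mathfrak{s}_{0}) = -\,r_{K}(\mathfrak{s}_{0}) = -\,r(K)$.

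The main obstacle is the duality lemma, specifically the interaction of an \emph{induced} filtration with dualization. This is transparent over a field, where universal coefficients applies verbatim; over $\mathbb{Z}$ one must check that the $\text{Tor}$ and $\text{Ext}$ contributions do not disturb whichever feature of the $\red{\rho}$-filtered homology defines $r$ — they occupy intermediate filtration levels, so extremal or distinguished-class data should be unaffected, but this deserves an explicit check. A smaller point to confirm at this stage: if $r$ were defined by a one-sided extremum of the $\red{\rho}$-support rather than by a distinguished class or a symmetric combination, then $r$ of the dual would be the negative of the \emph{opposite} extremum, and the stated equality would require the support to be concentrated (as it is in the degenerate cases of interest here); so one should verify that the recipe fixed in Section~\ref{sec:r} is genuinely of the sign-reversing type. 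Once these are settled, no further argument is needed.
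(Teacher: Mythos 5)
Your proposal matches the paper's (implicit) argument: the paper offers no separate proof of this corollary and simply derives it from Theorem~\ref{thm:mirror}, exactly as you do. The uncertainty you flag is resolved by the definition fixed in Section~\ref{sec:r}: $r_{K}(\mathfrak{s}) = \tfrac{1}{2}\left(r^{max}_{K}(\mathfrak{s}) + r^{min}_{K}(\mathfrak{s})\right)$ is precisely the symmetric extremal combination you need, and since the level-$\leq s$ subcomplex of the dual (with $\red{\rho}^{*} = -\red{\rho}$) is the annihilator of the level-$< -s$ subcomplex of the original, the exact sequence relating $\iota^{r}_{K,\mathfrak{s}}$ to the corresponding quotient projection shows that $r^{min}$ and $r^{max}$ swap and negate under dualization, so their average changes sign.
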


In \cite{ss:R2}, Seidel and Smith conjectured the existence of a concordance invariant arising from this theory.  Motivated by their suggestion and by Corollaries \ref{cor:sum} and \ref{cor:mirror}, we make the following speculation:

\begin{conj}\label{conj:conc}
Let $\mathcal{C}$ denote the smooth knot concordance group.  The knot invariant $r(K)$ provides a well-defined group homomorphism
$$ r: \mathcal{C} \rightarrow \mathbb{Q}.$$
\end{conj}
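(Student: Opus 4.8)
The plan is to split the conjecture into a purely algebraic reduction and a single four-dimensional vanishing statement. Corollary~\ref{cor:sum} already establishes that $r$ is additive under connected sum and Corollary~\ref{cor:mirror} that $r(-K) = -r(K)$, so all that remains is to show that $r$ vanishes on the subgroup of smoothly slice knots: granting that, for concordant $K_1,K_2$ the knot $K_1\#(-K_2)$ is slice, so $r(K_1) - r(K_2) = r(K_1\#(-K_2)) = 0$, which gives well-definedness on $\mathcal{C}$, and additivity then promotes $r$ to a group homomorphism $\mathcal{C}\to\mathbb{Q}$. Thus the whole content of the conjecture is the implication
\[ K \text{ smoothly slice} \ \Longrightarrow\ r(K) = 0. \]

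To attack this I would mimic the way the Heegaard Floer correction terms obstruct sliceness. A smooth slice disk $D\subset B^4$ for $K$ produces a smooth four-manifold $W = \DBC{B^4,D}$ with $\partial W = \DBC{K}$ and $H_*(W;\mathbb{Q})\cong H_*(B^4;\mathbb{Q})$; that is, $W$ is a rational homology ball, and the Spin structure $\mathfrak{s}_0$ extends over $W$. The aim would be to show that $W$ induces a chain map involving $\widehat{CF}(\DBC{K})$ that is filtered for the $\red{\rho}$-filtration, up to a shift controlled by the Spin$^{c}$-data of $W$, and that this map (together with the one coming from the reversed cobordism) sandwiches $r(K)$ between two equal quantities and thereby forces $r(K)=0$ --- exactly as a rational-homology-ball filling forces $d(\DBC{K},\mathfrak{s})=0$ for the relevant Spin$^{c}$-structures. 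The first concrete step is therefore a \emph{relative} version of the anti-diagonal construction: build the filtered cobordism map from a handle decomposition of $W$ adapted to a braid/plat presentation of $K$, and track how $\red{R}=\red{T}-\red{Q}+\red{P}$, and hence $\red{\rho}=\red{R}-\tld{gr}$, changes under the elementary moves (births, deaths, band attachments) that realize a concordance.

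The hard part will be exactly this four-dimensional functoriality. The filtration $\red{\rho}$ is manufactured from $\red{R}$, a combinatorial/symplectic quantity read off a fork diagram for one chosen braid presentation of $K$, and it is not manifestly the shadow of any TQFT-type structure over cobordisms; even the invariance statements quoted here are proved by comparing diagrams for a fixed $K$ rather than by a gluing argument, so there is no ready-made filtered map attached to a surface in $B^4$. Producing one appears to demand genuinely new input --- plausibly a Lagrangian-Floer, sutured, or bordered refinement of Manolescu's construction in which the anti-diagonal filtration is natural. A softer, possibly more tractable route would be to identify $r(K)$ with, or bound it by, an already-established smooth concordance invariant, for instance a fixed linear combination of the correction terms $d(\DBC{K},\mathfrak{s})$ in the spirit of the Manolescu--Owens invariant $\delta(K)$; the conjecture would then follow formally from the known four-dimensional behavior of that invariant, and the two-bridge computation carried out elsewhere in this paper would furnish a consistency check on any such proposed identity.
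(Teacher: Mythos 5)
This statement is a \emph{conjecture} in the paper --- the author explicitly presents it as a speculation motivated by Corollaries \ref{cor:sum} and \ref{cor:mirror}, and offers no proof. Your algebraic reduction is correct and is exactly what those corollaries buy: additivity under connected sum together with $r(-K)=-r(K)$ reduces the whole conjecture to the single statement that $r$ vanishes on smoothly slice knots, since $K_{1}\#(-K_{2})$ is slice whenever $K_{1}$ and $K_{2}$ are concordant. That part of your write-up faithfully reproduces the formal content already present in the paper.

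The genuine gap is the one you yourself flag: nothing in the paper (or in your proposal) establishes that a slice disk forces $r(K)=0$. The filtration $\red{\rho}$ is defined diagrammatically from a fork diagram for a fixed plat presentation, and all invariance results here are proved by comparing Heegaard diagrams for the \emph{same} knot via handleslides and triangle injections; there is no cobordism map, no filtered TQFT structure, and no relative version of the anti-diagonal construction attached to a surface in $B^{4}$. Your proposed analogue of the $d$-invariant argument (a rational homology ball $W=\DBC{B^{4},D}$ inducing a filtered map) would require constructing and controlling such maps from scratch, which is new mathematics, not a step that can be filled in from the paper's toolkit. The ``softer route'' of identifying $r(K)$ with a combination of correction terms is likewise only a heuristic: the computation $r(K)=3\sigma(K)/4$ for two-bridge knots is a consistency check, not an identity valid for all knots. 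So your proposal is a reasonable research program for attacking the conjecture, but it is not a proof, and the conjecture remains open in the paper.
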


We defined in \cite{et:R} the notion of $\rho$-degeneracy of a knot (see Definition \ref{def:deg} below).  The following is a consequence of Proposition \ref{prop:RredR}:
\begin{prop}\label{prop:Rdeg}
Let $K \subset S^{3}$ be a $\rho$-degenerate knot.  Then the following hold:
\begin{enumerate}[(i)]
\item The filtration on $\widehat{HF}(\DBC{K}; \mathbb{F})$ induced by $\red{R}$ lifts the relative Maslov $\mathbb{Z}$-grading on each nontrivial factor $\widehat{HF}(\DBC{K}, \mathfrak{s};\mathbb{F})$  \label{Rdeg:2}
\item The grading $\red{R}$ is an invariant of $K$.
\end{enumerate}
\end{prop}

One sees this behavior when $K$ is a two-bridge knot.

\begin{thm}\label{thm:Rsignthm}
Let $\mathbb{F}$ be a field.  Let $K \subset S^{3}$ be a two-bridge knot.  Then $K$ is $\rho$-degenerate, and
\begin{equation*}
\text{dim}_{\mathbb{F}}\left( \widehat{HF}_{\red{R} = k} (\DBC{K}; \mathbb{F}) \right)= \begin{cases}
\text{det}(K)& \text{if } k = \frac{\sigma(K)}{2}\\
0 & \text{otherwise},
\end{cases}
\end{equation*}
where $\sigma(K)$ denotes the classical signature of $K$ and $\text{det}(K)$ denotes the determinant of $K$.
\end{thm}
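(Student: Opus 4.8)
The plan is to establish Theorem~\ref{thm:Rsignthm} in two stages: first verify $\rho$-degeneracy for two-bridge knots, then pin down the grading $\red{R}$ using the known structure of $\HF{K}$ for these knots. For the first stage, I would exploit the fact that for a two-bridge knot $K=K(p,q)$, the double branched cover $\DBC{K}$ is the lens space $L(p,q)$ (up to orientation conventions), so $\HFs{K} \cong \mathbb{Z}$ for every $\mathfrak{s} \in \text{Spin}^{c}(\DBC{K})$ and all differentials in $\widehat{CF}$ vanish after passing to a suitable reduced Heegaard diagram. By Proposition~\ref{prop:Rdeg}, $\rho$-degeneracy is equivalent to the statement that $\red{R}$ is a grading lifting the (trivial) relative Maslov grading on each rank-one factor. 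The cleanest route is to produce an explicit reducible fork diagram from a standard $4$-plat (i.e.\ a braid in $B_{4}$, $n=2$) presentation of $K(p,q)$, count Bigelow generators, and check directly that $\red{R}$ takes a single value on each $\text{Spin}^{c}$-summand. Since there are $p=\det(K)$ generators in such a diagram (one for each element of $\text{Spin}^{c}(L(p,q))$, matching $\text{rank}\,\widehat{HF}(L(p,q)) = p$), and since the reduced complex has no differential, degeneracy at $\red{E}_1$ is automatic; what remains is constancy of $\red{\rho}$ on each factor, which for rank-one factors is the same as $\red{R}$ being well-defined as a grading there.

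\emph{Next} I would compute the actual value of $\red{R}$ on the generators. Here I would combine two ingredients: Proposition~\ref{prop:RredR}, which identifies the $\red{R}$-filtered complex (tensored appropriately) with the $R$-filtered complex for $\DBCs{K}$ whose behavior was analyzed in \cite{et:R}, and the combinatorial formula $\red{R} = \red{\tld{R}} + s_{\red{R}}(b,D)$ with $\red{\tld{R}} = \red{T} - \red{Q} + \red{P}$. For the $4$-plat of $K(p,q)$, the functions $\red{T}, \red{Q}, \red{P}$ should be computable in closed form in terms of the continued fraction expansion of $p/q$, and the shift $s_{\red{R}}(b,D) = \frac{e(b)-w(D)-2}{4}$ is explicit once the braid word and diagram are fixed. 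Summing these contributions, I expect $\red{R}$ to come out equal to $\sigma(K)/2$ on all $p$ generators; the appearance of the signature is natural because the Maslov shift in Seidel--Smith's construction is governed by a Maslov-index/signature term, and for two-bridge knots $\sigma(K)$ is itself computed from the same continued fraction data (via the classical Goeritz/Gordon--Litherland formula).

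\emph{Alternatively}, and perhaps more robustly, once $\rho$-degeneracy is known I would use the invariance of the grading $\red{R}$ (the last sentence of Proposition~\ref{prop:Rdeg}) together with the additivity and mirror properties. By Corollary~\ref{cor:sum} the invariant $r(K) = r_K(\mathfrak{s}_0)$ is additive under connected sum and by Corollary~\ref{cor:mirror} it changes sign under mirroring; since the two-bridge knots generate a large subgroup of the concordance group and both $r$ and $\sigma/2$ are additive, it suffices to verify $\red{R} \equiv \sigma(K)/2$ on a generating family (e.g.\ torus knots $T(2,2k+1)$ and twist knots), where the fork-diagram computation is smallest. One then extends to general $K(p,q)$ by the connected-sum Künneth theorem (Theorem~\ref{thm:sum}) applied to the continued-fraction decomposition, checking that the grading is concentrated in the single degree $\sigma(K)/2$ with total rank $\det(K) = p$ — the determinant multiplies and the signature adds, consistently.

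\textbf{The main obstacle} I anticipate is the first stage: proving $\rho$-degeneracy, i.e.\ that $\red{R}$ really is constant on each $\text{Spin}^{c}$-factor rather than merely a filtration. The rank-one-ness of the factors makes the \emph{relative} grading condition vacuous, but one must still show there is no ``spread'' — that $\red{R}$ assigns a single integer to the single generator in each factor in a way compatible across a change of reducible fork diagram — and, crucially, that such a reducible fork diagram with the minimal number of generators actually exists for every two-bridge knot. Verifying that the $4$-plat presentation can be arranged to be reducible and that the generator count is exactly $p$ (no cancellation in $\widehat{HF}$, which we get for free from $\DBC{K}$ being a lens space, but also no extra Bigelow generators in the diagram) is the delicate bookkeeping step. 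Once that is in hand, the identification of the constant value with $\sigma(K)/2$ is a finite computation that I expect to follow cleanly from the continued-fraction formulas for $\red{T}, \red{Q}, \red{P}$ and for $\sigma(K)$.
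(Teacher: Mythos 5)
Your primary route is essentially the paper's proof. The paper takes the braid $\sigma_{2}^{b_{1}}\sigma_{1}^{-b_{2}}\cdots\sigma_{2}^{b_{k}}$ in $B_{4}$ whose plat closure is the Conway form (automatically reducible, since the Conway form never braids the rightmost strand), and shows by induction on the braid word (Proposition \ref{prop:Rtildeprop}) that \emph{every} reduced Bigelow generator lies in the single level $\red{\tld{R}}=0$ if $b_{1}>0$, or $\red{\tld{R}}=1$ if $b_{1}<0$. Constancy of $\red{R}$ on the whole complex then forces $\widehat{\del}=0$ and gives $|\red{\G}|=\operatorname{rank}\widehat{HF}(\DBC{K})=\det(K)$ for free --- so your worry about extra Bigelow generators and your detour through the rank-one-ness of the $\text{Spin}^{c}$ factors are both unnecessary; global constancy is what the induction delivers, and $\rho$-degeneracy follows at once. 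The identification of the common value with $\sigma(K)/2$ is done exactly as you predict: an explicit Goeritz matrix computation for the checkerboard-colored Conway form (Lemma \ref{goeritzlem}), the Gordon--Litherland formula, and the bookkeeping identities $w(D)=\mu_{II}-\mu_{I}$ and $-(\mu_{I}+\mu_{II})=\sum_{i}b_{i}$.

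One warning: your ``alternative'' route is a dead end. Two-bridge knots are prime, so a general $K(p,q)$ is not a connected sum of torus and twist knots; the continued-fraction expansion decomposes the Conway form into twist regions of a single prime diagram, not into connected summands, so Theorem \ref{thm:sum} does not apply to it. Moreover, additivity of $r$ under connected sum would at best determine the numerical invariant $r$ on the subgroup of $\mathcal{C}$ generated by two-bridge knots; it cannot yield the stronger conclusion that $\widehat{HF}(\DBC{K})$ is supported in a single $\red{R}$-level for each two-bridge knot, which is what the theorem asserts. Commit to the direct fork-diagram induction.
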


We speculate the following, as suggested by Seidel and Smith in \cite{ss:R2}:

\begin{conj}\label{conj:ss}
Every knot $K \subset S^{3}$ is $\rho$-degenerate.
\end{conj}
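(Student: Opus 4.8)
The plan is to reduce Conjecture \ref{conj:ss} to a single rank computation and then to attack that computation by exploiting the fixed-point structure underlying the reduced theory. By Proposition \ref{prop:Rdeg}, a knot $K$ is $\rho$-degenerate exactly when the reduced filtration $\red{R}$ is a grading lifting the relative Maslov grading on each nontrivial factor $\HFs{K}$; equivalently, the reduced spectral sequence collapses at $\red{E}_{1}$ and $\red{\rho}$ is constant on each factor. Since the differentials $\red{d}_{k}$ can only decrease rank, a sufficient condition for collapse at $\red{E}_{1}$ is the rank equality
\begin{equation*}
\mathrm{rank}_{\mathbb{F}}\, \red{E}_{1}(K) = \mathrm{rank}_{\mathbb{F}}\, \widehat{HF}(\DBC{K})
\end{equation*}
over a field $\mathbb{F}$. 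Thus the conjecture would follow from identifying the rank of the $\red{E}_{1}$-page---a factor of the fixed-point symplectic Khovanov cohomology $\Kst{K}$---with that of $\widehat{HF}(\DBC{K})$, together with a check that the surviving homology lies in a single $\red{\rho}$-filtration level on each factor, so that the grading-lift condition of Proposition \ref{prop:Rdeg}(\ref{Rdeg:2}) holds.

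First I would narrow the class of knots to be considered. Theorem \ref{thm:sum} and Corollary \ref{cor:sssum} show that over a field $\red{E}_{k}(K_{1} \# K_{2}) \cong \red{E}_{k}(K_{1}) \otimes \red{E}_{k}(K_{2})$; hence if $K_{1}$ and $K_{2}$ are both $\rho$-degenerate, the tensor-product spectral sequence collapses at $\red{E}_{1}$ and $K_{1} \# K_{2}$ is $\rho$-degenerate as well. Theorem \ref{thm:mirror} shows that degeneracy is insensitive to mirroring. Together these reduce the conjecture to prime knots of either chirality. A natural intermediate target is the class of quasi-alternating knots: for such $K$ the double branched cover $\DBC{K}$ is an L-space, so $\mathrm{rank}\,\widehat{HF}(\DBC{K}) = \det(K)$, and one may hope to promote the two-bridge argument behind Theorem \ref{thm:Rsignthm}---where all of $\widehat{HF}(\DBC{K})$ concentrates in the single grading $k = \sigma(K)/2$---to this broader family by an induction over a suitable resolution or unknotting operation.

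The core step is then the rank identification for the $\red{E}_{1}$-page. Here I would analyze the $\mathbb{Z}/2$-action whose fixed points define $\Kst{K}$, using a localization or Smith-type comparison to pin the rank of the invariant theory against that of $\widehat{HF}(\DBC{K})$; alternatively one could attempt to show directly, at the chain level, that every higher differential $\red{d}_{k}$ ($k \geq 1$) vanishes. The most tractable form of the latter is a grading/parity obstruction: if the relative Maslov grading and the $\red{R}$-filtration level can be shown to be correlated in such a way that no nonzero differential respects both, then collapse \emph{and} the constancy of $\red{\rho}$ on each factor follow at once, exactly as in the two-bridge case of Theorem \ref{thm:Rsignthm}.

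The hard part---and the reason the statement remains a conjecture---is that there is no a priori mechanism forcing the rank equality when $\DBC{K}$ fails to be an L-space. For a non-quasi-alternating knot one has $\mathrm{rank}\,\widehat{HF}(\DBC{K}) > \det(K)$, and the Ozsv\'ath--Szab\'o spectral sequence from Khovanov homology does \emph{not} collapse in general; any proof must therefore show that the fixed-point, reduced nature of the present construction makes $\red{E}_{1}$ track $\widehat{HF}(\DBC{K})$ rather than (reduced) Khovanov homology. Establishing this demands genuinely new input---either a precise description of the moduli spaces governing the differentials $\red{d}_{k}$, or an independent computation of $\mathrm{rank}\,\Kst{K}$ matching $\mathrm{rank}\,\widehat{HF}(\DBC{K})$ for arbitrary $K$---and I expect this to be the principal obstacle to any complete proof.
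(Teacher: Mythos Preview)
The statement you are addressing is labeled in the paper as a \emph{conjecture}, not a theorem: the paper offers no proof of it and explicitly presents it as speculation ``as suggested by Seidel and Smith.'' There is therefore no paper proof to compare your proposal against.

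Your write-up is not a proof either, and to your credit you recognize this in the final paragraph. What you have produced is a reasonable strategic outline: reduce to prime knots via Theorems~\ref{thm:sum} and~\ref{thm:mirror}, attack the quasi-alternating case first where $\DBC{K}$ is an L-space, and then confront the rank identification between $\red{E}_{1}$ and $\widehat{HF}(\DBC{K})$ in general. You correctly isolate the essential obstruction---there is no known mechanism forcing the rank equality outside the L-space setting---and you correctly note that rank equality alone gives only collapse at $\red{E}_{1}$, not the additional constancy of $\red{\rho}$ on each $\text{Spin}^{c}$ factor required by Proposition~\ref{prop:Rdeg}.

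One small caution: your reduction to prime knots via Corollary~\ref{cor:sssum} shows that $\rho$-degeneracy is closed under connected sum, but this does not by itself reduce the conjecture to prime knots, since a composite knot could in principle be $\rho$-degenerate even if its summands are not (the tensor product of two non-collapsing spectral sequences can still collapse). The reduction works in the useful direction---proving degeneracy for primes suffices---but is not a logical equivalence.
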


The constructions in this paper and in \cite{et:R} can be expanded to links, with the limitation that one can only define the filtrations $\rho$ and $\red{\rho}$ on the summands of the $\widehat{CF}$ complexes corresponding the torsion $\text{Spin}^{c}$-structures.  We'll make use of this extension in the in Section \ref{sec:RredR} - see Remark \ref{rmk:links}.  The situation for links is discussed further in Section \ref{sec:links}.

\begin{rmk}
From now on, we'll only explicitly include the coefficient ring in the notation when the distinction is necessary.  The background constructions of Section \ref{sec:HF} and constructions of Section \ref{sec:redR} through Section \ref{sec:RredR}  work over either of $\mathbb{Z}$ or over $\mathbb{F}$.
\end{rmk}




\section{HEEGAARD FLOER THEORY}\label{sec:HF}

In \cite{os:disk}, \OS define the Heegaard Floer homology group $\widehat{HF}(M)$ associated to a connected, closed, oriented 3-manifold $M$.  A genus-g Heegaard splitting for such a manifold can be described via a \textit{pointed Heegaard diagram} $\h_{\ba\bb} = \left(\Sigma; \ba; \bb; z\right)$, where $\Sigma$ is the splitting surface, $\ba$ and $\bb$ are g-tuples of attaching curves for the handlebodies, and $z \in (\Sigma \setminus \cup \alpha_{i} \setminus \cup \beta_{i})$.  Recall the following definitions:

\begin{df}\label{df:pd}
Let $\left(\Sigma; \ba; \bb; z\right)$ be a pointed Heegaard diagram, and let $D_1, \ldots, D_m$ be the connected components of $\Sigma \setminus \left( \cup \alpha_i \right) \setminus \left( \cup \beta_i \right)$, where $z \in D_m$.  Then a two-chain
$$ \mathcal{P}:= \sum_{i =1}^{m-1}n_{i} D_i \quad \text{with} \quad n_i \in \mathbb{Z}$$
is called a \textit{periodic domain} if its boundary is a sum of $\alpha$ and $\beta$ circles.
\end{df}

\begin{df}\label{df:ad}
A Heegaard diagram $\left(\Sigma; \ba; \bb; z\right)$ is called \textit{admissible} if every periodic domain has both positive and negative coefficients.
\end{df}

Given an admissible pointed Heegaard diagram one can compute the group $\widehat{HF}(M)$, which is the Lagrangian Floer homology of the tori $\Ta := \alpha_1 \times \ldots \times \alpha_g$ and $\Tb:= \beta_1 \times \ldots \times \beta_g$ lying inside of the symplectic manifold $\text{Sym}^g(\Sigma \setminus z)$.

More precisely, the group $\widehat{CF}(\h_{\ba\bb})$ is generated by the set of intersections $\Ta \cap \Tb \subset \Symg$, and the differential is given by
$$\widehat{\partial}(\bx) =
\displaystyle\sum_{\by \in \Ta \cap \Tb} \left( \displaystyle\sum_{\{\phi \in \pi_{2}(\bx, \by)| \mu(\phi) = 1, n_{z}(\phi) = 0\}} \left(\# \widehat{\mathcal{M}}\left(\phi\right)\right)\right)\by,
$$
where $\pi_2(\bx,\by)$ denotes the group of homotopy classes of 2-gons connecting $\bx$ and $\by$, $\widehat{\mathcal{M}}(\phi)$ denotes the reduced moduli space of pseudo-holomorphic representatives for the class $\phi$, $\mu(\phi)$ denotes the Maslov index of $\phi$, and $n_z(\phi):=\text{Im}(\phi) \cap \left( \{ z \} \cap \text{Sym}^{g-1}(\Sigma)\right).$

Classes of such 2-gons are typically studied by analyzing their ``shadows'' in the surface $\Sigma$.

\begin{df}
Let $\left( \Sigma; \ba ; \bb; z \right)$ be a pointed Heegaard diagram, and denote by $\mathcal{D}_0, \mathcal{D}_1, \ldots, \mathcal{D}_N$ the connected components of $\Sigma \setminus \left( \cup_i \alpha_i\right) \setminus \left( \cup_i \beta_i \right),$ where $\mathcal{D}_0$ is the component containing the basepoint $z$.  Then for $0 \leq j \leq N$, choose a point $z_j$ in the interior of $\mathcal{D}_j$.  For some class $\phi \in \pi_2(\bx, \by)$ for $\bx, \by \in \Ta \cap \Tb$, the \textit{domain of $\phi$}  is the 2-chain
$$\mathcal{D}(\phi):= \sum_{j = 0}^{N} n_j \mathcal{D}_j \quad \text{where} \quad n_j:= \text{Im}(\phi) \cap \left(\left \{ z_j \right\} \times \text{Sym}^{g-1}(\Sigma) \right).$$
We'll say that $\phi$ \textit{avoids the basepoint} if $n_0 = 0$ (equivalently, $n_z(\phi) = 0$).
\end{df}

Recall that there is a function
\begin{equation*}
\mathfrak{s}_{z}: \Ta \cap \Tb \longrightarrow \text{Spin}^{c}(M)
\end{equation*}
partitioning $\Ta \cap \Tb$ into equivalence classes $\Us$.  This function induces decompositions
\begin{equation*}
\widehat{CF}(\h)  = \displaystyle \bigoplus_{\mathfrak{s} \in \text{Spin}^{c}(M)} \widehat{CF}(\h, \mathfrak{s}) \quad \text{and} \quad
\HFM = \displaystyle \bigoplus_{\mathfrak{s} \in \text{Spin}^{c}(M)} \widehat{HF}(M, \mathfrak{s}).
\end{equation*}

For each $\mathfrak{s} \in \text{Spin}^{c}(M)$ the chain complex $\widehat{CF}(M,\mathfrak{s})$ carries a relative grading $gr$ defined via the Maslov index.  For $\mathfrak{s} \in \text{Spin}^{c}(M)$ torsion, \OS use surgery cobordisms to construct in \cite{os:tri} an absolute $\mathbb{Q}$-valued grading $\tld{gr}$ on $\Us$ which lifts the relative grading in the following sense: if $\bx, \by \in \Us$, then
\begin{equation*}
\tld{gr}(\bx) - \tld{gr}(\by) = gr(\bx, \by). 
\end{equation*}
Whenever $b_{1}(M) = 0$, all $\text{Spin}^{c}$ structures on $M$ are torsion and so the group $\widehat{HF}(M)$ can be absolutely graded via $\tld{gr}$.  In particular, this holds for $M =\DBC{K}$ for a knot $K \subset S^{3}$.  Although $\text{Spin}^{c}(\DBCs{K})$ contains non-torsion elements, the group $\HFxs{K}$ is nontrivial only if $\mathfrak{s}$ is torsion.  

There is an analogous notion of a \textit{pointed Heegaard triple-diagram} (resp. \textit{quadruple-diagram}), and one can study \textit{triply-periodic domains} (resp. \textit{quadruply-periodic domains}) in such a diagram; a pointed triple-diagram (resp. quadruple-diagram) is called \textit{admissible} if every triply-periodic (resp. quadruply-periodic) domain has both positive and negative coefficients.

A cobordism between closed 3-manifolds can be described be an admissible pointed triple-diagram $\left(\Sigma; \ba; \bb; \bg; z\right)$, which in turn induces a chain map $\widehat{f}_{\alpha\beta\gamma}: \widehat{CF}(\h_{\ba\bb}) \otimes  \widehat{CF}(\h_{\bb\bg}) \rightarrow  \widehat{CF}(\h_{\ba\bg})$ given by
\begin{equation*}
\widehat{f}_{\alpha\beta\gamma}(\bx \otimes \by) =
\displaystyle\sum_{\bw \in \Ta \cap \Tg} \left( \displaystyle\sum_{\{\psi \in \pi_{2}(\bx, \by, \bw)| \mu(\psi) = 0, n_{z}(\psi) = 0\}} \left(\# \mathcal{M}\left(\psi\right)\right)\right)\bw
\end{equation*}
Here $\pi_2(\bx, \by, \bw)$ is the space of homotopy classes of 3-gons connecting $\bx$, $\by$, and $\bw$, $\mathcal{M}\left( \psi \right)$ is the moduli space of pseudo-holomorphic representatives for the class $\psi$, and $\mu(\psi)$ is the Maslov index of $\psi$.

We'll be particularly interested in maps induced by Heegaard moves.

\begin{df}\label{def:moves}
Let $\left(\Sigma; \ba; \bb; \bb'; z\right)$ be a pointed Heegaard triple-diagram.
\begin{enumerate}[(i)]
\item Let $\beta'_j$ differ from $\beta_j$ by an isotopy (avoiding $z$) such that $\beta'_j$ intersects $\beta_j$ transversely in two canceling points and $\beta_j \cap \beta'_i = \emptyset$ when $i \neq j.$  Then we say that $\bb'$ differs from $\bb$ by a \textit{pointed isotopy}.  A pointed isotopy which preserves the set of intersection points $\tor{\ba}\cap\tor{\bb}$ in the obvious way will be called a \textit{small pointed isotopy}.
\item Instead let $\beta_{1}$, $\beta_{2}$, and $\beta_{1}'$ bound an embedded pair of pants disjoint from $z$ such that $\beta'_{1}$ intersects $\beta_{1}$ transversely in two points.  Assume also that $\beta_j \cap \beta'_i = \emptyset$ for $i \neq j$, and that for $i > 1$, $\beta'_{i}$ relates to $\beta_i$ as $(i)$ above.  Then we say that $\bb'$ differs from $\bb$ by a \textit{pointed handleslide}.
\end{enumerate}
\end{df}

When a cobordism is induced by a pointed isotopy or a pointed handleslide relating two diagrams for the same manifold, one can use the chain map described above to define a chain homotopy equivalence.  More precisely, consider an admissible pointed Heegaard quadruple-diagram $\left( \Sigma; \ba; \bb; \bb'; \bbt; z \right)$ such that $\bb'$ differs from $\bb$ by a pointed handeslide or pointed isotopy and such that $\bbt$ differs from $\bb$ by a small pointed isotopy.  There is a distinguished representative $\thet{\bb\bb'} \in \Tb \cap \tor{\bb'}$ for the top-degree generator of $\widehat{HF}(\h_{\beta\beta'})$, and the 3-gon counting chain map $\widehat{f}_{\ba\bb\bb'}(\cdot \otimes \thetabb):\widehat{CF}(\h_{\ba\bb}) \rightarrow \widehat{CF}(\h_{\ba\bb'})$ is a homotopy equivalence whose homotopy inverse is given by $\widehat{f}_{\ba\bb'\bb}(\cdot \otimes \thet{\bb'\bb}):\widehat{CF}(\h_{\ba\bb'}) \rightarrow \widehat{CF}(\h_{\ba\bb})$.  The homotopies relating their compositions to identity maps are constructed from maps counting holomorphic representatives of 4-gons arising in the diagram $\left( \Sigma; \ba; \bb; \bb'; \bbt; z \right)$.

On the other hand, if $\ba'$ differs from $\ba$ by a pointed handleslide or pointed isotopy and $\bat$ differs from $\ba$ by a small pointed isotopy such that the pointed quadruple-diagram $\left( \Sigma; \bat; \ba';\ba; \bb; z \right)$ is admissible, then $\widehat{f}_{\ba'\ba\bb}(\thet{\ba'\ba} \otimes \cdot )$ is a homotopy equivalence with homotopy inverse given by $\widehat{f}_{\ba\ba'\bb}(\thet{ \ba\ba' } \otimes \cdot)$.  For a more detailed description of 3-gon counting chain maps and 4-gon counting chain homotopies, see the review in Section 2.1 of \cite{et:R} or the original discussion in Section 8 of \cite{os:disk}.

The following lemmas assure that certain pointed handleslides and pointed isotopies preserve admissibility of pointed Heegaard diagrams.  Lemma \ref{lem:isoad} is from \cite{sw:CHF}, and the proof appearing there involves a straightforward analysis of domain coefficients.  We'll use a similar method here to prove Lemma \ref{lem:hsad}.

\begin{lem}\label{lem:isoad}(\cite{sw:CHF})
Let $\h$ and $\h'$ be pointed Heegaard diagrams which differ in a local region as shown in Figure \ref{fig:isoad}, and coincide elsewhere.  Then if $\h$ is admissible, so is $\h'$.
\end{lem}

\begin{figure}[h!]
\centering
\begin{minipage}[c]{.60\linewidth}
\subfigure[Before the isotopy]{
\includegraphics[height=12mm]{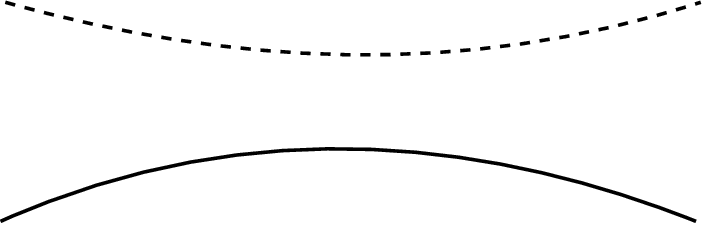}}\qquad
\subfigure[After the isotopy]{
\includegraphics[height=12mm]{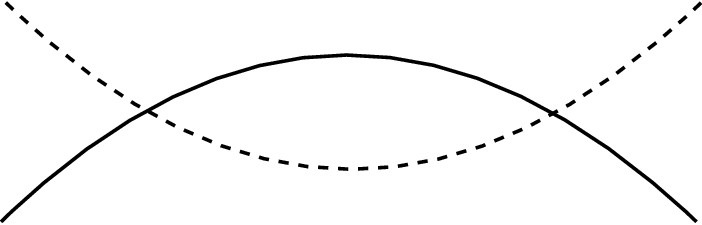}}
\end{minipage}
\begin{minipage}[c]{.35\linewidth}
\caption{A pointed isotopy\label{fig:isoad}}
\end{minipage}
\end{figure}

\begin{lem}\label{lem:hsad}
Let $\h$ and $\h'$ be two pointed Heegaard diagrams such that $\h$ can be obtained from $\h'$ by a handleslide of the form shown in Figure \ref{fig:hsad}.  Then if $\h$ is admissible, so is $\h'$.
 \end{lem}
 
 \begin{figure}[h!]
\centering
\subfigure[Before the handleslide]{
\labellist 
\small
\pinlabel* {$\DD_1$} at 570 180
\pinlabel* {$\DD_2$} at 475 185
\pinlabel* {$\DD_3$} at 350 185
\pinlabel* {$\DD_4$} at 130 285
\pinlabel* {$\DD_5$} at 110 60
\pinlabel* {$\DD_6$} at 260 180
\pinlabel* {$\DD_7$} at 165 230
\pinlabel* {$\DD_8$} at 160 110
\endlabellist
\includegraphics[height=45mm]{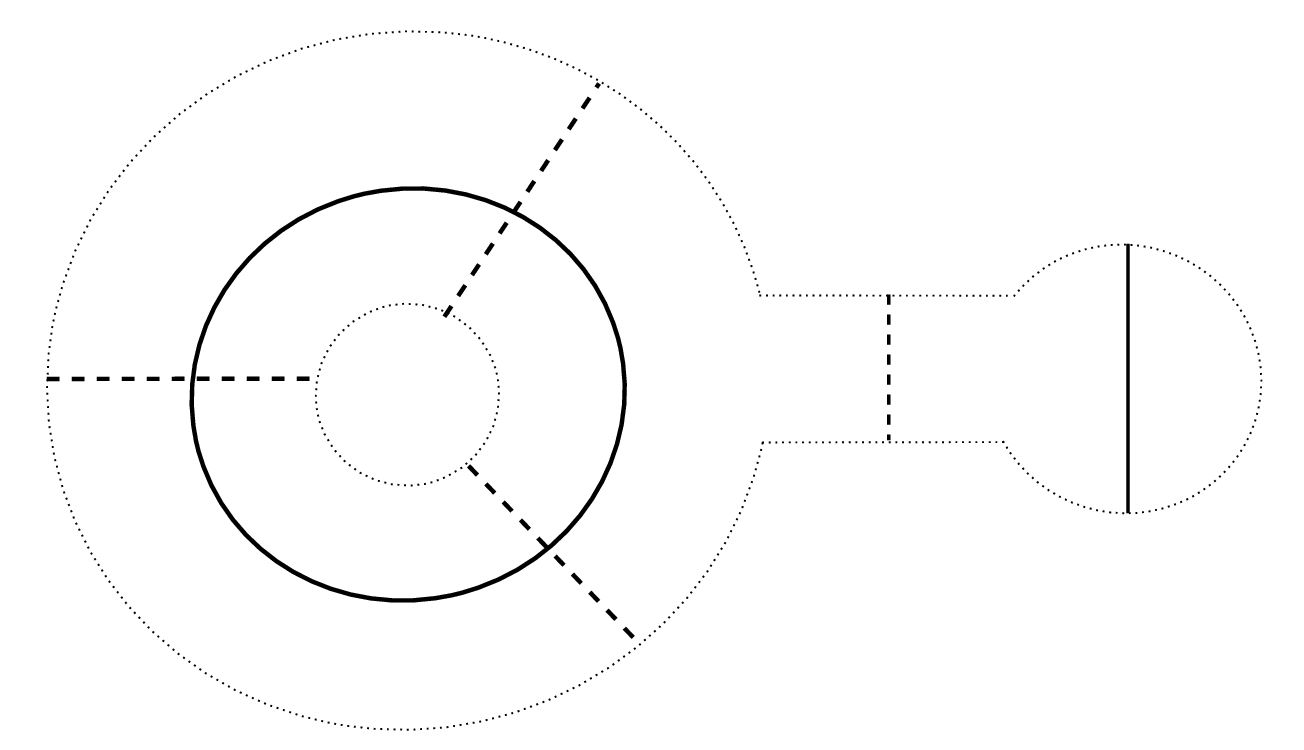}\label{fig:hsad1}}
\subfigure[After the handleslide]{
\labellist 
\small
\pinlabel* {$\DD'_1$} at 570 180
\pinlabel* {$\DD'_2$} at 515 218
\pinlabel* {$\DD'_3$} at 344 224
\pinlabel* {$\DD'_4$} at 134 313
\pinlabel* {$\DD'_5$} at 82 62
\pinlabel* {$\DD'_6$} at 260 180
\pinlabel* {$\DD'_7$} at 165 230
\pinlabel* {$\DD'_8$} at 160 110
\pinlabel* {$\tld{\DD}_3$} at 347 132
\pinlabel* {$\tld{\DD}_2$} at 517 140
\pinlabel* {$\tld{\DD}_6$} at 310 180
\pinlabel* {$\tld{\DD}_7$} at 140 270
\pinlabel* {$\tld{\DD}_8$} at 120 88
\endlabellist
\includegraphics[height=45mm]{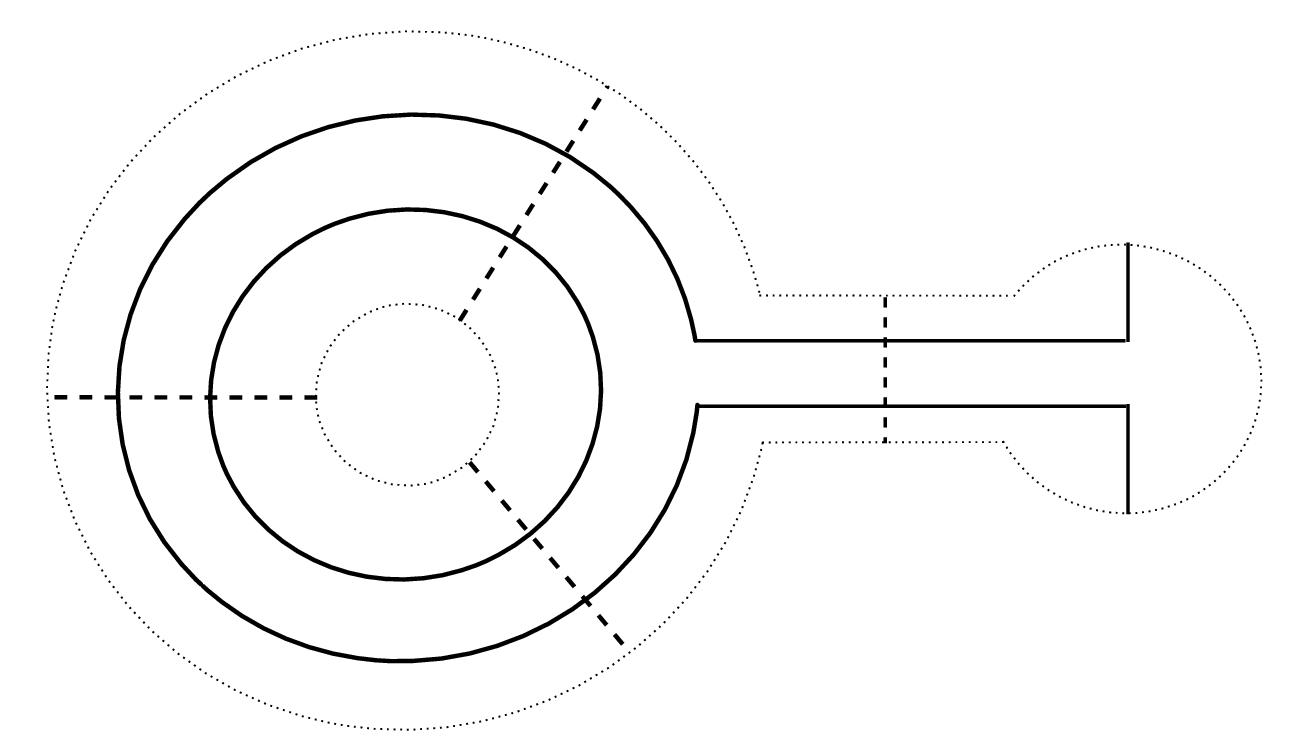}\label{fig:hsad2}}
\caption[Domains for checking admissibility of a certain pointed handleslide]{Heegaard diagrams before and after a certain pointed handeslide.  The $\alpha$ arcs are solid and the $\beta$ arcs are dotted (or vice versa).  There can be arbitrarily many radial arcs on the annulus and arbitrarily many vertical arcs on the ``neck'' in the center.\label{fig:hsad}}
\end{figure}
 \begin{proof}
Label the  $n$ (resp. $n+5$) regions of the pointed Heegaard diagram $\h$ (resp. $\h'$) as indicated in Figure \ref{fig:hsad1} (resp. \ref{fig:hsad2}), where $\DD'_{k}$ coincides with $\DD_k$ for $k > 8$.  Now let $\mathcal{P'}$ be a periodic domain in $\h'$ with
$$ \mathcal{P}' = b_2 \tld{\DD}_2 + b_3 \tld{\DD}_3 + b_6 \tld{\DD}_6+ b_7 \tld{\DD}_7 + b_8 \tld{\DD}_8 + \sum_{j = 1}^{n} c_j \DD'_j$$
Now notice that $c_3 -b_6 = c_2-c_1 =  b_3 - b_6 = b_2 - c_1 = c_4 - b_7 = c_5 - b_8$ and $b_6-c_6 = b_7-c_7 = b_8-c_8$.  As a result, $b_2 = c_2$, $b_3 = c_3$, and $c_3-c_6 = c_4 - c_7  = c_5 - c_8$, and so there is a periodic domain $\mathcal{P}$ in $\h$ with
$$ \mathcal{P} = \sum_{j = 1}^n c_j \DD_j.$$
Since $\h$ is admissible, there are both positive and negative integers among the $c_j$.
 \end{proof}

\section{A REDUCED FILTRATION ON $\widehat{CF}(\DBC{K})$}\label{sec:redR}

We describe how to define a reduced version of the $\rho$-filtration, denoted by $\red{\rho}$, which is a $\mathbb{Q}$-valued filtration on the chain complex $\widehat{CF}(\DBC{K})$ (a definition first mentioned by Manolescu in \cite{cm:R}).  This reduced version is much simpler to compute than the unreduced theory, and Theorem \ref{thm:redRthm} gives an invariance result for the reduced filtration.

First we discuss some terminology which we'll use when constructing the reduced filtration.

\subsection{Plat closures of braids}\label{sec:plat}

Let $\Bn$ denote the braid group on $2n$ strands.  This group is generated by $\{ \sigma_{1} , \ldots, \sigma_{2n-1}\}$, where $\sigma_{k}$ denotes a half-twist of the $k^{th}$ strand over the $(k+1)^{st}$ strand.  Given a braid $b \in \Bn$, we can obtain a diagram of a link called the \textit{plat closure} of $b$ by connecting ends of consecutive strands with segments at the top and bottom, as shown in Figure \ref{fig:explat}.

\begin{figure}[h!]
\centering
\centering\includegraphics[height = 30mm]{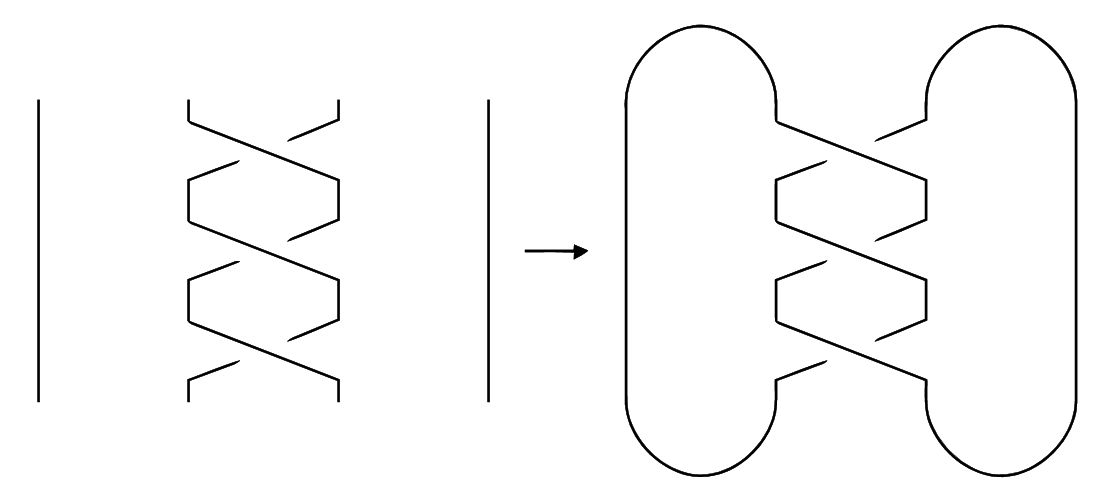}
\caption{The left-handed trefoil is the plat closure of $\sigma_2^3 \in \B{4}.$}\label{fig:explat}
\end{figure}

Clearly many pairs of braids have isotopic links as their plat closures; two braids have isotopic plat closures if and only if they can be related by a finite sequence of standard moves called \textit{Birman moves} (\cite{bir:moves}).

\subsection{Fork diagrams}\label{sec:fork}
Let $D \subset \mathbb{C}$ denote the unit disk, choose $2n$ points $\mu_{1} , \ldots, \mu_{2n}$ evenly spaced along $\mathbb{R} \cap D$, and let the set of punctures be denoted by $\tau$.  We can view the braid group $\B{2n}$ as the mapping class group of the punctured disk $D_{2n}:=D \setminus \tau$, where the generator $\sigma_{k}$ is a diffeomorphism which is the identity outside of a neighborhood of the $k^{th}$ and $(k+1)^{st}$ punctures and exchanges these two punctures by a counter-clockwise half-twist.  Any braid can be written as a word in the $\sigma_{k}$'s, and we view them as operating on $D_{2n}$ in this way, read from left to right.


\begin{df}\label{defforks}
Let $D \subset \mathbb{C}$ be the unit disk and let $b \in \Bn$ be an oriented braid on $2n$ strands.  
\begin{enumerate}[(i)]
\item Let the \emph{standard fork diagram} in $D_{2n}$ be a collection of embeddings
\begin{equation*}
\alpha_{1}, \ldots ,\alpha_{n}:I \rightarrow D \quad \text{and} \quad h_{1}, \ldots , h_{n}:I \rightarrow D
\end{equation*}
called \emph{tine edges} and \emph{handles}, respectively, such that the following hold:
\begin{enumerate}[(a)]
\item The arcs $\left\{\alpha_{k}(I) \right\}_{k=1}^n$ are pairwise disjoint horizontal segments and the arcs $\left\{h_{i}\right\}_{k=1}^n$ are pairwise disjoint vertical segments.
\item For each $k$, we have that
$$\alpha_{k}(0) = \mu_{2k-1}, \quad \alpha_{k}(1) = \mu_{2k}, \quad h_{k}(1) = d_{k} \in \partial D, \quad \text{and} \quad h_{k}(0) = m_{k} = \frac{1}{2}(\mu_{2k-1} + \mu_{2k}).$$
\end{enumerate}
\item Let a \emph{fork diagram for b} be the standard fork diagram along with the compositions $b \circ \alpha_{1}, \ldots, b \circ \alpha_{n}$ and $b \circ h_{1}, \ldots, b \circ h_{n}$.  We'll also let $\beta_{k} := b \circ \alpha_{k}$.
\item Let an \emph{augmented fork diagram for b} be obtained from a fork diagram by replacing each arc $\beta_{k}$ with $bE_{k}$, where $E_{k}$ is an immersed figure-eight which encircles $\mu_{2k-1}$ and $\mu_{2k}$ and is oriented such that it winds counter-clockwise about $\mu_{2k}$.
\end{enumerate}
\end{df}

\begin{figure}[h!]
\centering
\subfigure[Tine edges $\alpha_{k}$]{
\includegraphics[height=32mm]{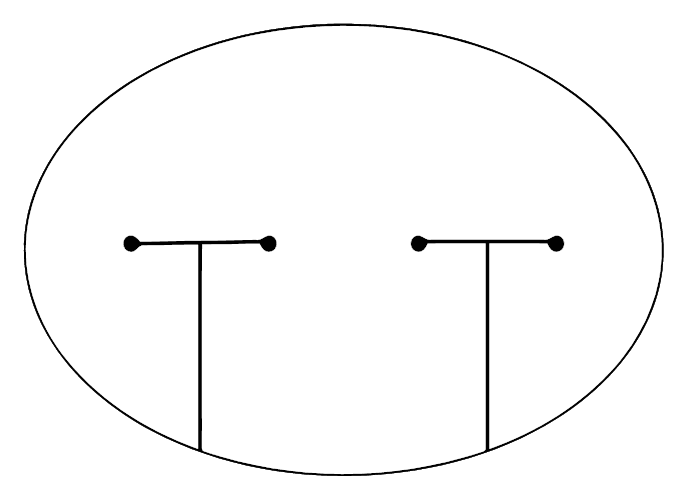}}\quad
\subfigure[Figure-eights $E_{k}$]{
\includegraphics[height=32mm]{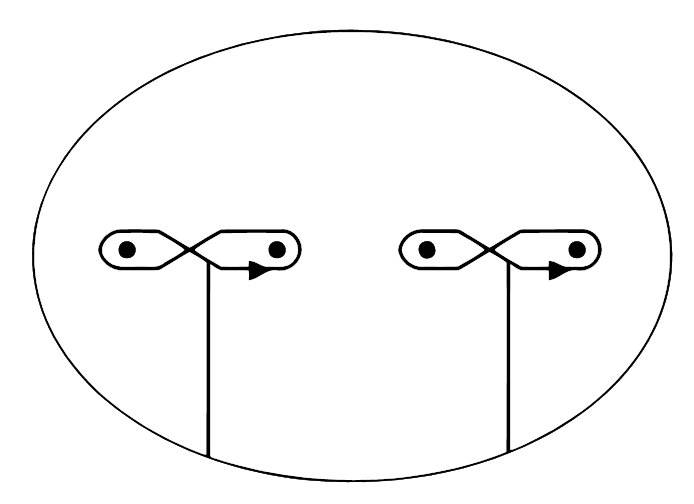}}
\caption{Structures in fork diagrams \label{fig:standardfork}}
\end{figure}

The reader should notice that by drawing a picture containing just the $\alpha$ and $\beta$ arcs and treating the $\alpha$ arcs as undercrossings at each intersection, we get a diagram of the plat closure of $b$.

If the plat closure of the braid $b$ is the knot $K$, then we will often refer to a fork diagram for $b$ as a fork diagram for $K$.

The reduced grading $\red{R}$ is computed for a reduced set $\red{\G}$ of Bigelow generators given by omitting a pair of arcs $\alpha_{n}, \beta_{n}$ from the fork diagram (with a mild restriction on the diagram used).  We'll see that the set $\red{\G}$ is in one-to-one correspondence with a set of generators for $\widehat{CF}(\red{\h})$, where $\red{\h}$ is an admissible Heegaard diagram for the manifold $\DBC{K}$ and is obtained from the reduced fork diagram.

\subsection{The reduced Bigelow generators}\label{sec:redbig}

We can define the reduced filtration for fork diagrams of a special type.
\begin{df}\label{def:red}
A \textit{reducible fork diagram} for a knot $K$ is a fork diagram for $K$ with at least four punctures such that
\begin{itemize}
\item $\mu_{2n} \in \alpha_{n} \cap \beta_{n}$ and
\item There exists an arc which avoids $\displaystyle \left( \bigcup_{i=1}^{n-1} \alpha_i \right) \cup \left( \bigcup_{i=1}^{n-1}\beta_i \right)$ and connects $\mu_{2n}$ to a point on the boundary of the unit disk.
\end{itemize}
\end{df}

Notice that a reducible fork diagram exists for any knot $K$, as one can be obtained by performing a Birman stabilization on any braid whose closure is $K$.  We'll define the reduced theory by omitting the pair of arcs $\alpha_{n}$, $\beta_{n}$ from a reducible fork diagram for $K$ (and subsequently omitting the figure-eight $bE_{n}$ from the augmented fork diagram).

Consider a reducible fork diagram for $K$ induced by a braid $b \in \B{n}$.  Denote by $\red{\Ztil}$ the set of intersections $\alpha_{i} \cap \beta_{j}$, where $i,j \leq n-1$.  Similarly, define $\red{\Zcal}$ to be points $\alpha_{i} \cap b E_{j}$, $i,j \leq n-1$.  We then define
\begin{align*}
\underline{\Gtil} = (\alpha_{1} \times \ldots \times \alpha_{n-1}) \cap (\beta_{1} \times \ldots \times \beta_{n-1}) \subset \conf{n-1}, \\
\underline{\G} = (\alpha_{1} \times \ldots \times \alpha_{n-1}) \cap (bE_{1} \times \ldots \times bE_{n-1}) \subset \conf{n-1},
\end{align*}
where $\conf{k}$ denotes the $k$-fold configuration space of $\mathbb{C}$, i.e. the set of unordered $k$-tuples of distinct points in $\mathbb{C}$.  The relationship between $bE_k$ and $\beta_k$ induces a natural projection map $p: \red{\Zcal} \rightarrow \red{\Ztil}$.  Letting $\red{\tau}$ denote the set containing the leftmost $(2n-2)$ punctures, we see that $\red{\tau} \subset \red{\Ztil}$.  If $x \in \red{\tau}$, $p^{-1}(x)$ contains one point.  If $x \in \red{\Zcal} \setminus \red{\tau}$, then $p^{-1}(x) = \{ e_{x}, e'_{x} \}$, a two-point set; we  distinguish between $e_{x} \text{ and } e'_{x}$ by requiring that the loop traveling along a figure-eight from $e_{x} \text{ to } e'_{x}$ and back to $e_{x}$ along an $\alpha$ arc has winding number +1 around the puncture.

\begin{rmk} Via an abuse of notation, we'll often refer to the points corresponding to $x \in \red{\Ztil}$ as $x \in \red{\Zcal}$ (if $x \in \red{\tau}$) or $x,x' \in \red{\Zcal}$ (if $x \in \red{\Ztil} \setminus \red{\tau}$).
\end{rmk}


\subsection{Reduced gradings on the Bigelow generators}\label{sec:redgrad}

We define $\mathbb{Z}$-valued functions $\red{Q}$, $\red{T}$, and $\red{ P}$ on $\red{\G}$; they only vary from their unreduced counterparts $Q$, $T$, and $ P$ found in \cite{et:R} in that the reduced versions ignore the omitted pair $\alpha_n$, $\beta_n$.  For a concrete computation (albeit in the unreduced case), see Section 3.2 of \cite{et:R}.

We'll first establish some notation that will be used throughout this section.  Let $x \in \alpha_i \cap \beta_j$ for $1 \leq i,j \leq n-1$.  Let the loop $\gamma_x:I \rightarrow D$ be the path from $d_j$ to $d_i$ formed by concatenating the following paths in the following order (here ``$-\gamma$'' denotes the reversal of a path $\gamma$):
\begin{enumerate}[(i)]
\item The path $-bh_j$  traveling from $d_j$ to $bh_j(0) \in \beta_j$
\item The section of the path $\pm \beta_j$ traveling from $bh_j(0)$ to $x$
\item The section of the path $\pm\alpha_i$ traveling from $x$ to $h_i(0) = m_i$
\item The path $h_i$ traveling from $m_i$ to $d_i$
\end{enumerate}

Furthermore, for a point $e \in \alpha_i \cap bE_j$, let $\tld{\gamma}_e$ denote the path from $d_j$ to $d_i$ formed analogously, except with the segment of $\pm \beta_j$ replaced with a corresponding segment of the figure-eight $\pm bE_j$.

Given a collection of paths $\gamma_1, \ldots, \gamma_k: I \rightarrow D$ with $\gamma_{i}(t) \neq \gamma_j(t)$  for each $t \in I$ if $i \neq j$, the path $(\gamma_1, \ldots, \gamma_k):I \rightarrow D \times \ldots \times D$ descends to a path in $\text{Conf}^{k}(D)$ by composing with the quotient map $D \times \ldots \times D$.  Futhermore, if the sets $\left\{ \gamma_1(0), \ldots, \gamma_{k}(0) \right\}$ and $\left\{ \gamma_1(1), \ldots, \gamma_{k}(1) \right\}$ coincide, the image is a loop in $\text{Conf}^k(D)$.  Also recall that one can identify the braid group $B_k$ on $k$ strands with the fundamental group of the $k$-fold configuration space $\text{Conf}^k(D)$.  We'll denote by $\pi^{k}: B_k = \pi_1\left(\text{Conf}^k(D)\right) \rightarrow \mathbb{Z}$ the usual abelianisation map.

\subsubsection{The reduced version of $T$}\label{sec:Tdef}
Given a Bigelow generator $\be = e_{1} e_{2} \ldots e_{n-1}  \in \red{\G}$, we have that for each $k$, $p(e_k) = x_k$ for some $x_{k} \in \red{\Ztil}$.  Now for each $k$, let $\gamma_k:=\gamma_{x_k}$ be the path described above.  Then let $\gamma: I \rightarrow \text{Conf}^{n-1}(D)$ be the loop which is the image of $(\gamma_1, \ldots, \gamma_{n-1}):I \rightarrow D \times \ldots \times D$.  We then define the grading $\red{T}$ to be $\red{T}(\be) := \pi^{n-1}(\gamma) \in \mathbb{Z}$.

There's a convenient way to view $\red{T}$ as a relative grading.  Choose two reduced Bigelow generators $\be^1,\be^2 \in \red{\G}$ with images $\bx^1, \bx^2 \in \red{\Gtil}$, and construct a loop in $\text{Conf}^{n-1}(D)$ by traveling from $\bx^1$ to $\bx^2$ along the $\beta$ curves and traveling from $\bx^2$ back to $\bx^1$ along the $\alpha$ curves.  Then the difference $\red{T}(\be^2) - \red{T}(\be^1)$ is the linking number of this loop with the fat diagonal $\nabla \subset \text{Sym}^{n-1}(D)$.  In practice, this is equal to the number of half-twists among the $\beta$-arcs connecting $\bx^1$ to $\bx^2$.  The function $\red{T}:\red{\G} \rightarrow \mathbb{Z}$ is determined by this relative grading information along with the value of $\red{T}(\be)$ for any one generator $\be \in \red{\G}$.

\subsubsection{The reduced version of $Q$}

Consider some $\be =  e_{1} e_{2} \ldots e_{n-1}  \in \red{\G}$.  For each $j$, let $\tld{\gamma}_{k}:=\tld{\gamma}_{e_k}$ be the path defined as above.  Then the path $\left(\mu_1, \ldots, \mu_{2n}, \tld{\gamma}_1, \ldots, \tld{\gamma}_{n-1}\right) : I \rightarrow D \times \ldots \times D$ induces a loop in $\text{Conf}^{3n-1}(D)$ based at $\left\{ \mu_1, \ldots, \mu_{2n}, d_1, \ldots, d_{n-1} \right\}$.  Note that the integer $\pi^{3n-1}\left(\tld{\gamma}\right)$ has the same parity as the integer $\pi^{n-1}(\gamma)$ appearing above in the definition of $\red{T}(\be)$.  We then define the $\red{Q}$-grading by 
$$\red{Q}(\be) := \frac{1}{2} \left(\pi^{3n-1}\left(\tld{\gamma}\right)  - \pi^{n-1}(\gamma) \right) \mathbb{Z}$$

In practice, the function $\red{Q}: \red{\G} \rightarrow \mathbb{Z}$ can be computed additively from a function $Q^{*}:\Zcal \rightarrow \mathbb{Z}$.  Consider some $x \in \alpha_{i} \cap bE_{j}$ ($1 \leq i,j \leq n-1$).  Let $\delta_x$ be the loop based at $d_i$ obtained by concatenating the path $\tld{\gamma}_x$ (as above) with the segment of $\partial D$ traveling from $d_j$ to $d_i$.  Now define $Q^{*}(x)$ be the winding number of the loop $\delta_x$ about the set of punctures $\left\{ \mu_1, \ldots, \mu_{2n} \right\}$.  Then for each $\be =  e_{1} e_{2} \ldots e_{n}  \in \red{\G}$, it is clear that
\begin{equation*}
\red{Q}(\be) := \sum_{i = 1}^{n-1} Q^{*}(e_{i}).
\end{equation*}

\subsubsection{The reduced version of $P$}\label{sec:Pdef}
 
The function $\red{P}$ will be computed additively from $P^{*} : \Zcal \rightarrow \mathbb{Z}$, which measures twice the relative winding number of the tangent vectors to the figure eights $E'_{k}$ at the points in $\Zcal$.
 
For $x \in \Zcal$, where $x \in \alpha_{i} \cap bE_{j}$, we define $ P^{*}(x)$ in the following way. We view the arc $bh_{j}$ as being oriented downward at the point where it intersects $\partial D$.  Let $bE_{j}$ have the orientation induced by the orientation on $E_{j}$ in the standard fork diagram.  Then we let $ P^{*}(x)$ be twice the winding number of the tangent vector relative to the downward-pointing tangent vector at the point $h'_{j} \cap \partial D$.  In other words, if the tangent vector makes $k$ counter-clockwise half-revolutions and $m$ clockwise half-revolutions as we travel first along $bh_{j}$ from $bh_{j} (0)$ to $bh_{j} (1)$ then along $bE_{j}$ to $x$, then we set $ P^{*}(x) = m-k$.  This number is an integer because we assume that at any point $x \in \Zcal$, the figure-eight intersects the $\alpha$ arc at a right angle.

Then for $\be = e_{1} e_{2} \ldots e_{n-1} \in \G$, we define
\begin{equation*}
\red{P}(\be) = \sum_{i = 1}^{n-1}   P^{*}(e_{i}).
\end{equation*}

\subsection{An admissible pointed Heegaard diagram for $\widehat{HF}(\DBC{K})$}\label{sec:redHD}

Now let $P_{\mu} \in \mathbb{C}[t]$ be the monic polynomial whose set of roots is $\tau$, the set of punctures.  We define an affine variety $\widehat{S}$ by
\begin{equation*}
\widehat{S} = \{ (u, z) \in \mathbb{C}^{2} : u^{2} + P_{\mu}(z) = 0 \} \subset \mathbb{C}^{2}.
\end{equation*}
Also , for $k = 1, \ldots , n-1$, define the subspaces $\ah_{k}$ and $\bh_{k}$ of $\widehat{S}$ by
\begin{align*}
\ah_{k} &= \left\{ (u,z) \in \mathbb{C} : z = \alpha_{k}(t),\text{ for some } t\in [0,1]; u = \pm \sqrt{-P_{\mu}(z)} \right\} \text{ and}\\
\bh_{k} &= \left\{ (u,z) \in \mathbb{C} : z = \beta_{k}(t),\text{ for some } t\in [0,1]; u = \pm \sqrt{-P_{\mu}(z)} \right\}.
\end{align*}

Now denote by $\Tahr$ and $\Tbhr$ the totally real tori in $\text{Sym}^{n-1}(\widehat{S})$ defined by
\begin{equation*}
\Tahr = \ah_{1} \times \ldots \times \ah_{n-1} \text{ and } \quad \Tbhr = \bh_{1} \times \ldots \times \bh_{n-1}.
\end{equation*}
The map $\widehat{S} \rightarrow \mathbb{C}$ given by $(u,z) \mapsto z$ is in fact a 2-fold branched covering map, where the branch set downstairs is the set of punctures $\tau$.  Thus (by viewing $S^2$ as $\mathbb{C} \cup \{ +\infty \}$), one can see that in fact $\widehat{S} = \Sigma_{n-1} \setminus \{ \pm \infty \}$, where $\Sigma_{n-1}$ is a closed surface of genus $n-1$.  In Section 4.2 of \cite{et:R} (following \cite{cm:R}), we described the construction of a pointed Heegaard diagram for $\DBCs{K}$ in the unreduced setting - the extra $\SxS$ summand appeared because one was required to stabilize the Heegaard surface.  Since we've omitted a pair of curves here, we don't need to stabilize.

\begin{prop}\label{prop:DBCred}
The collection of data
\begin{equation*}
\red{\h} = (\Sigma_{n-1}; \ah_{1}, \ldots , \ah_{n-1} ; \bh_{1}, \ldots , \bh_{n-1} ; +\infty  )
\end{equation*}
is an admissible pointed Heegaard diagram for $\DBC{K}$.
\end{prop}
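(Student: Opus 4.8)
The plan is to verify the three requirements in the definition of an admissible pointed Heegaard diagram for $\DBC{K}$: first, that $\red{\h}$ is a valid Heegaard diagram of genus $n-1$ for \emph{some} closed oriented $3$-manifold; second, that this $3$-manifold is in fact $\DBC{K}$; and third, that the diagram (with basepoint $+\infty$) satisfies the relevant weak admissibility condition for the $\text{Spin}^{c}$ structures that arise. The starting point is the corresponding unreduced statement, Proposition 22 of \cite{et:R}, which asserts that the full diagram $\h = (\Sigma_{n}; \ah_{1}, \ldots, \ah_{n}; \bh_{1}, \ldots, \bh_{n}; z)$ is an admissible pointed Heegaard diagram for $\DBCs{K}$. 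The reducibility hypothesis $\mu_{2n} \in \alpha_n \cap \beta_n$ is exactly the condition that lets us excise the arcs $\alpha_n, \beta_n$ cleanly, and one should track how each feature of $\h$ degenerates under this excision.

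First I would check that $\Sigma_{n-1}$ is the correct underlying surface. The point is the remark already made in the excerpt: $\widehat{S}$ is $\Sigma_{n-1} - \{\pm\infty\}$, and because a matched $\alpha$/$\beta$ pair was removed there is no need to stabilize, so we genuinely get a genus-$(n-1)$ closed surface rather than one needing a handle to accommodate a free basepoint. I would then argue that $\{\ah_1, \ldots, \ah_{n-1}\}$ and $\{\bh_1, \ldots, \bh_{n-1}\}$ are each a collection of $n-1$ disjoint simple closed curves that are linearly independent in $H_1(\Sigma_{n-1})$ — this follows from the analogous independence statement for the full $\ah$-curves and $\bh$-curves on $\Sigma_n$ in \cite{et:R}, together with the observation that $\ah_n$ (resp. $\bh_n$) was the lift corresponding to the removed arc and its removal does not disturb independence of the rest. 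Since $\Tahr$ and $\Tbhr$ are the associated tori in $Sym^{n-1}(S)$, this shows $\red{\h}$ is a Heegaard diagram for a closed oriented $3$-manifold.

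Next, to identify that $3$-manifold with $\DBC{K}$, I would use the branched-double-cover description directly: the unreduced diagram presents $\DBC{K} \# (S^2 \times S^1)$, with the $S^2 \times S^1$ summand produced precisely by the stabilizing handle forced by the basepoint, equivalently by the extra matched pair of curves $\ah_n, \bh_n$ with $|\ah_n \cap \bh_n| = 1$. Removing that handle — i.e. passing from $\Sigma_n$ to $\Sigma_{n-1}$ and dropping $\ah_n, \bh_n$ — performs the connected-sum decomposition in reverse and yields exactly $\DBC{K}$. Concretely this is Manolescu's reduced construction from \cite{cm:R}, so the bulk of this step is citing that identification and checking the fork-diagram bookkeeping (the map from reduced Bigelow generators $\red{\G}$ to intersection points $\Tahr \cap \Tbhr$) matches up with the double-branched-cover Heegaard diagram. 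Finally, for admissibility: since $\DBC{K}$ is a rational homology sphere every $\mathfrak{s}$ is torsion, so weak and strong admissibility coincide for the relevant classes, and I would deduce weak admissibility of $\red{\h}$ from that of $\h$ — any periodic domain in $\red{\h}$ would, after re-adding the trivial handle, give a periodic domain in $\h$ with the same regions and multiplicities (the handle contributes nothing since $\ah_n \cdot \bh_n$ is a single point), so positivity would be inherited.

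The main obstacle I expect is the middle step — pinning down that the manifold is genuinely $\DBC{K}$ and that the basepoint $+\infty$ sits in the correct region. It is intuitively clear that excising a canceling handle undoes a connected sum with $S^2 \times S^1$, but making this precise requires carefully matching the combinatorics of the reduced fork diagram (the curves $\ah_k, \bh_k$ coming from lifts of $\alpha_k, bE'_k$ for $k \le n-1$) against Manolescu's branched-cover Heegaard diagram, and checking that the puncture at $+\infty$ corresponds to the basepoint region placement used there. Once that dictionary is set up, everything else is essentially inherited from \cite{et:R} and \cite{cm:R}.
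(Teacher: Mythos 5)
Your overall strategy --- relate $\red{\h}$ to the known unreduced diagram for $\DBCs{K}$ and peel off the $\SxS$ summand --- is the right one and is also the paper's. But the step you flag as ``intuitively clear'' is exactly where the content lies, and the picture you use to justify it is not correct. You assert that the $\SxS$ summand of the unreduced diagram is produced by ``the extra matched pair of curves $\ah_{n},\bh_{n}$ with $|\ah_{n}\cap\bh_{n}|=1$,'' so that deleting the handle together with $\ah_{n},\bh_{n}$ reverses a connected sum. Two problems. First, a stabilizing pair meeting in a single point yields a trivial stabilization (an $S^{3}$ summand), not $\SxS$; the standard genus-one diagram for $\SxS$ has $\alpha_{0}$ and $\beta_{0}$ parallel and disjoint. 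Second, and more seriously, $\ah_{n}$ and $\bh_{n}$ are not localized in the extra handle: in the construction from \cite{cm:R}, $\ah_{n}$ is the lift of the arc $\alpha_{n}$ and decomposes as $\ah_{n}=\alpha_{n}'\#\alpha_{0}$, where $\alpha_{n}'$ lives in $\Sigma_{n-1}$ and satisfies $[\alpha_{n}']=[\ah_{1}]+\cdots+[\ah_{n-1}]$. So one cannot simply erase $\ah_{n},\bh_{n}$ and destabilize.

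The missing idea is the homological/handleslide argument that the paper supplies. Because $[\ah_{n}]=[\ah_{1}]+\cdots+[\ah_{n-1}]+[\alpha_{0}]$ in $H_{1}(\Sigma_{n})$, a sequence of handleslides and isotopies avoiding $\pm\infty$ carries $\{\ah_{1},\ldots,\ah_{n}\}$ to $\{\ah_{1},\ldots,\ah_{n-1},\alpha_{0}\}$, and analogously for the $\bh$-curves; after these moves the unreduced diagram is literally $\red{\h}$ connect-summed (near the basepoint) with the standard $\SxS$ diagram, which is what identifies the manifold presented by $\red{\h}$ as $\DBC{K}$. This gap also undercuts your admissibility transfer, which leans on the same ``single intersection point in the handle'' picture; in the paper admissibility is instead handled by an argument analogous to Proposition 7.4 of \cite{cm:R} (and, once the manifold is known to be a rational homology sphere, the periodic-domain issue is essentially vacuous).
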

\begin{proof}

%

We first check that $\red{\h}$ actually represents $\DBC{K}$.  It suffices to show that
\begin{equation*}
\h' = \left(\Sigma_{n} = \Sigma_{n-1} \# \Sigma_{1}; \ah_{1}, \ldots , \ah_{n-1}, \alpha_{0} ; \bh_{1}, \ldots , \bh_{n-1}, \beta_{0} ; +\infty\right)
\end{equation*}
gives a pointed Heegaard diagram for $\DBCs{K}$, where $\left(\Sigma_{1}; \alpha_{0}; \beta_{0}; +\infty\right)$ is the standard pointed Heegaard diagram for $S^{2} \times S^{1}$ shown in Figure \ref{fig:s2s1hd}.  We accomplish this by showing that $\h$ can be obtained from $\h'$ by a sequence of handleslides, where 
$$\h = \left( \Sigma_n; \ah_1, \ldots, \ah_n; \bh_1, \ldots, \bh_{n}; +\infty \right)$$
is the pointed Heegaard diagram for $\DBCs{K}$ obtained as the stabilized double branched cover of the unreduced fork diagram as in \cite{et:R}.

\begin{figure}[h!]
\centering
\begin{minipage}[c]{.35\linewidth}
\labellist 
\small
\pinlabel* {$\alpha_{0}$} at 275 30
\pinlabel* {$\beta_{0}$} at 235 190
\pinlabel* {\reflectbox{$a$}} at 272 117
\pinlabel* {$a$} at 120 117
\pinlabel* {$+\infty$} at 70 175
\endlabellist 
\includegraphics[height = 30mm]{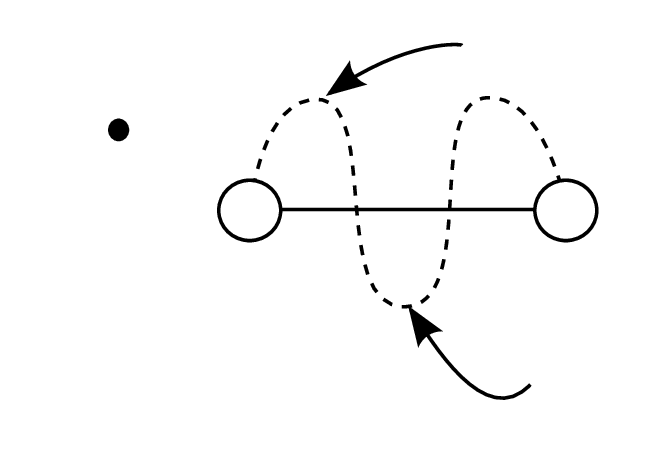}
\end{minipage}
\begin{minipage}[c]{.55\linewidth}
\caption{A pointed Heegaard diagram for $S^{2} \times S^{1}$ \label{fig:s2s1hd}}
\end{minipage}
\end{figure}

The set of attaching circles $\ah_{1}, \ldots, \ah_{n-1},\alpha_0$ can be obtained via a sequence of handleslides and isotopies (avoiding the basepoint $+\infty$) from the set $\ah_{1}, \ldots, \ah_{n}$.  Indeed, one should slide $\ah_{n}$ over $\ah_{n-1}$, then slide the resulting curve over $\ah_{n-2}$, and so on  (for a total of $n-1$ such handleslides), to arrive at $\ah_{1}, \ldots, \ah_{n-1},\alpha_0$.  This process is illustrated in Figure \ref{fig:alphas}.  As a result, $\beta_{i}$ is obtained from $\alpha_i$ (for each $i$) via the diffeomorphism induced by the braid, $\bh_{1}, \ldots, \bh_{n}$ can be obtained via an analogous sequence of handleslides and isotopies from $\bh_{1}, \ldots, \bh_{n-1}, \beta_{0}$.

\begin{figure}[h!]
\centering

\subfigure[]{
\labellist 
\small
\pinlabel* {$a$} at 43 140
\pinlabel* {$b$} at 145 140
\pinlabel* {$c$} at 242 140
\pinlabel* {$\ah_1$} at 30 100
\pinlabel* {$\ah_2$} at 95 155
\pinlabel* {$\ah_3$} at 200 155
\pinlabel* {\rotatebox{180}{\reflectbox{$a$}}} at 45 65
\pinlabel* {\rotatebox{180}{\reflectbox{$b$}}} at 145 65
\pinlabel* {\rotatebox{180}{\reflectbox{$c$}}} at 240 65
\endlabellist
\includegraphics[height = 32mm]{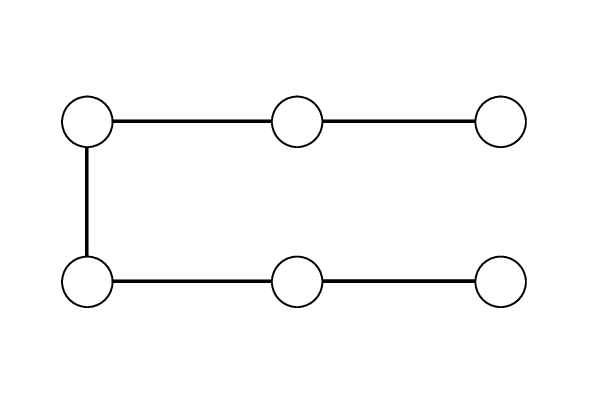}}\quad
\subfigure[]{
\labellist 
\small
\pinlabel* {$a$} at 43 140
\pinlabel* {$b$} at 145 140
\pinlabel* {$c$} at 242 140
\pinlabel* {\rotatebox{180}{\reflectbox{$a$}}} at 45 65
\pinlabel* {\rotatebox{180}{\reflectbox{$b$}}} at 145 65
\pinlabel* {\rotatebox{180}{\reflectbox{$c$}}} at 240 65
\endlabellist
\includegraphics[height = 32mm]{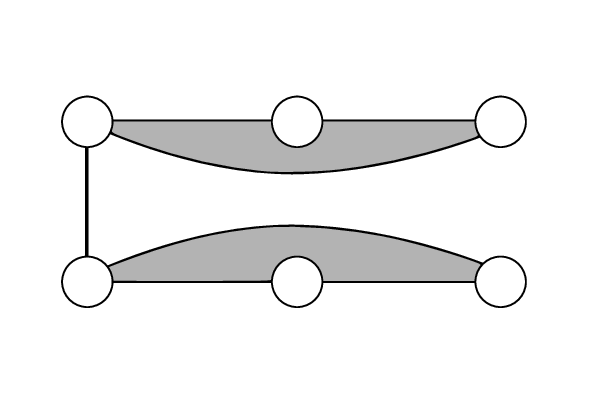}}\quad
\subfigure[]{
\labellist 
\small
\pinlabel* {$a$} at 45 142
\pinlabel* {$b$} at 145 142
\pinlabel* {$c$} at 243 142
\pinlabel* {\rotatebox{180}{\reflectbox{$a$}}} at 45 67
\pinlabel* {\rotatebox{180}{\reflectbox{$b$}}} at 145 63
\pinlabel* {\rotatebox{180}{\reflectbox{$c$}}} at 243 67
\pinlabel* {$\alpha_0$} at 190 155
\endlabellist 
\includegraphics[height = 32mm]{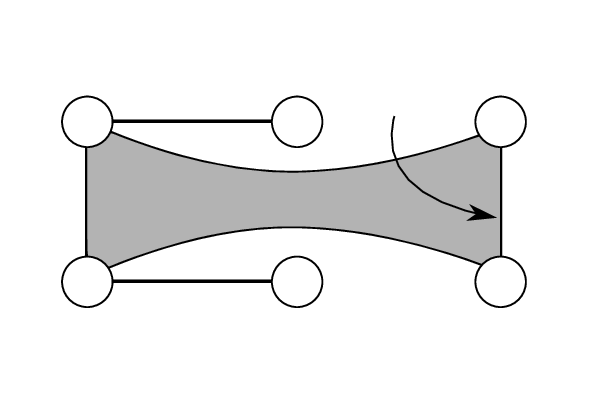}}
\caption[Handleslides appearing in the proof of Proposition \ref{prop:DBCred}]{Handleslides connecting the set $\{ \ah_1, \ah_2, \ah_3 \}$ to $\{ \ah_1, \ah_2, \alpha_0 \}$ in the proof of Proposition \ref{prop:DBCred}.  Pairs of pants are shaded.
\label{fig:alphas}}
\end{figure}

Admissibility of the diagram $\h'$ follows by mimicking the argument in the proof of Proposition 7.4 from \cite{cm:R}; when examining a periodic domain $\mathcal{P}$, one exploits the involution on the regions of $\displaystyle\Sigma_{(n-1)} \setminus \left( \bigcup_{i=1}^{(n-1)} \ah_{i}  \right) \setminus \left( \bigcup_{i=1}^{(n-1)}\bh_i\right)$ (induced by the involution on the Heegaard surface) to find positive and negative coefficients in the expansion of $\mathcal{P}$.
\end{proof}

\begin{rmk}
With respect to the branched covering map $\widehat{S} \rightarrow \mathbb{C}$, each puncture $\mu_{k} \in \mathbb{C}$ has a single point as its preimage.  However, the preimage of a point $x \in \beta_{j} \cap \text{int}(\alpha_{i})$ consists of a pair of points upstairs.  This suggests an identification between the intersection $\Tahr \cap \Tbhr$ and the set $\red{\G}$.  However, this identification isn't canonical, since for some $x \in \red{\Ztil} \setminus \red{\tau}$ it is only required that the pair $\{ e_{x}, e'_{x} \}$ is identified with the two preimages of $x$ upstairs.  Still, it is straightforward to check that $\left(\red{P}-\red{Q}\right)$ and $\red{T}$ descend to functions $\red{\Gtil} \rightarrow \mathbb{Z}$ and thus give well-defined functions $\Tahr \cap \Tbhr \rightarrow \mathbb{Z}$.
\end{rmk}

\subsection{Gradings on reduced tori}\label{sec:redmas}

Recall that a system of coordinates on $\text{Sym}^{(n-1)}(\widehat{S})$ is given by $\left\{ (u_1,z_1), (u_2,z_2), \ldots, (u_{(n-1)},z_{(n-1)}) \right\}$ where $u_i^2 + P_{\mu}(z_i) = 0$. Define the \emph{anti-diagonal} by the submanifold $\AD \subset \text{Sym}^{(n-1)}(\widehat{S})$ given by
$$ \AD := \left\{  \left\{ \left(u_i,z_i\right)_i \right\} \quad \text{where $z_j = z_k$ and $u_j = - u_k$ for some $j \neq k$}   \right\},$$
and let $\red{W} = \text{Sym}^{(n-1)}(\widehat{S}) \setminus \AD$.   One can lift the relative Maslov grading on $\Tahr \cap \Tbhr \subset \red{W}$ to an absolute $\mathbb{Z}$-valued grading.  This lift is achieved by first choosing a particular $\mathbb{C}$-valued holomorphic volume form on $\red{W}$ and using this volume form to improve $\Tahr$ and $\Tbhr$ to \emph{graded submanifolds}.  From this pair of graded submanifolds, one obtains an absolute Maslov grading on the set $\Tahr \cap \Tbhr$.  This construction was introduced by Seidel in \cite{s:GL} and reviewed in Section 4.2 of \cite{et:R}.  By comparing this situation to that in the unreduced case, we obtain the following.

\begin{prop}\label{prop:cmRred}
There exists a volume form $\red{\Theta}$ on $\red{W}$ inducing Seidel gradings on $\Tahr$ and $\Tbhr$ in the sense of Section 4.1 of \cite{et:R}, and the resulting absolute Maslov grading on $\Tahr \cap \Tbhr \subset \red{W}$ is exactly $\red{ P} - \red{Q} + \red{T}$.
\end{prop}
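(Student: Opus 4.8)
The plan is to mirror verbatim the argument of Proposition 23 from \cite{et:R}, which constructed the volume form $\Theta$ on $W = Sym^{n}(\widehat{S}) - \bigtriangledown$ inducing Seidel gradings on $\Ta$ and $\Tb$ and identified the resulting Maslov grading with $P - Q + T$. The only difference here is that $n$ has been replaced by $n-1$ and the tori $\Tahr, \Tbhr$ are products of only the first $n-1$ lifted arcs inside $Sym^{n-1}(\widehat{S})$; none of the structural features used in that proof depended on the particular value of $n$. First I would recall the construction of the meromorphic volume form on $Sym^{n-1}(\widehat{S})$ coming from the product of the standard form on each factor $\widehat{S}$ (built from the polynomial $P_{\mu}$ as in Section \ref{sec:bighf}), check that it extends to a nowhere-zero holomorphic form $\red{\Theta}$ after deleting the anti-diagonal $\bigtriangledown$, and verify that it is invariant under the relevant symmetry so that it descends appropriately — this is exactly the local computation carried out in \cite{et:R}, now with one fewer symmetric factor.

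Next I would check that $\red{\Theta}$ induces Seidel gradings on the totally real tori $\Tahr$ and $\Tbhr$ in the sense of Section 4.1 of \cite{et:R}. Since $\Tahr = \ah_{1} \times \cdots \times \ah_{n-1}$ and $\Tbhr = \bh_{1} \times \cdots \times \bh_{n-1}$ are products of the same lifted arcs $\ah_{k}, \bh_{k}$ used in the unreduced case (merely omitting the index $n$), the phase-function computation that shows the restriction of $\red{\Theta}^{\otimes 2}$ to each torus has a well-defined square root, hence a grading, goes through unchanged factor by factor. The absolute Maslov grading of a point $\bx \in \Tahr \cap \Tbhr$ is then computed, as in \cite{et:R}, by comparing the two Seidel gradings via a Maslov index / winding number count; because the reduced functions $\red{Q}, \red{T}, \red{P}$ are defined by exactly the same local recipes as $Q, T, P$ (as stated in Section \ref{sec:redbig}), the bookkeeping yields $\red{P} - \red{Q} + \red{T}$ summand by summand.

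The main obstacle I anticipate is not any single hard estimate but rather ensuring that the deletion of the arc pair $\alpha_{n}, \beta_{n}$ — equivalently, working in $Sym^{n-1}$ of the \emph{un-stabilized} surface $\widehat{S} = \Sigma_{n-1} - \{\pm\infty\}$ rather than the stabilized $\Sigma_{n}$ — does not disturb the compatibility between the complex volume form and the basepoint/puncture structure. Concretely, one must confirm that the reducibility hypothesis ($\mu_{2n} \in \alpha_{n} \cap \beta_{n}$) is precisely what guarantees that the removed factor contributes trivially to all three quantities, so that the reduced grading formula is the honest restriction of the unreduced one and not merely a formula with an unaccounted constant. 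Once this is checked, the remainder is a routine transcription: every lemma invoked in the proof of Proposition 23 of \cite{et:R} has a reduced analogue obtained by the same index substitution, and the conclusion follows. I would therefore present the proof as a short paragraph asserting that the argument of Proposition 23 from \cite{et:R} applies mutatis mutandis, with the substitution $n \mapsto n-1$ and the arcs $\alpha_{n}, \beta_{n}$ omitted, and flag the reducibility condition as the point that makes the substitution legitimate.
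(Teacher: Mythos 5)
Your proposal matches the paper's approach exactly: the paper proves Proposition \ref{prop:cmRred} simply by asserting that the argument is identical to that of Proposition 23 from \cite{et:R}, applied with one fewer symmetric factor and the pair $\alpha_{n}, \beta_{n}$ omitted. Your additional remarks about verifying that the reducibility hypothesis makes the omitted factor harmless are a reasonable elaboration of the same transcription, not a different route.
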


\begin{proof}
Consider the $\mathbb{C}$-valued form $\red{\Theta}$ on $\text{Sym}^{(n-1)}(\widehat{S})$ given by 
\begin{equation}\label{eqn:vol}
\red{\Theta} := \prod_{1 \leq i < j \leq (n-1)} (z_i - z_j) \prod_{k = 1}^{(n-1)} \frac{dz_k}{u_k}.
\end{equation}

In the proof of Proposition 7.4 in \cite{cm:R}, Manolescu introduced a form $\Theta$ on $\text{Sym}^{n}(\widehat{S})$, and in fact
\begin{equation}\label{eqn:theta}
\Theta =\red{\Theta} \wedge  \left( \prod_{i = 1}^{(n-1)} (z_i - z_n) \right) \frac{dz_n}{u_n}.
\end{equation}

Via an argument analogous to that in \cite{cm:R}, one can see that $\red{\Theta}$ in fact induces a volume form on $\red{W}$.  Let $\tld{R}$ denote the absolute Maslov grading on $\Tah \cap \Tbh \subset W$ induced by $\Theta$ as in Proposition 4.3.1 from \cite{et:R}, and let $\red{\tld{R}}$ denote the one induced on $\Tahr \cap \Tbhr \subset \red{W}$ by $\red{\Theta}$.  Then with Equation \ref{eqn:theta} in mind, one can see that if $\red{\bx} \in \Tahr \cap \Tbhr$ and $\bx := \red{\bx}x_n \in  \Tah \cap \Tbh$,
$$ \tld{R}(\bx) - \red{\tld{R}}(\red{\bx}) = \left( P-Q+T\right)(\bx) - \left( \red{P}-\red{Q}+\red{T}\right)(\red{\bx}).$$
Since we have from \cite{et:R} that $\tld{R} = P - Q +T$, the result follows.
\end{proof}

We then define $\red{R}$ via a shift which depends on the signed count of braid generators in $b$ and the writhe of the diagram $D$ which is its plat closure.
\begin{equation*}
\red{R} = \red{\tld{R}} + s_{\red{R}}(b,D),\text{ where } s_{\red{R}}(b,D) = \frac{\epsilon(b) - w(D) - 2(n-1)}{4} \in \mathbb{Q}.
\end{equation*}

Notice that the second condition in Definition \ref{def:red} guarantees that $\pm\infty$ lie in the same component of $\displaystyle\Sigma_{(n-1)} \setminus \left( \bigcup_{i=1}^{(n-1)} \ah_{i} \right) \setminus \left( \bigcup_{i=1}^{(n-1)} \bh_{i} \right).$
Therefore, if $\bx, \by \in \Tahr \cap \Tbhr$, and $\phi \in \pi_{2}(\bx, \by)$, then $n_{+\infty}(\phi)=0$ if and only if $n_{-\infty}(\phi)=0$

When viewed as a grading, the function $\red{R}$ is also not compatible with Maslov index calculations in the  entire symmetric product.  By mimicking the argument used in the unreduced case, one can see that the volume form $\red{\Theta}$ has an order-one zero along the anti-diagonal $\AD \subset \text{Sym}^{(n-1)}(\widehat{S})$.  Therefore, if $\bx, \by \in \Tahr \cap \Tbhr$, and $\phi \in \pi_{2}(\bx, \by)$ with $n_{+\infty}(\phi)=0$, then
\begin{equation*}\label{eqn:diff}
\red{R}(\bx) - \red{R}(\by) = \mu(\phi)+ 2[\phi]\cdot[\AD].
\end{equation*}
Recall that for every knot $K$, the manifold $\DBC{K}$ is a rational homology sphere.  All elements of $\text{Spin}^{c}(\DBC{K})$ are then torsion, and thus the absolute grading $\tld{gr}$ can be defined for all generators in $\red{\h}$.  One then can obtain a filtration $\red{\rho}$ on the complex $\widehat{CF}(\red{\h})$ via
\begin{equation*}
\red{\rho}(\bx) = \red{R}(\bx) - \tld{gr}(\bx).
\end{equation*}

\subsection{The anti-diagonal and covering Heegaard diagrams}

When analyzing Heegaard moves below, we'll encounter intermediate Heegaard diagrams which don't inherently cover fork diagrams.  We'll need to define a new type of decorated Heegaard diagram.  Fix $g$, and let $\left\{ \mu_1, \mu_2, \ldots \mu_{2n} \right\} \subset D \subset \mathbb{C}$ be $2n$ distinct points evenly spaced along $(-1,1) \subset \mathbb{R} \subset \mathbb{C}$. 

\begin{df}\label{def:chd}
A \emph{covering Heegaard diagram} (CHD) of genus $g$ is a tuple of data $\left( \Sigma; \ba; \bb; z_+; z_-; \pi\right)$, where $\mathcal{H} = \left( \Sigma; \ba; \bb; z_+ \right)$ is a pointed Heegaard diagram of genus $g$ and where $\pi: \Sigma \setminus \left\{ z_+, z_- \right\} \rightarrow \mathbb{C}$ is a two-fold branched covering map with branch set $\left\{ \mu_1, \ldots, \mu_{2n} \right\} \subset \mathbb{C}$.  Two such diagrams $\left( \Sigma; \ba; \bb; z_+; z_-; \pi\right)$ and $\left( \Sigma'; \ba'; \bb'; z'_+; z'_-; \pi'\right)$are \emph{covering diffeomorphic} if there is a diffeomorphism $f:\left(  \Sigma; \ba; \bb; z_+; z_-\right) \rightarrow \left(  \Sigma'; \ba'; \bb'; z'_+; z'_-\right)$ with $\pi = f \circ \pi'$.
\end{df}

We saw in Section \ref{sec:redHD}, CHDs arise naturally as the branched covers of fork diagrams, but not every CHD arises in this way.  Each CHD gives rise to an associated \emph{anti-diagonal} $\nabla \subset \text{Sym}^{k}(\Sigma)$ given by
$$ \nabla := \left\{ \left\{ x_1, x_2, \ldots, x_k \right\} | \pi(x_i) = \pi(x_j) \text{ for some } i \neq j \right\}$$
A diffeomorphism between two CHDs induces a diffeomorphism between the associated anti-diagonals.  Fix a CHD $\left( \Sigma; \ba; \bb; z_+; z_-; \pi\right)$.  It isn't hard to verify that given $x,y \in \Ta \cap \Tb$ and a 2-gon class $\phi \in \pi_2(x,y)$,  the intersection number $[\phi] \cdot [\nabla] \in \mathbb{Z}$ depends only on the covering diffeomorphism type of the CHD.

There is an obvious generalization of Definition \ref{def:chd} to Heegaard multi-diagrams which we won't explicitly state.  Multi-CHDs induce anti-diagonals, the above observation about intersection numbers still holds for $n$-gon classes in general.

A common situation below will consist of a pair of CHDs $\h$ and $\h'$ which cover fork diagrams and are connected by a sequence of pointed isotopies and pointed handleslides passing through intermediate CHDs which don't cover fork diagrams in any obvious way.  When studying such sequences of diffeomorphism classes of CHDs, we'll always pick representatives which have precisely the same $z_{\pm}$ and precisely the same standard projection map $\pi: \Sigma\setminus \left\{ z_+, z_-\right\} \rightarrow \mathbb{C}$ - namely, the one depicted in (FIGURE).  This will allow us to effectively keep track of the anti-diagonal when studying 3-gons associated to Heegaard moves - $\nabla$ won't change in our pictures of surfaces, but the $\alpha$ and $\beta$ circles will.

\begin{figure}[h!]
\centering
\subfigure[A fork diagram cut along dashed arcs (the unit disk has been elongated for readability)]{
\includegraphics[height = 30mm]{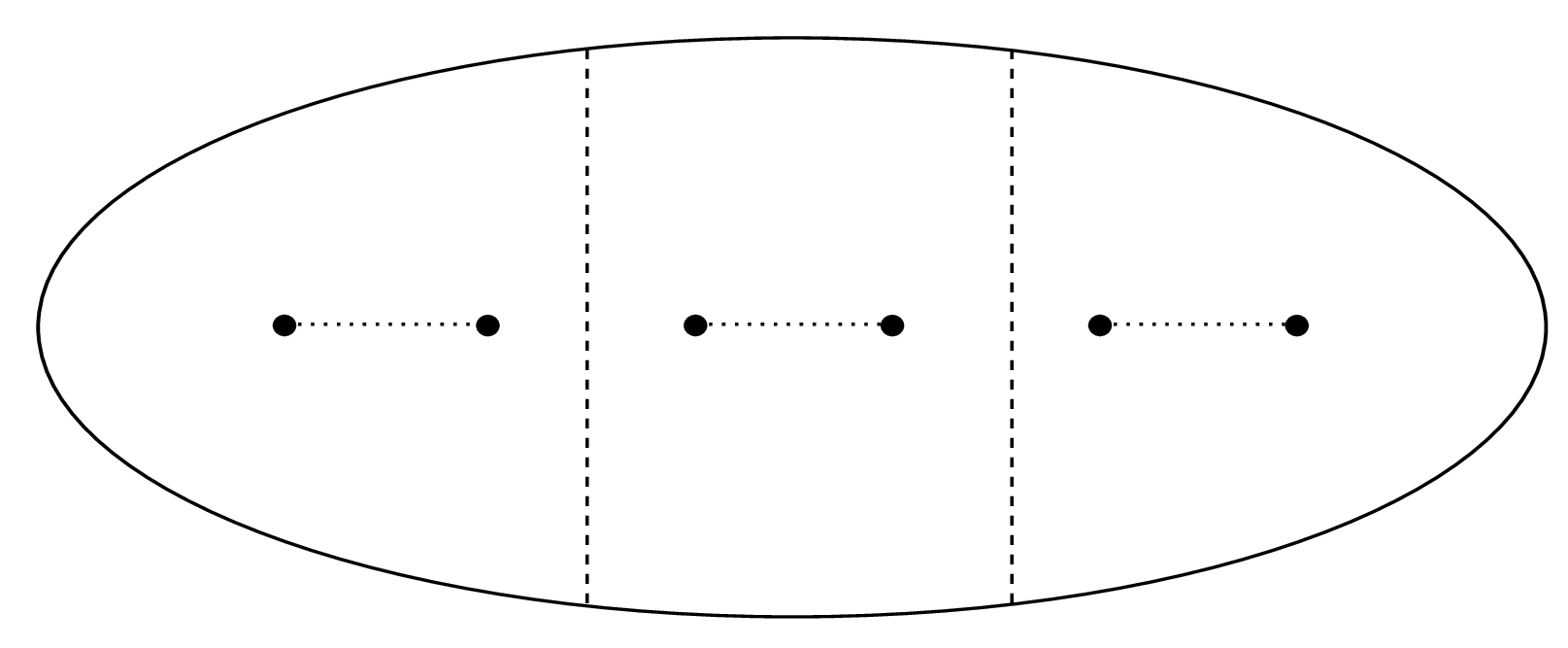}\label{fig:forkcut}}\\
\subfigure[The annuli covering the three pieces from Figure \ref{fig:forkcut}]{
\includegraphics[height = 30mm]{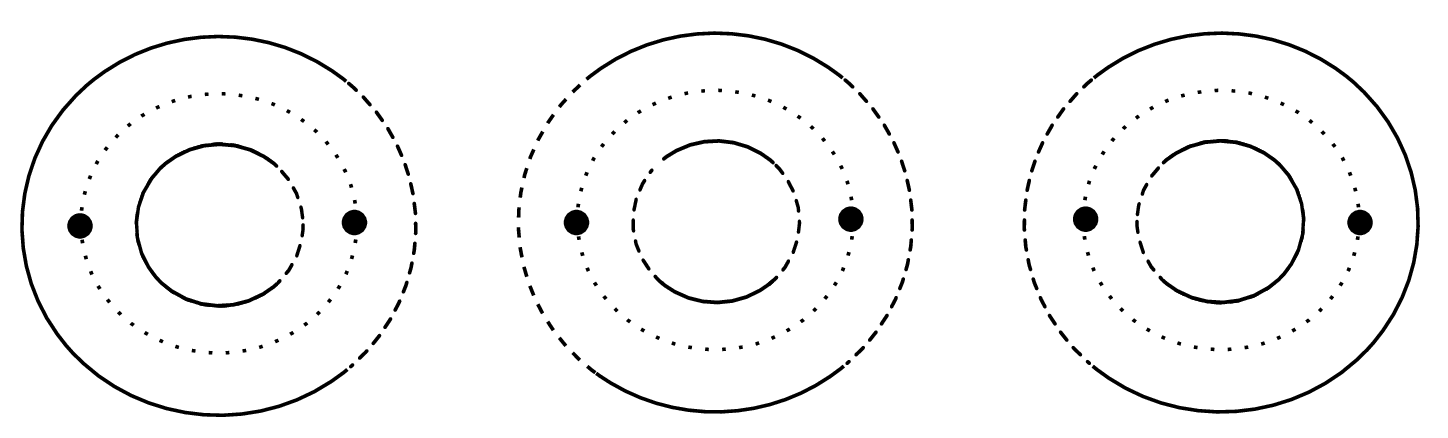}\label{fig:hdcut}}\\
\subfigure[The glued-up genus-two surface]{
\labellist 
\small
\pinlabel* $a$ at 310 220
\pinlabel* \rotatebox{180}{\reflectbox{$a$}} at 310 95
\pinlabel* $b$ at 490 220
\pinlabel* \rotatebox{180}{\reflectbox{$b$}} at 490 95
\endlabellist 
\includegraphics[height = 30mm]{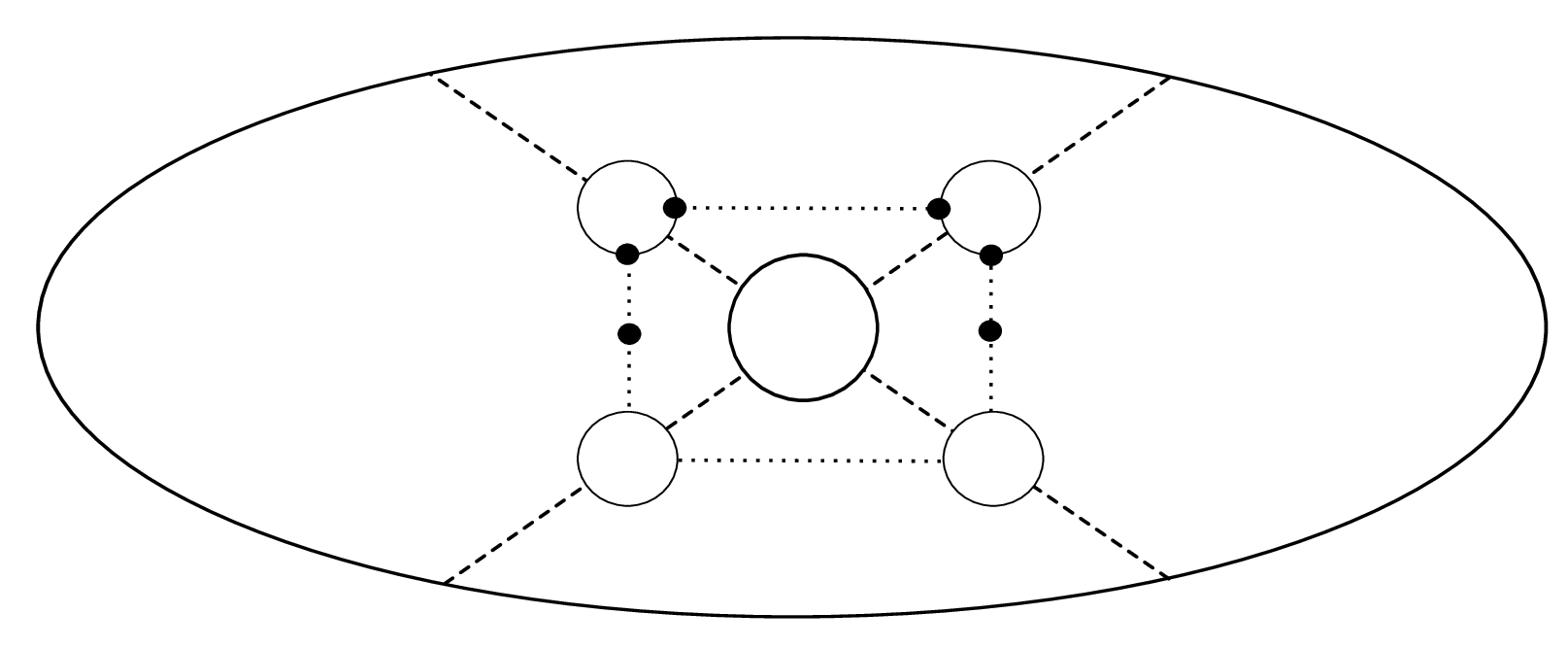}\label{fig:hdglue}}
\caption[A standard branched covering map]{A description of the standard branched covering map.}
\end{figure}

Notice that if we insist on fixing $\pi$, many isotopies of attaching circles aren't \emph{a priori} realizable as covering diffeomorphisms.  However, we can still analyze these isotopies by studying 3-gon classes, as we do with handleslides.

\subsubsection{Thin 3-gon classes}

We dispense with a type of 3-gon class which will arise often.  Assume that $\ba$, $\bb$, and $\bb'$ be n-tuples of attaching curves on $\Sigma$ such that $\bb'$ differs from $\bb$ by a pointed handeslide or pointed isotopy.  For $\bx \in \Ta \cap \Tb$ and $\by \in \Ta \cap \tor{\bb'}$, let $\psi \in \pi_{2}(\bx,\thet{\bb,\bb'},\by)$ be a 3-gon class avoiding the basepoint with $\mu(\psi) = 0$, where the domain $\mathcal{D}(\psi)$ is a sum of $n$ disjoint 3-sided regions $\mathcal{D}_1, \ldots, \mathcal{D}_n$.  A point in $\text{Im}(\psi) \subset \text{Sym}^n(\Sigma)$ is of the form $\bx = \{ x_1, \ldots, x_n\}$, where each $x_i \in \mathcal{D}_i$.  If we further assume that at least $n-1$ of the regions $\mathcal{D}_i$ are thin 3-sided regions of the type in Figure \ref{fig:smalltri}, then $\pi^{-1}(\pi(\mathcal{D}_{i})$ will be the disjoint union of two thin 3-sided regions and so
\begin{equation*}\label{eqn:adtri}
\mathcal{D}_i \cap \pi^{-1} \left( \pi \left( \mathcal{D}_j \right) \right) = \emptyset \quad \text{for} \quad i \neq j.
\end{equation*}
As a result, $\text{Im}(\psi) \cap \nabla = \emptyset$.

\begin{figure}[h!]
\centering
\begin{minipage}[c]{.38\linewidth}
\labellist
\small
\pinlabel* {$\theta_{\bb \bb'}$} at 84 94
\pinlabel* {$x_i$} at 240 50
\pinlabel* {$y_i$} at 240 140
\endlabellist
\includegraphics[height = 20mm]{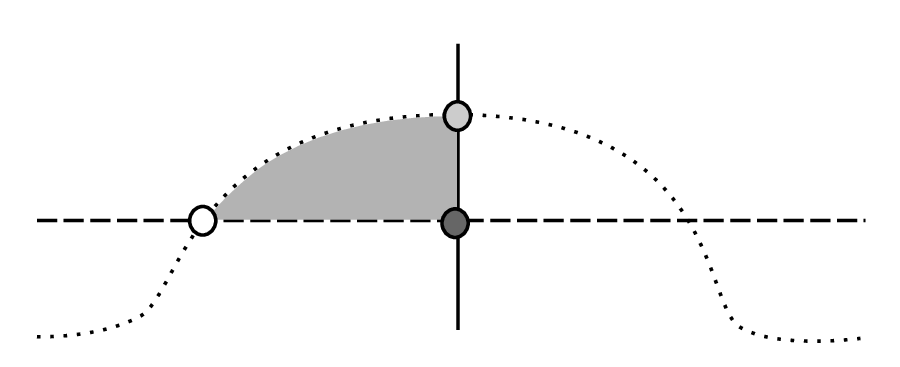}
\end{minipage}
\begin{minipage}[c]{.60\linewidth}
\caption[A small triangle]{A small 3-sided region appearing in a small isotopy}
\label{fig:smalltri}
\end{minipage}
\end{figure}

\subsection{Triangle injections}
We define some auxiliary maps which will be useful for comparing $\red{\rho}$-filtered chain homotopy types CHDs which cover fork diagrams.
\begin{df}\label{df:trimap}
Let $\left(\Sigma; \ba; \bb; z_{\pm}; \pi\right)$ and $\left(\Sigma; \ba'; \bb';z_{\pm}; \pi \right)$ be two admissible CHDs of genus $n$ appearing in some sequence of pointed isotopies and handleslides connecting two CHDs which cover fork diagrams, and let $\AD \subset \text{Sym}^n(\Sigma)$ denote the anti-diagonal.
\begin{enumerate}[(i)]
\item If $\bb' = \bb$ and $\ba'$ differs from $\ba$ by a pointed isotopy or pointed handleslide, then a $\ba$\textit{-triangle injection} is a function $g: \Ta \cap \Tb \hookrightarrow \tor{\ba'} \cap \Tb$ such that the following hold:
\begin{enumerate}[(a)]
\item There is an admissible triple-CHD $\left(\Sigma; \ba^{+}; \ba; \bb; z_{\pm}; \pi\right)$ (where $\ba^+$ differs from $\ba'$ by a small pointed isotopy and for each $k$, $\alpha^{+}_k$ intersects $\alpha_k$ transversely in two points) such that for each $\bx \in \Ta \cap \Tb$, there is a 3-gon class $\psi_{g}^{+} \in \pi_{2}(\thet{\ba^{+}\ba}, \bx, \by^{+})$ with $\mu(\psi_{g}^{+}) = 0$, $\psi_{g}^{+} \cap \AD = \emptyset$, and $n_{z}(\psi_{g}^{+}) = 0$, where $\by^{+} \in \Ta \cap \tor{\ba^{+}}$ is the nearest neighbor to $g(\bx)$.
\item There is an admissible triple-CHD $\left(\Sigma; \ba; \ba^{-}; \bb; z_{\pm}; \pi\right)$ (where for each $k$, $\alpha_{k}^{-}$ differs from $\alpha'_{k}$ by a small pointed isotopy and for each $k$, $\alpha^{-}_k$ intersects $\alpha_k$ transversely in two points)  such that for each $\bx \in \Ta' \cap \Tb$, there is a 3-gon class $\psi_{g}^{-} \in \pi_{2}(\thet{\ba\ba^{-}}, \by^{-}, \bx)$ with $\mu(\psi_{g}^{-}) = 0$, $\psi_{g}^{-} \cap \AD = \emptyset$, and $n_{z}(\psi_{g}^{-}) = 0$, where $\by^{-} \in \Ta \cap \tor{\ba^{-}}$ is the nearest neighbor to $g(\bx)$.
\end{enumerate}
\item If $\ba' = \ba$ and $\bb'$ differs from $\bb$ by a pointed isotopy or pointed handleslide, then a $\bb$\textit{-triangle injection} is a function $g: \Ta \cap \Tb \hookrightarrow \Ta \cap \tor{\bb'}$ such that the following hold:
\begin{enumerate}[(a)]
\item There is an admissible triple-CHD $\left(\Sigma; \ba; \bb; \bb^{+}; z_{\pm}; \pi\right)$ (where $\bb^+$ differs from $\bb'$ by a small pointed isotopy and for each $k$, $\beta^{+}_k$ intersects $\beta_k$ transversely in two points) such that for each $\bx \in \Ta \cap \Tb$, there is a 3-gon class $\psi_{g}^{+} \in \pi_{2}(\bx, \thet{\bb\bb^{+}}, \by^{+})$ with $\mu(\psi_{g}^{+}) = 0$, $\psi_{g}^{+} \cap \AD = \emptyset$, and $n_{z}(\psi_{g}^{+}) = 0$, where $\by^{+} \in \Tb \cap \tor{\bb^{+}}$ is the nearest neighbor to $g(\bx)$.
\item There is an admissible triple-CHD $\left(\Sigma; \ba; \bb^{-}; \bb; z_{\pm}; \pi\right)$ (where $\bb^-$ differs from $\bb'$ by a small pointed isotopy and for each $k$, $\beta^{-}_k$ intersects $\beta_k$ transversely in two points) such that for each $\bx \in \Ta \cap \Tb'$, there is a 3-gon class $\psi_{g}^{-} \in \pi_{2}(\by^{-}, \thet{\bb^{-}\bb}, \bx)$ with $\mu(\psi_{g}^{-}) = 0$, $\psi_{g}^{-} \cap \AD = \emptyset$, and $n_{z}(\psi_{g}^{-}) = 0$, where $\by^{-} \in \Tb \cap \tor{\bb^{-}}$ is the nearest neighbor to $g(\bx)$.
\end{enumerate}
\end{enumerate}
\end{df}

Although the chain maps induced by moves on CHDs count holomorphic representatives of 3-gon classes, one can understand how they interact with the filtration $\red{\rho}$ by studying the appropriate triangle injections - this will allow us to avoid a difficult direct analysis of moduli spaces of 3-gons.  This was accomplished via Lemma 5.0.6 in \cite{et:R}; the following is a modification of that result.

%
%
%
\begin{lem}\label{lem:tri}
Let $K \subset S^3$ be a knot and let $\h = \left(\Sigma; \ba; \bb; z_{\pm}; \pi\right)$ and $\h' = \left(\Sigma; \ba'; \bb'; z_{\pm}; \pi\right)$ be two CHDs for $\DBC{K}$ which are obtained from braids $b$ and $b'$ (possibly after Heegaard stabilization), and suppose that the following hold:

\begin{enumerate}[(i)]
\item For integers $m,n$ with $m \geq n \geq 0$, there is a sequence of pointed isotopies and/or handleslides
$$ \h =\h_0 \mapsto \h_1\mapsto \ldots \mapsto \h_{n}  \mapsto \h_{n+1} \mapsto \ldots \mapsto \h_m = \h'$$
such that each intermediate CHD $\h_{i} = \left( \Sigma; \ba^i; \bb^i; z_{\pm}; \pi \right)$ is admissible.  Recall that we use a fixed projection map $\pi: \Sigma \rightarrow S^2$ throughout this sequence.
\item There are triangle injections $g_i: \tor{\ba^{i-1}} \cap \tor{\bb^{i-1}} \rightarrow \tor{\ba^i} \cap \tor{\bb^i}$ (for $1 \leq i \leq n$) and $g_i:\tor{\ba^i} \cap \tor{\bb^i} \rightarrow \tor{\ba^{i-1}} \cap \tor{\bb^{i-1}}$ (for $n < i \leq m$) satisfying
$$ \text{Im} \left( g_n \circ g_{n-1} \circ \ldots \circ g_1  \right) \subseteq \text{Im} \left( g_{n+1} \circ g_{n+2} \circ \ldots \circ g_{m} \right)$$
\item The composition $g: \Ta \cap \Tb \rightarrow \Tap \cap \Tbp$ given by 
$$ g := \left(  g_{n+1} \circ g_{n+2} \circ \ldots \circ g_{m}\right)^{-1} \circ \left( g_n \circ g_{n-1} \circ \ldots \circ g_1 \right)$$
satisfies $\red{R}(g(\bx)) = \red{R}(\bx)$ for each $\bx \in \tor{\ba} \cap \tor{\bb}$.
\end{enumerate}
Then for each $\mathfrak{s} \in \Sc{K}$, the $F$-filtered complexes $\widehat{CF}(\h, \mathfrak{s})$ and $\widehat{CF}(\h', \mathfrak{s})$ have the same filtered chain homotopy type, where $F$ can be chosen to be either $\red{R}$ or $\red{\rho}$.
\end{lem}

\begin{proof}
The case in which $m = n$ (i.e. $\h_n = \h'$) is very similar to that appearing in Lemma 5.0.6 of \cite{et:R} - the statement there concerned the unreduced filtrations $R$ and $\rho$ and the manifold $\DBCs{K}$.  Due to the similarities between the reduced and unreduced constructions, the same argument works here as well.

An analogous argument works when $m > n$, though the triangle injection $g_i$ ``goes the wrong way'' in the sequence of Heegaard moves when $n < i \leq m$.  To remedy this, one simply reverses the roles of the 3-gons $\psi_{g_{i}}^{\pm}$ associated to $g_{i}$ for $n < i \leq m$ when running the argument from \cite{et:R}.

It is implicit in the proof of Lemma 5.0.6 of \cite{et:R} that under these hypotheses, the $\red{R}$-filtered complexes also have the same filtered chain homotopy type.
\end{proof}

\begin{rmk}\label{rmk:links}
Notice that if the plat closure of a braid $b$ is a link $L \subset S^3$ (rather than a knot), versions of Propositions \ref{prop:DBCred} and \ref{prop:cmRred} still hold and provide a filtration $\red{R}$ on $\widehat{CF}(\DBC{L})$.  However, the absolute grading $\tld{gr}$ (and thus the filtration $\red{\rho}$) is only defined for the summands $\widehat{CF}(\DBC{L},\mathfrak{s})$ with $\mathfrak{s} \in \text{Spin}^c(\DBC{L})$ torsion.

Assuming one restricts to these torsion summands, an analogue of Lemma \ref{lem:tri} holds for links as well (the proof doesn't rely on the number of components); this will be applied to a particular two-component link $L_K$ in the proof of Proposition \ref{prop:RredR} below.  
Section \ref{sec:links} will discuss the situation for links in more generality.
\end{rmk}

\subsection{The reduced-unreduced correspondence}\label{sec:RredR}

Proposition \ref{prop:RredR} provides the relationship between the filtrations $\rho$ and $\red{\rho}$.  However, we'll need Lemma \ref{lem:RredR}, a technical fact which concerns a braid $b \in \B{2n-2}$ inducing a reducible fork diagram which looks as in Figure \ref{fig:redfork} in a neighborhood of the two rightmost punctures.  Such a braid can be obtained for any knot via some appropriate stabilization.

\begin{figure}[h!]
\centering
\begin{minipage}[c]{.40\linewidth}
\includegraphics[height = 20mm]{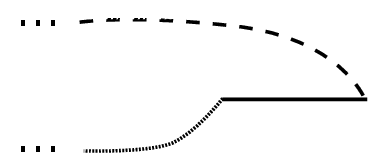}
\end{minipage}
\begin{minipage}[c]{.55\linewidth}
\caption[A fork diagram for comparing reduced and unreduced theories]{The local configuration near $\mu_{2n-3}$ and $\mu_{2n-2}$ in Lemma \ref{lem:RredR}.  The dashed arc is $\beta_{n-1}$.}
\label{fig:redfork}
\end{minipage}
\end{figure}

Given a knot $K$, let $L_{K}$ be the two-component split link whose components are $K$ and an unknot; recall that $\DBC{L_K} \cong \DBCs{K}$.  As mentioned in Remark \ref{rmk:links}, one can endow $\widehat{CF}(\DBCs{K}, \mathfrak{s} \# \mathfrak{s}_{0})$ with the $\red{R}$ and $\red{\rho}$ filtrations for $\mathfrak{s} \in \text{Spin}^{c}(\DBC{K})$ and $\mathfrak{s}_{0} \in \text{Spin}^{c}(\SxS)$ torsion.

Now suppose that a fork diagram has some puncture $\mu_{k} \in \alpha_{j} \cap \beta_{j}$ (for arbitrary $j$ and $k = 2j-1$ or $k=2j$) and an arc avoiding the other $\alpha_i$ and $\beta_i$ and connecting $\mu_k$ to the boundary of the unit disk.  Then one can define versions of  $\red{R}$ and $\red{\rho}$ in the obvious way by omitting $\alpha_{j}$, $\beta_{j}$ (this is slightly more general than the usual reduction procedure described immediately after Definition \ref{def:red}).

\begin{rmk}
From this point forward, all sequences of isotopies and handleslides passing through CHDs will preserve the points $z_{\pm}$ and the branched covering map $\pi$.  Therefore, we'll drop these from the notation (though we'll often keep track of the basepoint $z_+$ for Heegaard Floer purposes.  Furthermore, all Heegaard diagrams should be understood to be CHDs (and we'll often refer to them as ``Heegaard diagrams'').
\end{rmk}

\begin{lem}\label{lem:RredR}
Let $b \in \B{2n-2}$ be a braid inducing a fork diagram of the form shown in Figure \ref{fig:redfork}.  Denote by $\h_{n}$ the Heegaard diagram obtained by omitting the pair $\alpha_{n}$, $\beta_{n}$ from the reducible fork diagram induced by the braid $b \times (1)^{2} \in \B{2n}$, and by $\h_{n-1}$ theHeegaard diagram obtained by omitting the pair $\alpha_{n-1}$, $\beta_{n-1}$ from the same fork diagram.  Then $\widehat{CF}(\h_{n})$ and $\widehat{CF}(\h_{n-1})$ have the same $\red{R}$-filtered chain homotopy type.
\end{lem}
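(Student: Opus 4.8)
The plan is to exhibit both $\h_{n}$ and $\h_{n-1}$ as stabilizations of one common pointed Heegaard diagram $\h_{0}$, with the two stabilizing genus‑one handles supported away from the basepoint $+\infty$, and then to verify that the resulting destabilization isomorphisms preserve the grading $\red{R}$. A preliminary remark makes this reasonable to hope for: the shift $s_{\red{R}}$ occurring in $\red{R}$ is \emph{the same number} for the two diagrams, since $\h_{n}$ and $\h_{n-1}$ come from one and the same fork diagram, namely that of $b\times(1)^{2}\in\B{2n}$, so the braid, the plat diagram $D$, and the integer $n$ entering $s_{\red{R}}(b\times(1)^{2},D)$ are common to both. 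It therefore suffices throughout to control the relative function $\red{\tld{R}}=\red{T}-\red{Q}+\red{P}$.

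First I would pin down the two ``trivial'' arc pairs. Because $b\times(1)^{2}$ restricts to the identity on the two rightmost strands, $\alpha_{n}$ and $\beta_{n}$ are standard arcs at $\mu_{2n-1},\mu_{2n}$ whose lifts $\ah_{n},\bh_{n}$, after a small isotopy supported near those punctures, lie in standard destabilization position --- a genus‑one summand of $\Sigma_{n-1}$, disjoint from the remaining attaching curves and from $+\infty$, carrying a single $\ah$--$\bh$ intersection point. Hence $\h_{n-1}$, which retains $\ah_{n},\bh_{n}$, is a stabilization of a diagram $\h_{0}$ on $\Sigma_{n-2}$ with attaching curves $\ah_{1},\ldots,\ah_{n-2};\bh_{1},\ldots,\bh_{n-2}$ and basepoint $+\infty$. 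For $\h_{n}$ I would run the parallel argument with $\ah_{n-1},\bh_{n-1}$: the hypothesis that the fork diagram has the local form of Figure \ref{fig:redfork} near $\mu_{2n-3},\mu_{2n-2}$ is precisely what permits the analogous small isotopy, supported in that region and crossing neither $+\infty$ nor the anti‑diagonal, which places $(\ah_{n-1},\bh_{n-1})$ in standard destabilization position --- a bare reducibility hypothesis would not suffice. Destabilizing $\h_{n}$ at $(\ah_{n-1},\bh_{n-1})$ then yields the \emph{same} diagram $\h_{0}$, because the curves $\ah_{i},\bh_{i}$ with $i\le n-2$ are lifts of arcs of $b$ unaffected by the stabilization $b\mapsto b\times(1)^{2}$, and the passage from the fork diagram of $b\times(1)^{2}$ to $\h_{0}$ deletes the same four branch points $\mu_{2n-3},\mu_{2n-2},\mu_{2n-1},\mu_{2n}$ along either route.

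Next I would check that the destabilization chain isomorphisms $\widehat{CF}(\h_{n})\cong\widehat{CF}(\h_{0})\cong\widehat{CF}(\h_{n-1})$ respect the $\red{R}$-filtration. Writing generators of $\widehat{CF}(\h_{n})$ as $\bx\cup\{q\}$ and of $\widehat{CF}(\h_{n-1})$ as $\bx\cup\{p\}$, with $q$ and $p$ the two single intersection points, two things remain. For the relative part: for core generators $\bx,\by$ and a class $\phi\in\pi_{2}(\bx,\by)$ supported in the core, the value $\mu(\phi)+2[\phi]\cdot[\AD]$ computed in $Sym^{n-1}(\widehat{S})$ is unaffected by whether one works in $\h_{n}$ or $\h_{n-1}$, since the stabilizing summand contributes only a product disk of Maslov index zero disjoint from $\AD$; here one invokes the description in \cite{et:R} of the order‑one zero of $\red{\Theta}$ along $\AD$, together with the observation recorded just before the lemma that $n_{+\infty}(\phi)=0$ if and only if $n_{-\infty}(\phi)=0$. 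For the absolute part: one must show the constants $\red{\tld{R}}(\bx\cup\{q\})-\red{\tld{R}}(\bx)$ and $\red{\tld{R}}(\bx\cup\{p\})-\red{\tld{R}}(\bx)$ agree --- each is independent of $\bx$ because $q$, resp.\ $p$, lies in a region of $\widehat{S}$ disjoint from all other arcs --- so the claim reduces to evaluating $\red{T}-\red{Q}+\red{P}$ on a single trivial fork, and the two evaluations coincide precisely because Figure \ref{fig:redfork} arranges the local pictures of $(\alpha_{n-1},\beta_{n-1})$ and of $(\alpha_{n},\beta_{n})$ to be the same. Combining the two parts, the composite $\widehat{CF}(\h_{n})\to\widehat{CF}(\h_{0})\to\widehat{CF}(\h_{n-1})$ is an $\red{R}$-preserving chain isomorphism, and in particular an $\red{R}$-filtered chain homotopy equivalence.

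I expect the main obstacle to be this last (absolute) comparison together with the isotopy claim of the second paragraph: one has to confirm that the configuration of Figure \ref{fig:redfork} really does move $(\ah_{n-1},\bh_{n-1})$ into standard destabilization position by an isotopy avoiding both $+\infty$ and $\AD$ --- either crossing would change $\red{R}$ --- and that the trivial point $q$ feeds into $\red{T}-\red{Q}+\red{P}$ exactly as $p$ does. Should the destabilization bookkeeping prove clumsy, the fallback is to connect $\h_{n}$ to $\h_{n-1}$ directly by a short explicit sequence of handleslides and isotopies modeled on the proof of Proposition \ref{prop:DBCred}, tracking $\red{R}$ one move at a time; the accounting with $[\phi]\cdot[\AD]$ is essentially the same either way.
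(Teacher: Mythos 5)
There is a genuine gap: the destabilization structure on which your whole argument rests does not exist. Because $b\times(1)^{2}$ is trivial on the two rightmost strands, $\alpha_{n}$ and $\beta_{n}$ are the \emph{same} arc joining $\mu_{2n-1}$ and $\mu_{2n}$, so their lifts $\ah_{n}$ and $\bh_{n}$ are parallel (isotopic) circles in $\Sigma_{n-1}$; after a small perturbation they meet in \emph{two} points of algebraic intersection number zero. This pair is exactly the genus-one $\SxS$ summand --- both $\h_{n}$ and $\h_{n-1}$ present $\DBCs{K}$, and the paper's proof of Proposition \ref{prop:RredR} records $\h_{n-1}=\h\#\h_{0}$ with $\h_{0}$ the standard two-generator $\SxS$ diagram, whose generators $y,z\in\ah_{n}\cap\bh_{n}$ sit in distinct $\red{R}$-levels. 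A pair of parallel circles is never in destabilization position, which requires a single transverse intersection point. Symmetrically, $(\ah_{n-1},\bh_{n-1})$ in $\h_{n}$ is not in destabilization position either: $\beta_{n-1}=bE'_{n-1}$ is dragged through the whole diagram by the braid and in general meets many of the $\ah_{k}$, while $\alpha_{n-1}$ meets many of the $\bh_{j}$ --- this is precisely why the paper's argument needs its Case II, which treats generators with components $x\in\alpha_{n-1}\cap\beta_{j}$ and $y\in\alpha_{k}\cap\beta_{n-1}$ for $j,k<n-1$. No isotopy supported near $\mu_{2n-3}$ and $\mu_{2n-2}$ can remove intersections occurring elsewhere in the diagram. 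Consequently there is no common diagram $\h_{0}$ on $\Sigma_{n-2}$, and the chain \emph{isomorphism} you aim for (which would in particular force $\h_{n}$ and $\h_{n-1}$ to have equal numbers of generators) cannot be produced; the lemma only asserts, and only admits, a filtered chain homotopy equivalence.

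Your stated fallback is in fact the paper's actual proof: one passes from $\{\ah_{1},\dots,\ah_{n-1}\}$ to $\{\ah_{1},\dots,\ah_{n-2},\ah_{n}'\}$ by a sequence of handleslides and isotopies avoiding $\pm\infty$ (possible because the lifted arcs satisfy a single homology relation in $H_{1}(\Sigma_{n-1};\mathbb{Z})$, as in the proof of Proposition \ref{prop:DBCred}), likewise for the $\beta$-circles via an intermediate fork diagram $\mathcal{F}'$, and then realizes these moves by triangle injections $g_{\ba}$ and $g_{\bb}$, verifying $\red{R}((g_{\bb}\circ g_{\ba})(\bx))=\red{R}(\bx)$ through the explicit local computations of Cases I and II. Your observation that the shift $s_{\red{R}}$ is common to the two diagrams is correct and is implicitly used there, but it does not rescue the destabilization route; if you pursue this lemma you must carry out the handleslide/triangle bookkeeping, not merely name it as a contingency.
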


\begin{proof}
We compare the filtered chain homotopy types of the complexes obtained from $\h_{n}$ and $\h_{n-1}$.  This will be accomplished via an intermediate picture: let $\alpha'_{n}$ be the arc obtained from $\alpha_{n}$ via the finger-like isotopy shown in Figure \ref{fig:redalpha}, and let $\beta'_{n} = b\alpha'_{n}$.

\begin{figure}[h!]
\centering
\begin{minipage}[c]{.55 \linewidth}
\includegraphics[height = 20mm]{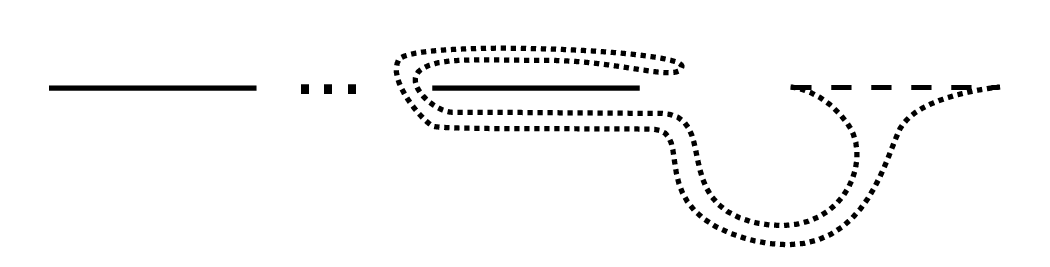}
\end{minipage}
\begin{minipage}[c]{.35\linewidth}
\caption[Finger-like isotopy for comparing arc omission choices]{The $\alpha$-arcs in the fork diagram $\mathcal{F}'$; $\alpha_{n}$ is dashed, $\alpha'_{n}$ is dotted, and $\alpha_{i}$ is solid for $i < n$.\label{fig:redalpha}}
\end{minipage}
\end{figure}

Let $\mathcal{F}$ denote the reduced fork diagram obtained by omitting the pair $\alpha_{n}$, $\beta_{n}$, and let $\mathcal{F}'$ denote the diagram obtained from $\mathcal{F}$ by replacing $\alpha_{n-1}$ and $\beta_{n-1}$ with $\alpha'_{n}$ and $\beta'_{n}$.  We'll denote by $\h'_{n-1}$ the diagram for $\DBC{K}$ covering $\mathcal{F}'$.  Furthermore, let
\begin{align*}
\red{\Zcal} &= \left( \alpha_{1} \times \ldots \times \alpha_{n-1} \right) \cap \left( \beta_{1} \times \ldots \times \beta_{n-1} \right) \quad \text{and} \quad\\
\red{\Zcal'} &= \left( \alpha_{1} \times \ldots \times \alpha_{n-2} \times \alpha'_{n} \right) \cap \left( \beta_{1} \times \ldots \times \beta_{n-2} \times \beta'_{n} \right).
\end{align*}

\begin{figure}[h!]
\begin{center}
\begin{minipage}[c]{.60\linewidth}
\labellist
\small
\pinlabel $a$ at 22 122
\pinlabel \rotatebox{180}{\reflectbox{$a$}} at 22 43
\pinlabel $b$ at 122 122
\pinlabel \rotatebox{180}{\reflectbox{$b$}} at 122 43
\pinlabel $c$ at 246 122
\pinlabel \rotatebox{180}{\reflectbox{$c$}} at 246 43
\pinlabel $d$ at 340 122
\pinlabel \rotatebox{180}{\reflectbox{$d$}} at 341 43
\endlabellist
\includegraphics[height = 40mm]{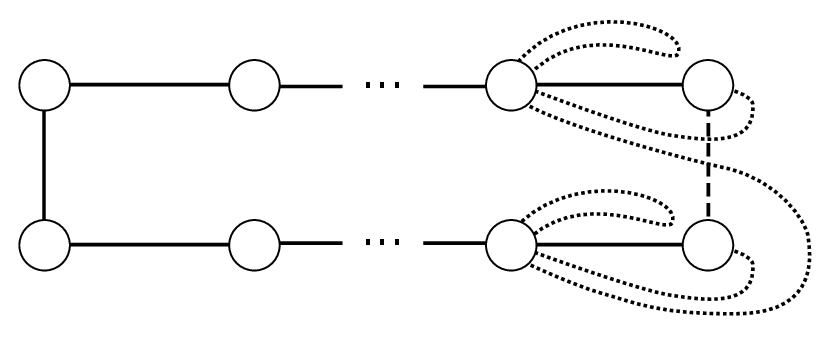}
\end{minipage}
\begin{minipage}[c]{.30\linewidth}
\caption[The Heegaard diagram covering Figure \ref{fig:redalpha}]{The $\alpha$-circles in the Heegaard diagram $\h'_{n-1}$\label{fig:redalphaH}}
\end{minipage}
\end{center}
\end{figure}

The set of attaching circles $\ba' = \{ \ah_{1}, \ldots, \ah_{n-2}, \ah_{n}' \}$ can be obtained from $\ba = \{ \ah_{1}, \ldots, \ah_{n-1} \}$ via a sequence of $n-2$ pointed handleslides.  The sequence of handleslides is illustrated in Figure \ref{fig:alphar} (for $n=5$).  Let $\alpha^1:=\ah_{n-1}$.  For $1 \leq i \leq n-3$, the $i^{th}$ handleslide consists of sliding $\alpha^i$ over $\ah_{n-(i+1)}$ to obtain $\alpha^(i+1)$.  Notice that we choose $\alpha^2$ to be the member of its isotopy class that has the ``fingers'' depicted in Figure \ref{fig:alphar1}.  For $2 \leq i \leq n-3$, we choose $\alpha^{i+1}$ to be the member of its isotopy class which is very close to $\alpha^{i}$, as seen in Figure \ref{fig:alphar2}.  In the final handleslide, we slide $\alpha^{n-2}$ over $\ah_{1}$ to obtain $\alpha^{n-1}$, choosing the representative of its isotopy class that is depicted in Figure \ref{fig:alphar3}.  Notice that the curve is the result of applying two ``finger move'' isotopies to the circle which is precisely the cover of the right-most arc $\alpha_n$ that we omitted to reduce the fork diagram.

\begin{figure}[h!]
\centering
\subfigure[]{
\labellist 
\small
\pinlabel* {$a$} at 32 140
\pinlabel* {$b$} at 132 140
\pinlabel* {$c$} at 232 140
\pinlabel* {$d$} at 332 140
\pinlabel* {\rotatebox{180}{\reflectbox{$a$}}} at 32 65
\pinlabel* {\rotatebox{180}{\reflectbox{$b$}}} at 132 65
\pinlabel* {\rotatebox{180}{\reflectbox{$c$}}} at 232 65
\pinlabel* {\rotatebox{180}{\reflectbox{$d$}}} at 332 65
\endlabellist
\includegraphics[height = 40mm]{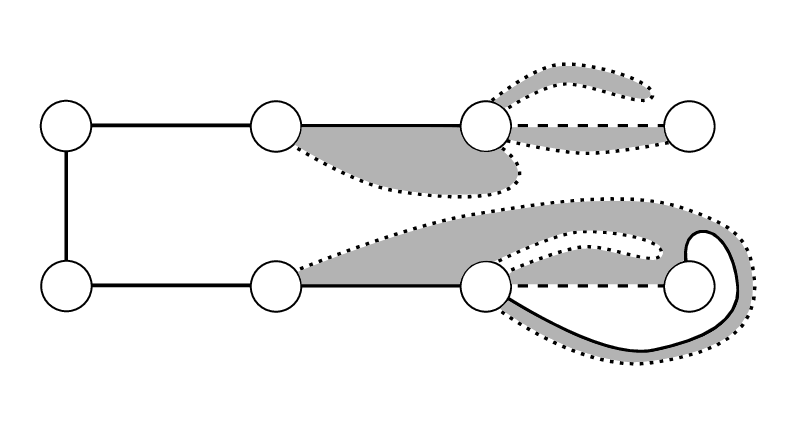}\label{fig:alphar1}}\quad
\subfigure[]{
\labellist 
\small
\pinlabel* {$a$} at 32 140
\pinlabel* {$b$} at 132 140
\pinlabel* {$c$} at 232 140
\pinlabel* {$d$} at 332 140
\pinlabel* {\rotatebox{180}{\reflectbox{$a$}}} at 32 65
\pinlabel* {\rotatebox{180}{\reflectbox{$b$}}} at 132 65
\pinlabel* {\rotatebox{180}{\reflectbox{$c$}}} at 232 65
\pinlabel* {\rotatebox{180}{\reflectbox{$d$}}} at 332 65
\endlabellist
\includegraphics[height = 40mm]{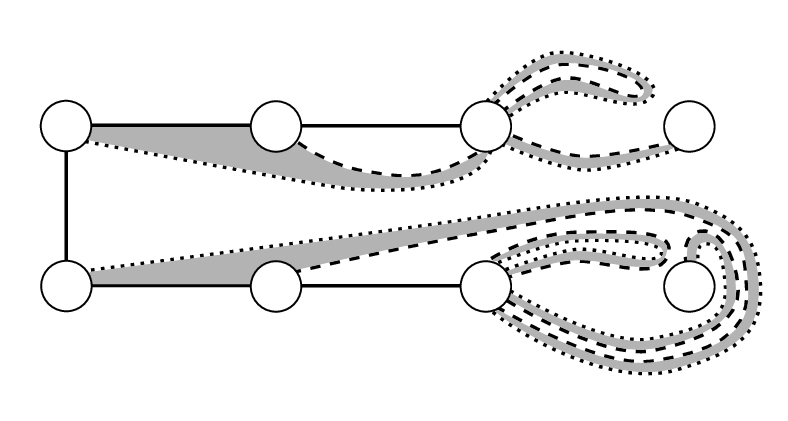}\label{fig:alphar2}}\quad
\subfigure[]{
\labellist 
\small
\pinlabel* {$a$} at 32 140
\pinlabel* {$b$} at 132 140
\pinlabel* {$c$} at 232 140
\pinlabel* {$d$} at 332 140
\pinlabel* {\rotatebox{180}{\reflectbox{$a$}}} at 32 65
\pinlabel* {\rotatebox{180}{\reflectbox{$b$}}} at 132 65
\pinlabel* {\rotatebox{180}{\reflectbox{$c$}}} at 232 65
\pinlabel* {\rotatebox{180}{\reflectbox{$d$}}} at 332 65
\endlabellist 
\includegraphics[height = 40mm]{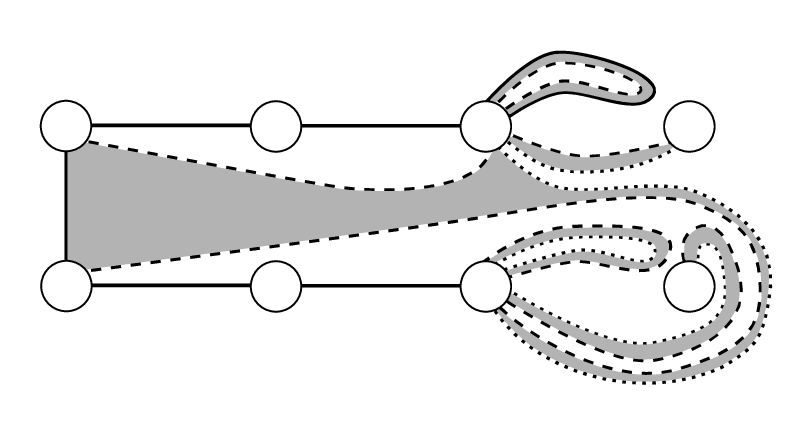}\label{fig:alphar3}}
\caption[Handleslides appearing in the proof of Lemma \ref{lem:RredR}]{Handleslides connecting the set $\{ \ah_1, \ah_2, \alpha^1:=\ah_3 \}$ to $\{ \ah_1, \ah_2, \alpha^{3} := \ah_3' \}$.  In each picture, the old curve $\alpha^{i}$ is dashed, the new curve $\alpha^{i+1}$ is dashed, the unaffected $\ah_{k}$ are solid, and the pair of pants is dashed.
\label{fig:alphar}}
\end{figure}

As the diffeomorphism induced by the braid carries $\ah_i$ to $\bh_i$,  the set  of circles $\bb' =  \{ \bh_{1}, \ldots, \bh_{n-2}, \bh_{n}' \}$ can be obtained from the set $\bb = \{ \bh_{1}, \ldots, \bh_{n-1} \}$ via an analogous sequence of $n-2$ pointed handleslides, where the new curve appearing in the $i^{th}$ handleslide is denoted by $\beta^{i+1}$ and where $\beta^1 := \bh_{n-1}$.  These handeslides are obtained by precomposing the $\alpha$-handleslides with the braid diffeomorphism.  Note that  the parallel $\alpha$-curves in these handleslides (and thus the $\beta$ curves which are their images under the braid diffeomorphism) should be very close together - they're shown somewhat separated in Figure \ref{fig:alphar} for readability.

Now we describe the sequence of Heegaard diagrams that we'll study.  Define the sets of attaching curves $\ba^i, \bb^i$ (for $1 \leq i \leq n-1$), where
$$ \ba^i := \left\{ \ah_{1}, \ldots, \ah_{n-2}, \alpha^{i} \right\} \quad \text{and}\quad
\bb^i := \left\{ \bh_{1}, \ldots, \bh_{n-2}, \beta^{i} \right\}$$

Then for $1 \leq i,j \leq n-1$, define $\h^{i,j}:= \left( \Sigma; \ba^i; \bb^j; +\infty \right)$.  Then we study the sequence of handleslides given by
$$ \h^{1,1} \mapsto \h^{2,1} \mapsto \ldots \mapsto \h^{n-2,1} \mapsto \h^{n-2,2} \mapsto \ldots \mapsto \h^{n-2,n-2} \mapsto \h^{n-1,n-2} \mapsto \h^{n-1,n-1}.$$
Notice that $\h_{n} = \h^{1,1}$ and $\h_{n-1}' = \h^{n-1,n-1}$.  Recall that because they cover fork diagrams, $\h_{n}$ and $\h_{n-1}$ are admissible.  We should address admissibility of intermediate diagrams.

We claim that the move connecting $\h^{1,1}$ (resp. $\h^{n-2,1}$) to $\h^{2,1}$ (resp. $\h^{n-2,2}$) is a composition of an admissibility-preserving pointed handleslide and several admissibility-preserving pointed finger isotopies.  To use Lemma \ref{lem:hsad}, we must exhibit an annular region of the diagram of the form of the region in the hypothesis of the Lemma.  Figure \ref{fig:alphaslide} illustrates this annulus in a local picture of the handleslide $\h^{i,1} \rightarrow \h^{i+1,1}$ for $2 \leq i \leq n-3$, and so those handleslides preserve admissibility.  The required annular region for $\h^{n-2, i+1} \rightarrow \h^{n-2, i+1}$ can be obtained by transforming Figure \ref{fig:alphaslide} by the braid diffeomorphism.  The handleslides $\h^{1,1} \rightarrow \h^{2,1}$ can be realized as the composition of a handleslide like the one in Figure \ref{fig:alphaslide} followed by a pair of finger isotopies, which preserve admissibility by Lemma \ref{lem:isoad}.  Notice also that one can pass from $\h_{n-1}$ to $\h_{n-1}'$ to $\h^{n-1,n-2}$ to $\h^{n-2,n-2}$ via several admissibility-preserving pointed finger isotopies.

\begin{figure}[h!]
\begin{center}
\begin{minipage}[c]{.55\linewidth}
\subfigure[$\alpha^{i-1}$ and $\ah_{n-i}$]{
\labellist 
\small
\pinlabel* {$a$} at 112 105
\pinlabel* {\rotatebox{180}{\reflectbox{$a$}}} at 112 28
\endlabellist
\includegraphics[height = 25mm]{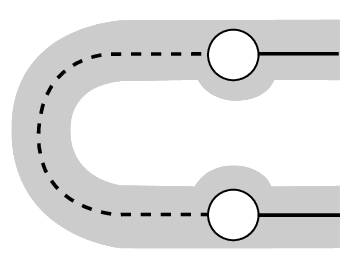}\label{fig:alphaslide1}}\quad
\subfigure[$\alpha^{i}$ and $\ah_{n-i}$]{
\labellist 
\small
\pinlabel* {$a$} at 112 105
\pinlabel* {\rotatebox{180}{\reflectbox{$a$}}} at 112 28
\endlabellist 
\includegraphics[height = 25mm]{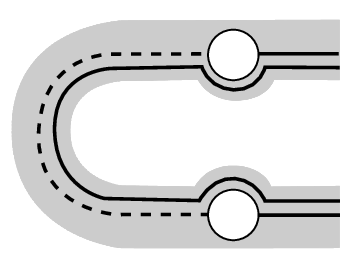}\label{fig:alphaslide2}}
\end{minipage}
\begin{minipage}[c]{.4\linewidth}
\caption[A $\alpha$-curve handleslide]{A local picture of the handleslide of $\alpha^{i-1}$ (solid) over $\ah_{n-i}$ (dashed) to yield $\alpha^{i}$ (solid) appearing in the proof of Lemma \ref{lem:RredR}.  The annulus of Lemma \ref{lem:hsad} is shaded.
\label{fig:alphaslide}}
\end{minipage}
\end{center}
\end{figure}

In Section \ref{sec:redtri} below, we'll construct triangle injections $g_{\alpha}^i, g_{\beta}^i$ for $1 \leq i \leq n-3$, where
\begin{equation*}
g_{\alpha}^i: \tor{\ba^i} \cap \tor{\bb^1} \rightarrow \tor{\ba^{i+1}} \cap \tor{\bb^1} \quad \text{and} \quad g_{\beta}^i: \tor{\ba^{n-1}} \cap \tor{\bb^i} \rightarrow \tor{\ba^{n-1}} \cap \tor{\bb^{i+1}}
\end{equation*}
as well as triangle injections 
$$g_{\alpha}^{n-2}: \tor{\ba^{n-1}} \cap \tor{\bb^{n-2}} \rightarrow \tor{\ba^{n-2}} \cap \tor{\bb^{n-2}} \quad \text{and} \quad
g_{\beta}^{n-2}: \tor{\ba^{n-1}} \cap \tor{\bb^{n-1}} \rightarrow \tor{\ba^{n-1}} \cap \tor{\bb^{n-2}}$$
Additionally, we'll have that
$$\text{Im} \left( g_{\beta}^{n-3} \circ \ldots \circ g_{\beta}^{1} \circ g_{\alpha}^{n-3} \circ \ldots \circ g_{\alpha}^{1} \right) \subseteq \text{Im}\left( g_{\alpha}^{n-2} \circ g_{\beta}^{n-2} \right),$$
which gives a function
$$ g = \left(  \left( g_{\alpha}^{n-2} \circ g_{\beta}^{n-2} \right)^{-1} \circ g_{\beta}^{n-3} \circ \ldots \circ g_{\beta}^{1} \circ g_{\alpha}^{n-3} \circ \ldots \circ g_{\alpha}^{1} \right): \tor{\ba} \cap \tor{\bb} \rightarrow \tor{\ba'} \cap \tor{\bb'}$$
By the version of Lemma \ref{lem:tri} for links (see Remark \ref{rmk:links}), it then suffices to check that $\red{R}(g(\bx)) = \red{R}(\bx)$ for all $\bx \in \tor{\ba} \cap \tor{\bb}$ - this is also checked in Section \ref{sec:redtri} below.
\end{proof}

\begin{rmk}
One might ask whether it is possible to reduce a fork diagram by deleting any pair $\alpha_{k}$, $\beta_{k}$, assuming that they satisfy technical conditions analogous to those found in Definition \ref{def:red}.  This can be done, and the methods used in the proof of Lemma \ref{lem:RredR} could be extended to show that the filtered chain homotopy type doesn't depend on the choice of pair to delete.  However, we don't prove this fact here, as it isn't necessary for proving Proposition \ref{prop:RredR}.
\end{rmk}

\begin{proof}[Proof of Proposition \ref{prop:RredR}]
Let $b \in \B{2n-2}$ be a braid whose plat closure is the knot $K$ (with $n > 1$), and which induces a reducible fork diagram of the form shown in Figure \ref{fig:redfork}.  The plat closure of $b'= b\times (1)^{2} \in \B{2n}$ is the two-component link $L_K$; recall that $\DBC{L_K} \cong \DBCs{K}$.  Let $\h_{n}$ and $\h_{n-1}$ be the Heegaard diagrams for $\DBC{L_K}$ obtained from $b'$ as in the statement of Lemma \ref{lem:RredR}.

Let $\mathfrak{s}_{0}$ denote the torsion $\text{Spin}^{c}$-structure on $\SxS$, and let $\sfr \in \text{Spin}^{c}(\DBC{K})$.  Equipped with their respective $\red{R}$ filtrations, Lemma \ref{lem:RredR} provides that the complexes $\widehat{CF}(\h_{n-1}, \sfr \# \sO)$ and $\widehat{CF}(\h_{n}, \sfr \# \sO)$ have the same filtered chain homotopy type.

Further, let $\h'$ denote the genus-$(n-1)$ Heegaard diagram for $\DBCs{K}$ obtained from $b$ in the unreduced sense.  One can see that this diagram is isotopic to $\h_{n}$.  Also, the fork diagram covered by $\h_{n}$ only differs from the fork diagram covered by $\h'$ by an extra pair of punctures which are far from any arcs in the diagram.  Identifying Bigelow generators in the obvious way and observing that $s_{R}(b) = s_{\red{R}}(b'),$ one obtains that the $R$-filtered complex $\widehat{CF}(\h')$ has the same filtered chain homotopy type as the $\red{R}$-filtered complex $\widehat{CF}(\h_{n})$.

Now denote by $\h$ the genus-$(n-2)$ Heegaard diagram for $\DBC{K}$ obtained from $b$ in the reduced sense by omitting the pair $\alpha_{n-1},\beta_{n-1}$.  Letting $\h_{0}$ denote the standard Heegaard diagram for $\SxS$ of genus 1 (containing only generators from $\sO$), one can see that $\h_{n-1} = \h \# \h_{0}$ (where the sum region is near $+\infty$).   As a result, all generators of $\widehat{CF}(\h_{n-1})$ lie in $\text{Spin}^{c}$ structures of the form $\sfr \#\sO$.  Further, the connected sum provides a natural correspondence between each generator $\bx$ for $\widehat{CF}(\h,\sfr)$ and a pair of generators $\bx y$ and $\bx z$ for $\widehat{CF}(\h_{n-1},\sfr \#\sO)$, where $y,z \in \ah_{n} \cap \bh_{n}$.  Now the fork diagram covered by $\h_{n-1}$ is obtained from the one covered by $\h$ via a connected sum with $\mathcal{F}_0$, the two-puncture fork diagram for $1^2 \in \B{2}$. One can then verify directly that
\begin{align*}
\red{ P}^{*}(y) = 2, \quad \red{ P}^{*}(z) = 0, \quad &\red{Q}^{*}(y) = 1, \quad \red{Q}^{*}(z) = 0,\\
 \red{T}(\bx y) =  \red{T}(\bx),  \quad \red{T}(\bx z) =  \red{T}(\bx), \quad &\text{and} \quad s_{\red{R}}(b') = s_{\red{R}}(b) - \frac{1}{2}, \quad \text{and thus}\\
\red{R}(\bx y) =  \red{R}(\bx) + \frac{1}{2} \quad &\text{and} \quad \red{R}(\bx z) =  \red{R}(\bx) - \frac{1}{2} 
\end{align*}

As a result, if one equips $\widehat{CF}(\h_{n-1})$ and $\widehat{CF}(\h)$ with their respective $\red{R}$-filtrations, then the filtered complex $\widehat{CF}(\h_{n-1})$ has the same filtered chain homotopy type as the filtered complex $\widehat{CF}(\h) \otimes V.$
Furthermore, notice that
$$ \tld{gr}(\bx y) = \tld{gr}(\bx) + \frac{1}{2} \quad \text{and} \quad \tld{gr}(\bz) = \tld{gr}(\bx) - \frac{1}{2},$$
and so $\widehat{CF}(\h_{n-1})$ and $\widehat{CF}(\h) \otimes W$ have the same $\red{\rho}$-filtered chain homotopy types.  The results follow.
\end{proof}

In light of this relationship, one can now obtain an invariance result for the reduced theory.

\begin{proof}[Proof of Theorem \ref{thm:redRthm}]
Let $\red{\h}_{1}$ and $\red{\h}_{2}$ be two Heegaard diagrams for $\DBC{K}$ obtained from reducible fork diagrams in the usual way, and let $\h_{1}$ $\h_{2}$ be corresponding diagrams for $\DBCs{K}$ obtained from unreduced fork diagrams.  Either fix $F$ to stand for $R$ and $\red{F}$ to stand for $\red{R}$, or fix $F$ to stand for $\rho$ and $\red{F}$ to stand for $\red{\rho}$.  For $\mathfrak{s} \in \text{Spin}^{c}(\DBC{K})$ and $\mathfrak{s}_{0} \in \text{Spin}^{c}(\SxS)$ torsion, equip $\widehat{CF}(\red{\h}_{i},\mathfrak{s} ; \mathbb{F})$ with $\red{F}$ filtrations and equip $\widehat{CF}(\h_{i},\mathfrak{s} \# \mathfrak{s}_{0}; \mathfrak{F})$ with $F$ filtrations.   Proposition \ref{prop:cancel} applies and the result follows via Proposition \ref{prop:RredR} and Theorem 1 from \cite{et:R}.
\end{proof}

\subsubsection{Triangle injections for Lemma \ref{lem:RredR}}\label{sec:redtri}

We'll define the sequence of triangle injections required for the proof of Lemma \ref{lem:RredR}, exhibit the required 3-gons, and check that the composition $g$ preserves $\red{R}$.

We first define the triangle injection $g_{\alpha}^1:\tor{\ba^1} \cap \tor{\bb^1} \rightarrow \tor{\ba^2} \cap \tor{\bb^1}$.  Consider some $\bw x \in \tor{\ba^1} \cap \tor{\bb^1}$, where $x \in \ah_{n-1} \cap \bh_j$, for $1 \leq j \leq n-1$.  Then let $g_{\alpha}^1(\bw x) = \bw y$, where $y \in \alpha^{2} \cap \bh_j$ is as indicated in Figure \ref{fig:ra1}.

\begin{figure}[h]
\centering
\begin{minipage}[c]{.55\linewidth}
\subfigure[]
{
\labellist
\small
\pinlabel* $x$ at 150 145
\pinlabel* $y$ at 30 90
\pinlabel* {$a$} at 25 125
\pinlabel* {$b$} at 120 125
\pinlabel* {\rotatebox{180}{\reflectbox{$a$}}} at 25 45
\pinlabel* {\rotatebox{180}{\reflectbox{$b$}}} at 120 45
\endlabellist
\includegraphics[height = 36mm]{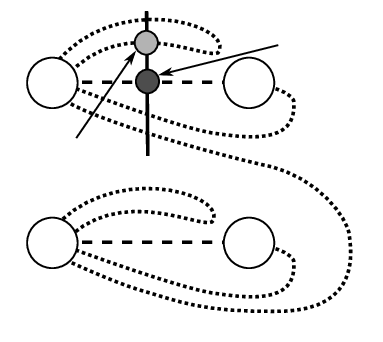}}\quad
\subfigure[]
{
\labellist
\small
\pinlabel* $x$ at 30 90
\pinlabel* $y$ at 120 70
\pinlabel* {$a$} at 25 125
\pinlabel* {$b$} at 120 125
\pinlabel* {\rotatebox{180}{\reflectbox{$a$}}} at 25 45
\pinlabel* {\rotatebox{180}{\reflectbox{$b$}}} at 120 45
\endlabellist
\includegraphics[height = 36mm]{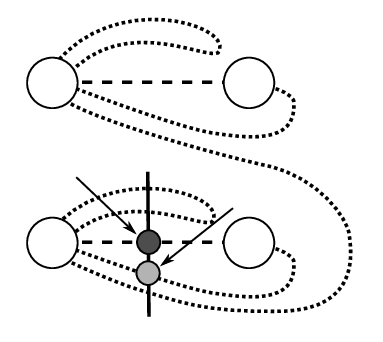}}
\end{minipage}
\begin{minipage}[c]{.40\linewidth}
\caption[Possible inputs for $g_{\alpha}^1$ in Lemma \ref{lem:RredR}]{ Two possibilities for intersections $x \in \ah_{n-1}\cap\bh_{j}$ and $y \in \alpha^2 \cap \bh_j$ appearing in the definition of the triangle injection $g_{\alpha}^1$.  In each picture, $\bh_j$ is solid, $\alpha^1 = \ah_{n-1}$ is dashed, and $\alpha^2$ is dotted.\label{fig:ra1}}
\end{minipage}
\end{figure}

We illustrate the 3-gons $\psi_{\alpha}^{1,+}$ and $\psi_{\alpha}^{1,-}$ associated to $g_{\alpha}^1$.  Recall from Definition \ref{df:trimap} that we seek $\psi_{\alpha}^{1,+} \in \pi_2(\boldsymbol{\theta}_{\ba^{+}\ba^1}, \bw x, \bw y^+)$ and $\psi_{\alpha}^{1,-} \in \pi_2(\boldsymbol{\theta}_{\ba^1\ba^{-}}, \bw y^{-}, \bw y)$, where $\bw^{\pm} y^\pm \in \tor{\ba^{\pm}} \cap \tor{\bb}$ and $\ba^{\pm}$ are small isotopic pushoffs of $\ba^2$ with corresponding curves intersecting transversely in pairs of points.  Components involving $x$ appear in Figure \ref{fig:ra1tri}.  All other components are ``small'' 3-sided regions as seen in Figure \ref{fig:isotri}.

\begin{figure}[h]
\centering
\subfigure[$\psi_{\alpha}^{1,+}$]
{
\labellist
\small
\pinlabel* $x$ at 35 84
\pinlabel* $y^+$ at 145 140
\endlabellist
\includegraphics[height = 32mm]{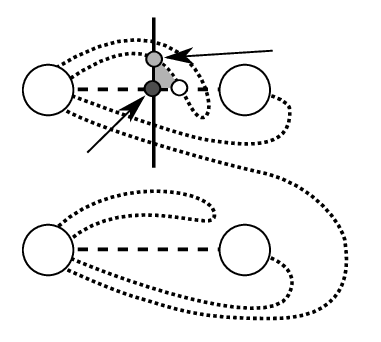}}\quad
\subfigure[$\psi_{\alpha}^{1,+}$]
{
\labellist
\small
\pinlabel* $x$ at 35 80
\pinlabel* $y^+$ at 30 20
\endlabellist
\includegraphics[height = 32mm]{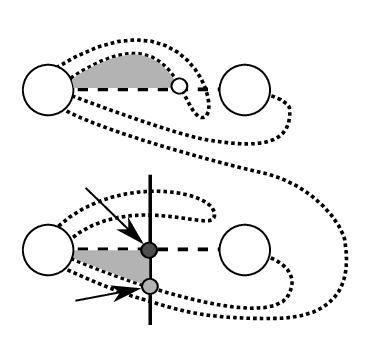}}
\subfigure[$\psi_{\alpha}^{1,-}$]
{
\labellist
\small
\pinlabel* $x$ at 30 85
\pinlabel* $y^-$ at 140 144
\endlabellist
\includegraphics[height = 32mm]{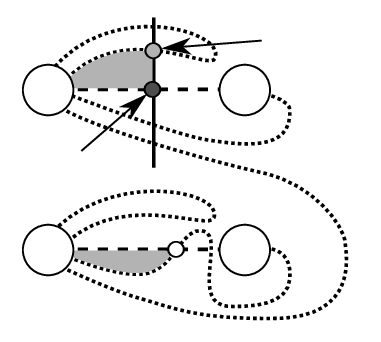}}\quad
\subfigure[$\psi_{\alpha}^{1,-}$]
{
\labellist
\small
\pinlabel* $x$ at 30 83
\pinlabel* $y^-$ at 30 20
\endlabellist
\includegraphics[height = 32mm]{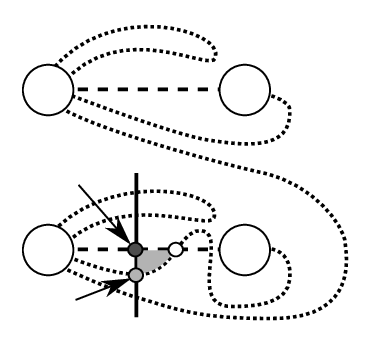}}
\caption[Components of 3-gons for $g_{\alpha}^1$ in the proof of Lemma \ref{lem:RredR}]{ Some components of 3-gons associated to $g_{\alpha}^1$.  Parts (a) and (b) show possible components of $\psi_{\alpha}^{1+}$ and (c) and (d) show possible components of $\psi_{\alpha}^{1-}$.  The dashed curve is $\ah_{n-1}$ and the dotted curve is $(\alpha^2)^+$ in (a) and (b) and $(\alpha^2)^-$ in (c) and (d).  In each picture, the white dot represents a component of the appropriate top-degree $\boldsymbol{\theta}$-generator.\label{fig:ra1tri}}
\end{figure}

\begin{figure}[h]
\centering
\begin{minipage}[c]{.45\linewidth}
\subfigure[$\psi_{\alpha}^{1,+}$]
{
\labellist
\small
\pinlabel* $x$ at 25 8
\pinlabel* $y^+$ at 45 30
\endlabellist
\includegraphics[height = 32mm]{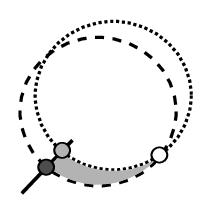}}\quad
\subfigure[$\psi_{\alpha}^{1,-}$]
{
\labellist
\small
\pinlabel* $x$ at 20 8
\pinlabel* $y^-$ at 44 30
\endlabellist
\includegraphics[height = 32mm]{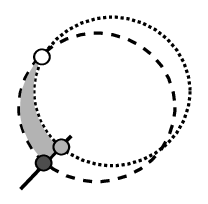}}
\end{minipage}
\begin{minipage}[c]{.50\linewidth}
\caption[Small components of 3-gons for $g_{\alpha}^1$ in the proof of Lemma \ref{lem:RredR}]{Small components of 3-gons associated to $g_{\alpha}^1$\label{fig:isotri}}
\end{minipage}
\end{figure}

Now for $2 \leq i \leq n-3$, we define $g_{\alpha}^i:\tor{\ba^i} \cap \tor{\bb^1} \rightarrow \tor{\ba^{i+1}} \cap \tor{\bb^1}$, the triangle injection associated to the handleslide $\ba^i \rightarrow \ba^{i+1}$.  Consider $\bw x \in \tor{\ba^i} \cap \tor{\bb^1}$, where $x \in \alpha^i \cap \bh_j$ for $1 \leq j \leq n-1$.  We let $g_{\alpha}^i(\bw x) = \bw y$, where $y \in \alpha^{i+1} \cap \bh_j$ is the obvious point close to $x$.  The 3-gon components involving $x$ are shown in Figure \ref{fig:hstri}; all other components are as in Figure \ref{fig:isotri}.

\begin{figure}[h]
\centering
\begin{minipage}[c]{.60\linewidth}
\subfigure[$\psi_{\alpha}^{i,+}$]
{
\labellist
\small
\pinlabel* {$x$} at 140 45
\pinlabel* {$y^+$} at 105 45
\endlabellist
\includegraphics[height = 32mm]{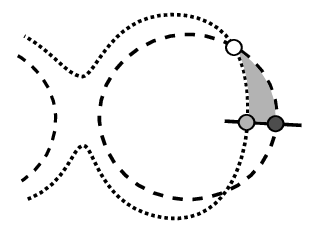}}\quad
\subfigure[$\psi_{\alpha}^{i,-}$]
{
\labellist
\small
\pinlabel* {$x$} at 140 45
\pinlabel* {$y^-$} at 105 45
\endlabellist
\includegraphics[height = 32mm]{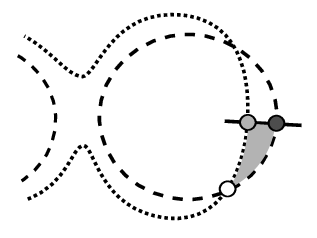}}
\end{minipage}
\begin{minipage}[c]{.35\linewidth}
\caption[Components of 3-gons for $g_{\alpha}^{i} (i>1)$ in the proof of Lemma \ref{lem:RredR}]{Some components of 3-gons associated to $g_{\alpha}^i$ with $2 \leq i \leq n-3.$  In each picture, the leftmost dashed curve is $\ah_{n-i-1}.$\label{fig:hstri}}
\end{minipage}
\end{figure}

The triangle injection $g_{\beta}^1:\tor{\ba^{n-2}} \cap \tor{\bb^1} \rightarrow \tor{\ba^{n-2}} \cap \tor{\bb^{2}}$ is defined in a way that is somewhat analogous to $g_{\alpha}^1$.  For $\bw x \in \tor{\ba^{n-2}} \cap \tor{\bb^1}$ with $x \in \alpha \cap \bh_{n-1}$ (where $\alpha$ is some curve in the tuple $\ba^{n-2}$), we define $g_{\beta}^1(\bw x) = \bw y$, where $y \in \alpha \cap \beta^2$ is as indicated in Figure \ref{fig:rb1}.

\begin{figure}[h]
\centering
\subfigure[]
{
\labellist
\small
\pinlabel* {$a$} at 80 155
\pinlabel* {$b$} at 175 155
\pinlabel* {$x$} at 175 185
\pinlabel* {$y$} at 155 220
\pinlabel* {\rotatebox{180}{\reflectbox{$a$}}} at 80 55
\pinlabel* {\rotatebox{180}{\reflectbox{$b$}}} at 175 55
\endlabellist
\includegraphics[height = 46mm]{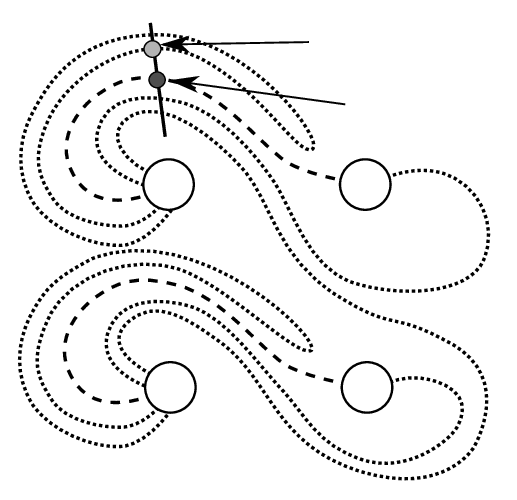}}\quad
\subfigure[]
{
\labellist
\small
\pinlabel* {$a$} at 80 155
\pinlabel* {$b$} at 175 155
\pinlabel* {$x$} at 108 133
\pinlabel* {$y$} at 110 30
\pinlabel* {\rotatebox{180}{\reflectbox{$a$}}} at 80 55
\pinlabel* {\rotatebox{180}{\reflectbox{$b$}}} at 175 55
\endlabellist
\includegraphics[height = 46mm]{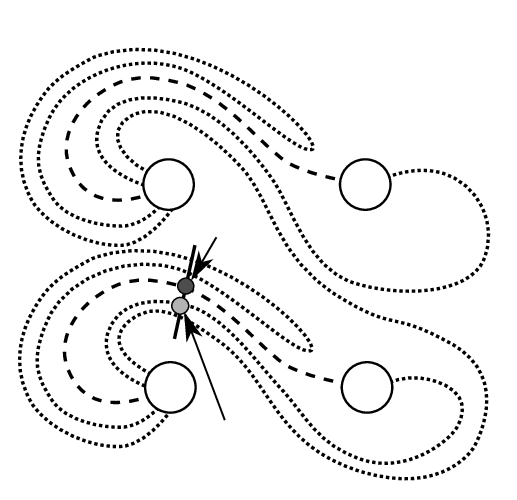}}
\caption[Possible inputs for $g_{\beta}^1$ in the proof of Lemma \ref{lem:RredR}]{ Two possibilities for intersections $x \in \alpha \cap \beta^1$ and $y \in \alpha \cap \beta^2$ appearing in the definition of the triangle injection $g_{\beta}^1$.  In each picture, $\alpha$ is solid, $\beta^1 = \bh_{n-1}$ is dashed, and $\beta^2$ is dotted.\label{fig:rb1}}
\end{figure}

\begin{figure}[h]
\centering
\subfigure[$\psi_{\beta}^{1,+}$]
{
\labellist
\small
\pinlabel* {$x$} at 106 140
\pinlabel* {$y^+$} at 145 215
\endlabellist
\includegraphics[height = 32mm]{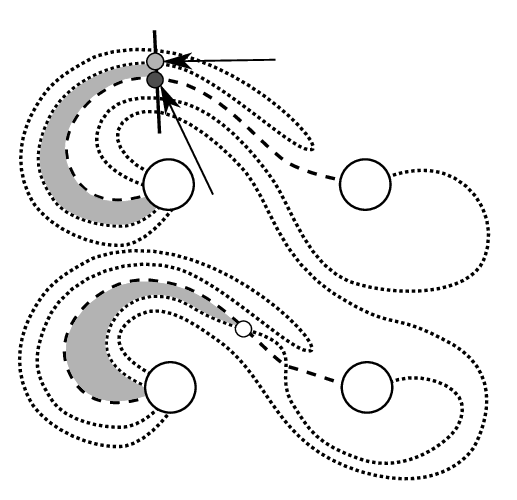}}\quad
\subfigure[$\psi_{\beta}^{1,+}$]
{
\labellist
\small
\pinlabel* {$x$} at 110 135
\pinlabel* {$y^+$} at 115 32
\endlabellist
\includegraphics[height = 32mm]{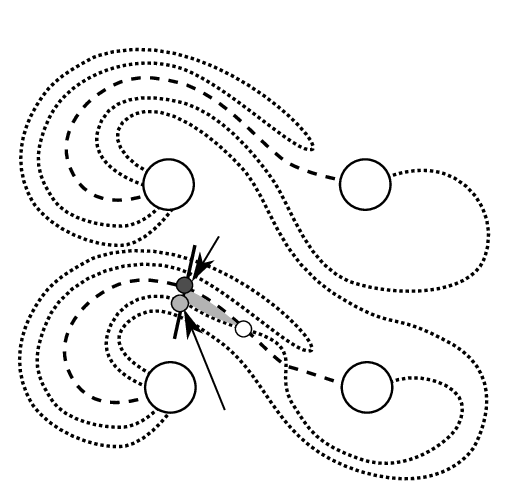}}
\subfigure[$\psi_{\beta}^{1,-}$]
{
\labellist
\small
\pinlabel* {$x$} at 107 133
\pinlabel* {$y^-$} at 170 215
\endlabellist
\includegraphics[height = 32mm]{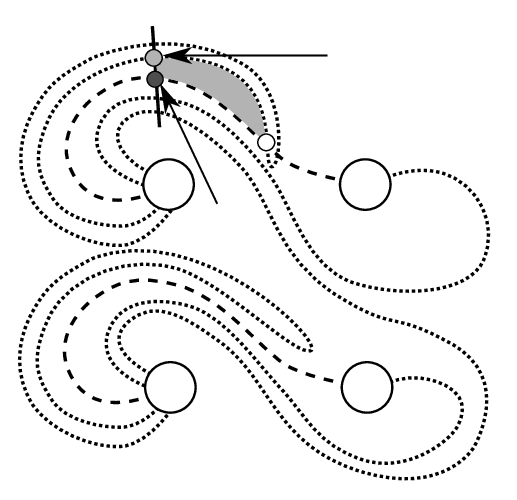}}\quad
\subfigure[$\psi_{\beta}^{1,-}$]
{
\labellist
\small
\pinlabel* {$x$} at 106 138
\pinlabel* {$y^-$} at 115 30
\endlabellist
\includegraphics[height = 32mm]{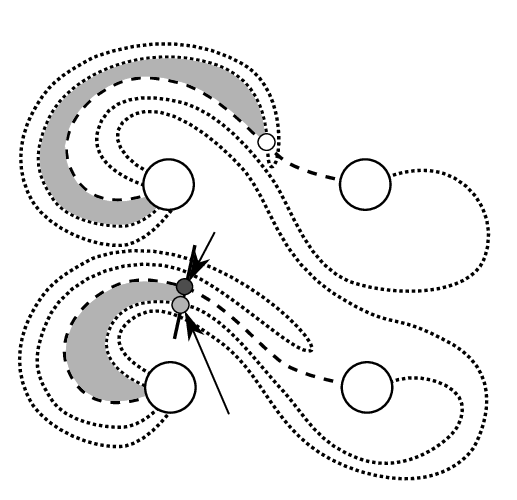}}
\caption[Components of 3-gons for $g_{\beta}^1$ in the proof of Lemma \ref{lem:RredR}]{ Some components of 3-gons associated to $g_{\beta}^1$.  Parts (a) and (b) show possible components of $\psi_{\beta}^{1+}$ and (c) and (d) show possible components of $\psi_{\beta}^{1-}$.\label{fig:rb1tri}}
\end{figure}

Figure \ref{fig:rb1tri} illustrates the component involving $x$ for each 3-gon $\psi_{\beta}^{1,\pm}$ associated to $g_{\beta}^1$.  Other components are as in Figure \ref{fig:isotri}.  The definitions of $g_{\beta}^i$ for $2 \leq i \leq n-3$ are completely analogous to those for $g_{\alpha}^i$, with associated 3-gons as in Figures \ref{fig:isotri} and \ref{fig:hstri}.

Notice that the last two handleslides $\ba^{n-2} \mapsto \ba^{n-1}$ and $\bb^{n-2} \mapsto \bb^{n-1}$ are inverses of handleslides which are of the form depicted in Figure \ref{fig:hsad}.  The injections $g_{\alpha}^{n-2}: \tor{\ba^{n-1}} \cap \tor{\bb^{n-2}} \rightarrow \tor{\ba^{n-2}} \cap \tor{\bb^{n-2}}$ and $g_{\beta}^{n-2}: \tor{\ba^{n-1}} \cap \tor{\bb^{n-1}} \rightarrow \tor{\ba^{n-1}} \cap \tor{\bb^{n-2}}$ can thus be defined in a way that is analogous to the above definition of $g_{\alpha}^i$ for $2 \leq i \leq n-3$.

It is easily verified that
$$ \text{Im} \left( g_{\beta}^{n-3} \circ \ldots \circ g_{\beta}^{1} \circ g_{\alpha}^{n-3} \circ \ldots \circ g_{\alpha}^{1} \right) \subseteq \text{Im}\left( g_{\alpha}^{n-2} \circ g_{\beta}^{n-2} \right).$$
As a result, we can define
$$ g = \left(  \left( g_{\alpha}^{n-2} \circ g_{\beta}^{n-2} \right)^{-1} \circ g_{\beta}^{n-3} \circ \ldots \circ g_{\beta}^{1} \circ g_{\alpha}^{n-3} \circ \ldots \circ g_{\alpha}^{1} \right): \tor{\ba} \cap \tor{\bb} \rightarrow \tor{\ba'} \cap \tor{\bb'}.$$
We'll break out analysis of gradings into two cases, depending on the form of the initial element of $\Zcal$.

First consider $\bw x \in \Zcal$, where $x \in \alpha_{n-1} \cap \beta_{n-1}$ and $\bw = \left\{ w_1, \ldots, w_{n-2} \right\}$.  Local pictures of $\mathcal{F}$ and $\mathcal{F}'$ can be seen in Figures \ref{fig:red1a} and Figure \ref{fig:red1b}, respectively.

\begin{figure}[h!]
\centering
\begin{minipage}[c]{.55\linewidth}
\labellist
\small
\pinlabel* $x$ at 200 50
\endlabellist
\includegraphics[height = 20mm]{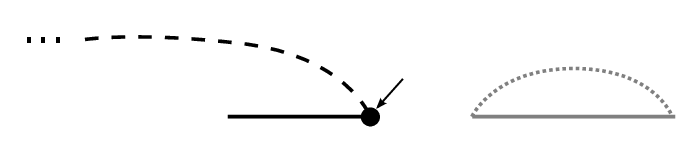}
\end{minipage}
\begin{minipage}[c]{.35\linewidth}
\caption[The fork diagram $\Fcal$ in case I of Lemma \ref{lem:RredR}]{The fork diagram $\mathcal{F}$.  The solid arc is $\alpha_{n-1}$ and the dashed arc is $\beta_{n-1}$.  The omitted arcs $\alpha_{n}$ and $\beta_{n}$ are grayed. \label{fig:red1a}}
\end{minipage}
\end{figure}

\begin{figure}[h!]
\centering
\begin{minipage}[c]{.55\linewidth}
\labellist
\small
\pinlabel* $z$ at 190 115
\endlabellist
\includegraphics[height = 35mm]{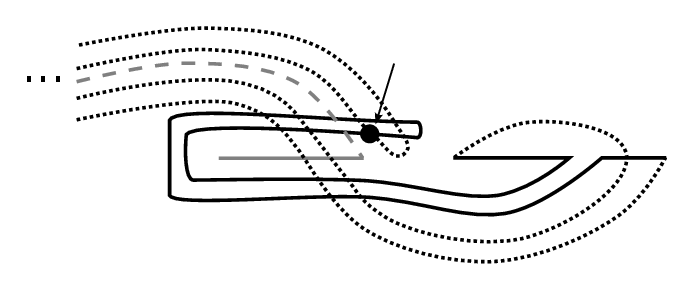}
\end{minipage}
\begin{minipage}[c]{.35\linewidth}
\caption[The fork diagram $\Fcal'$ in case I of Lemma \ref{lem:RredR}]{The fork diagram $\mathcal{F}'$.  The solid arc is $\alpha'_{n}$ and the dotted arc is $\beta'_{n}$.  The omitted arcs $\alpha_{n-1}$ and $\beta_{n-1}$ are grayed. \label{fig:red1b}}
\end{minipage}
\end{figure}

Figure \ref{fig:red1hd} shows local regions of the diagram $\h^{1,1}$ (resp. $\h^{n,n}$) covering a neighborhood of the puncture $\mu_{2n-2}$ in the fork diagrams $\Fcal$ (resp. $\Fcal'$).  It is easily verified that either $g(\bw x) = \bw z$ or $g(\bw x) = \bw z'$.

\begin{figure}[h]
\centering
\begin{minipage}[h]{.5\linewidth}
\subfigure[$\h^{1,1}$]
{
\labellist
\small
\pinlabel* $x$ at 120 120
\endlabellist
\includegraphics[height = 36mm]{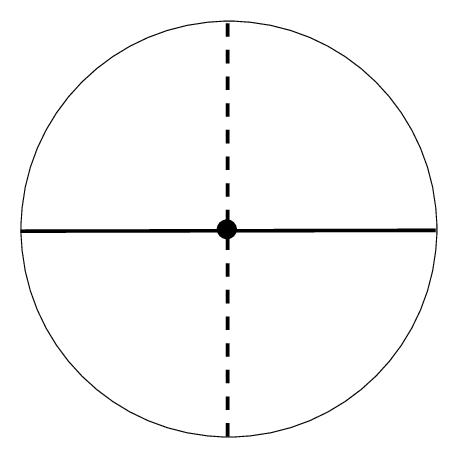}\label{fig:red1hd1}}\quad
\subfigure[$\h^{n,n}$]
{
\labellist
\small
\pinlabel* $z$ at 110 140
\pinlabel* $z'$ at 115 78
\endlabellist
\includegraphics[height =36mm]{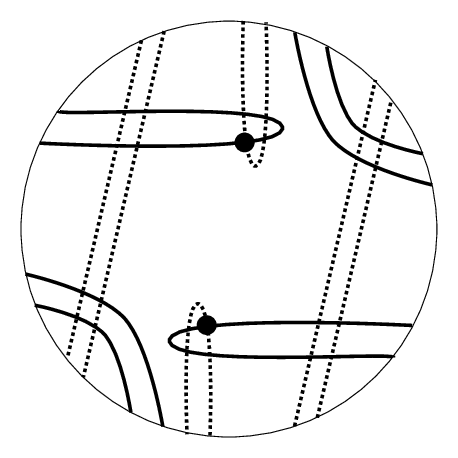}\label{fig:red1hd2}}
\end{minipage}
\begin{minipage}[c]{.45\linewidth}
\caption[Heegaard diagrams covering Figures \ref{fig:red1a} and \ref{fig:red1b}]{Local pictures of diagrams covering a neighborhood of the puncture $\mu_{2n-2}$ \label{fig:red1hd}}
\end{minipage}
\end{figure}


\begin{figure}[h]
\centering
\begin{minipage}[c]{.55\linewidth}
\labellist
\small
\pinlabel* $z$ at 220 160
\endlabellist
\includegraphics[height = 40mm]{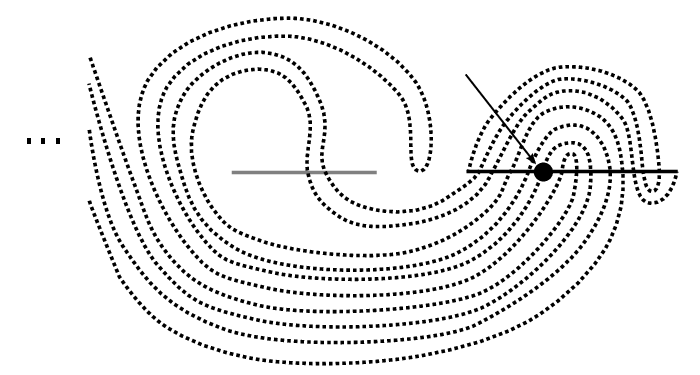}
\end{minipage}
\begin{minipage}[c]{.40\linewidth}
\caption{Result of an isotopy on Figure \ref{fig:red1b}}
\label{fig:red1biso}
\end{minipage}
\end{figure}

However, we should justify that $\red{R}(\bw z) = \red{R}(\bw x)$.  Strictly speaking, we should calculate the gradings for $\bw z$ after performing an isotopy on $\mathcal{F}'$ so that $\alpha'_{n}$ is a horizontal joining $\mu_{2n-1}$ and $\mu_{2n}$; the result is shown in Figure \ref{fig:red1biso}.

One can verify using the fork diagrams in Figured \ref{fig:red1a} and \ref{fig:red1biso} that indeed $\red{Q}(\bw z) = \red{Q}(\bw x)$, $\red{ P}(\bw z) = \red{ P}(\bw x)$, and $\red{T}(\bw z) = \red{T}(\bw x)$.  So, $g$ preserves $\red{R}$ in this case.

Now instead let $\bz x y \in \Zcal$ with $x \in \alpha_{n-1} \cap \beta_{j}$ and $y \in \alpha_{k} \cap \beta_{n-1}$ (where $j,k < n-1$).  Local regions of the fork diagram $\mathcal{F}$ containing possible choices for $x$ and $y$ are shown in Figure \ref{fig:red2a}, and corresponding local regions of $\mathcal{F}'$ are shown in Figure \ref{fig:red2b}.  Corresponding regions from the Heegaard diagrams $\h^{1,1}$ and $\h^{n,n}$ are shown in Figures \ref{fig:red2hd1} and \ref{fig:red2hd2}.  One can see that
\begin{align*}
g(\bz x_1 y_1) &\in \left\{ \bz u_1 v_1, \bz u_1' v_1, \bz u_1 v_1', \bz u_1' v_1' \right\}, \qquad
g(\bz x_1 y_2) &\in \left\{ \bz u_1 v_2, \bz u_1' v_2 \right\}, \qquad
g(\bz x_1 y_2') &\in \left\{ \bz u_1 v_2', \bz u_1' v_2' \right\},\\
g(\bz x_2 y_1) &\in \left\{ \bz u_2 v_1, \bz u_3' v_1, \bz u_2 v_1', \bz u_3' v_1' \right\}, \qquad
g(\bz x_2 y_2) &\in \left\{ \bz u_2 v_2, \bz u_3' v_2 \right\}, \qquad
g(\bz x_2 y_2') &\in \left\{ \bz u_2 v_2', \bz u_3' v_2' \right\},\\
g(\bz x_2' y_1) &\in \left\{ \bz u_2' v_1, \bz u_3 v_1, \bz u_2' v_1', \bz u_3 v_1' \right\}, \qquad
g(\bz x_2' y_2) &\in \left\{ \bz u_2' v_2, \bz u_3 v_2 \right\}, \qquad
g(\bz x_2' y_2') &\in \left\{ \bz u_2' v_2', \bz u_3 v_2' \right\}
\end{align*}

\begin{figure}[h!]
\centering
\subfigure[$x_{i} \in \alpha_{n-1} \cap \beta_{j_{i}}$, $i = 1,2$]
{
\labellist
\small
\pinlabel* $x_{1}$ at 75 80
\pinlabel* $x_{2}$ at 180 105
\endlabellist
\includegraphics[height = 30mm]{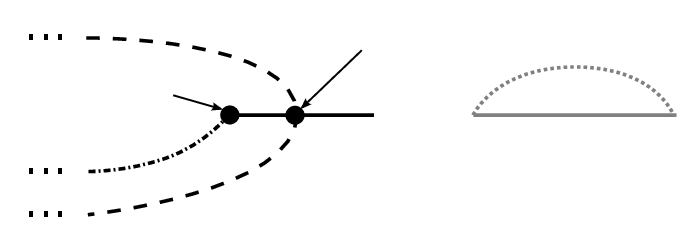}}\quad
\subfigure[$y_{i} \in \alpha_{k_{i}} \cap \beta_{n-1}$, $i = 1,2$]
{
\labellist
\small
\pinlabel* $y_{1}$ at 80 125
\pinlabel* $y_{2}$ at 185 110
\endlabellist
\includegraphics[height =30mm]{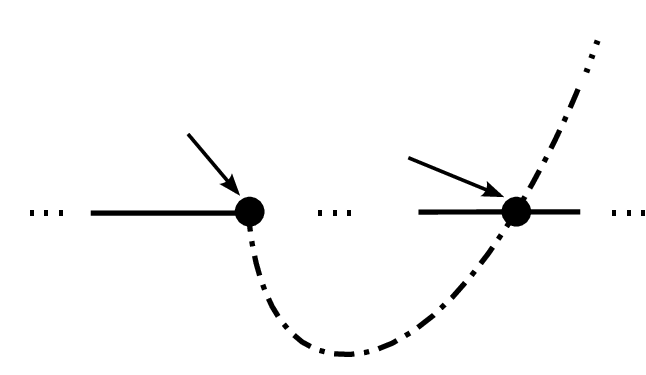}}
\caption[Fork diagram in case II of Lemma \ref{lem:RredR}]{Local pictures of $\mathcal{F}$ appearing case II of the proof of Lemma \ref{lem:RredR}.  One could choose $x$ to be either $x_1$ or $x_2$ and $y$ to be either $y_1$ or $y_2$.}
\label{fig:red2a}
\end{figure}

\begin{figure}[h!]
\centering
\subfigure[Elements of $\alpha_{n-1} \cap \beta_{j_{i}}$, $i = 1,2$]
{
\labellist
\small
\pinlabel* $u_{1}$ at 130 20
\pinlabel* $u_{2}$ at 185 100
\pinlabel* $u_{3}$ at 165 18
\endlabellist
\includegraphics[height = 30mm]{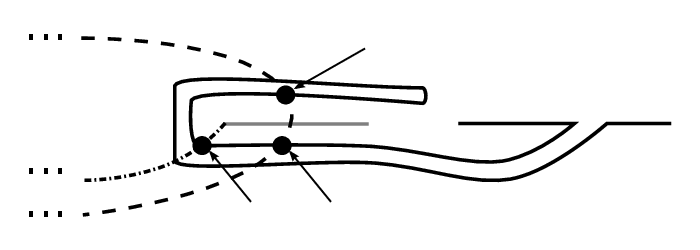}\label{fig:red2b1}}
\subfigure[Elements of $\alpha_{k_{i}} \cap \beta_{n-1}$, $i = 1,2$]
{
\labellist
\small
\pinlabel* $v_{1}$ at 60 50
\pinlabel* $v_{2}$ at 190 115
\endlabellist
\includegraphics[height =30mm]{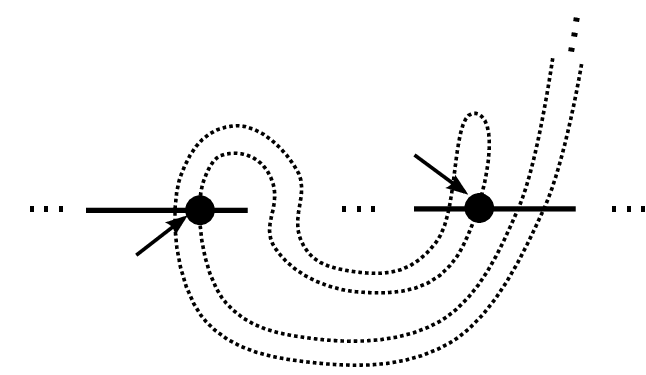}}\quad
\caption[The fork diagram $\Fcal'$ in case II of Lemma \ref{lem:RredR}]{Local pictures of the fork diagram $\mathcal{F}'$}
\label{fig:red2b}
\end{figure}

\begin{figure}[h!]
\centering
\subfigure[Near $\mu_{2n-3}$]
{
\labellist
\small
\pinlabel* $x_{1}$ at 165 75
\pinlabel* $x_{2}$ at 60 140
\pinlabel* $x'_{2}$ at 170 135
\endlabellist
\includegraphics[height =36mm]{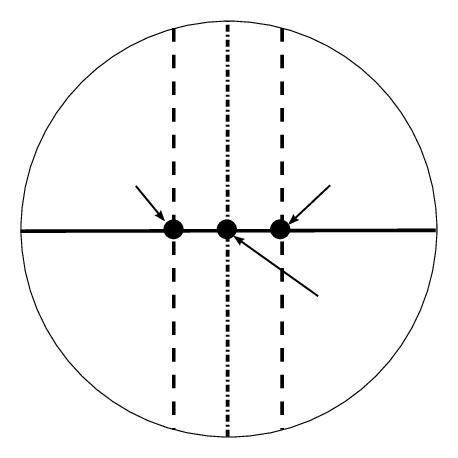}}\quad
\subfigure[Near $\mu_{2k_{1}-1}$]
{
\labellist
\small
\pinlabel* $y_{1}$ at 125 125
\endlabellist
\includegraphics[height = 36mm]{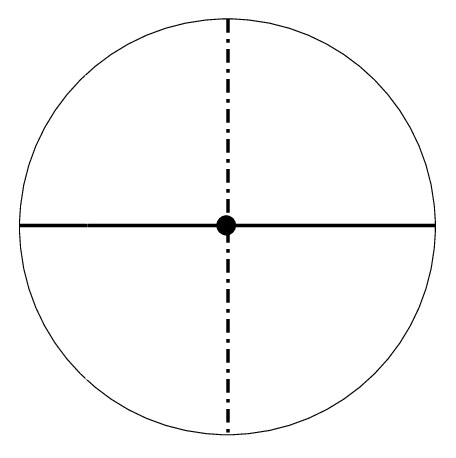}}\quad
\subfigure[Near $\mu_{2k_{2}-1}$]
{
\labellist
\small
\pinlabel* $y_{2}$ at 55 125
\pinlabel* $y'_{2}$ at 160 130
\endlabellist
\includegraphics[height = 36mm]{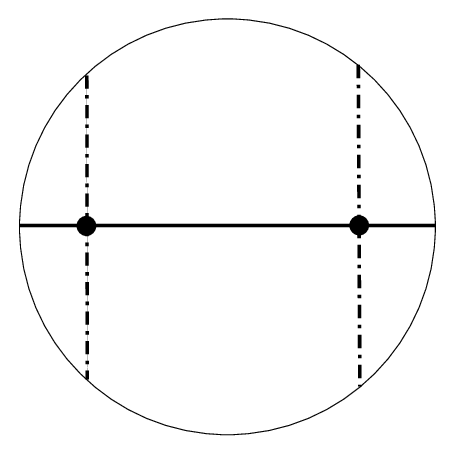}}
\caption[Heegaard diagrams covering Figure \ref{fig:red2a}]{Local pictures of the Heegaard diagram $\h_{n}$}
\label{fig:red2hd1}
\end{figure}

\begin{figure}[h!]
\centering
\subfigure[Near $\mu_{2n-3}$]
{
\labellist
\small
\pinlabel* $u'_{3}$ at 60 160
\pinlabel* $u'_{1}$ at 155 175
\pinlabel* $u'_{2}$ at 180 152
\pinlabel* $u_{2}$ at 45 70
\pinlabel* $u_{1}$ at 70 38
\pinlabel* $u_{3}$ at 165 67
\endlabellist
\includegraphics[height = 36mm]{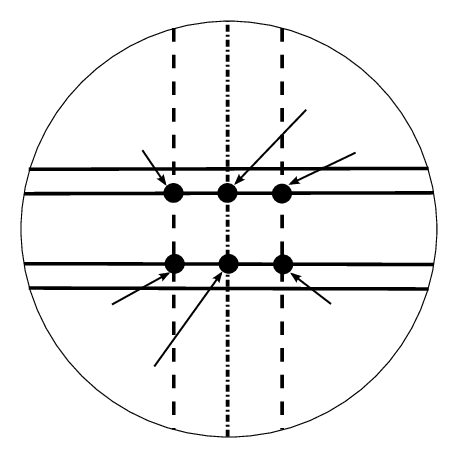}}\quad
\subfigure[Near $\mu_{2k_{1}-1}$]
{
\labellist
\small
\pinlabel* $v_{1}$ at 95 153
\pinlabel* $v'_{1}$ at 120 55
\endlabellist
\includegraphics[height = 36mm]{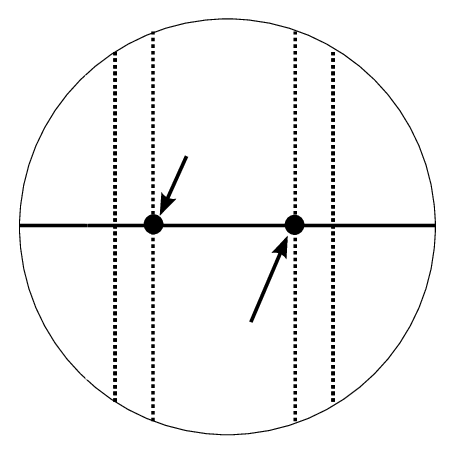}}\quad
\subfigure[Near $\mu_{2k_{2}-1}$]
{
\labellist
\small
\pinlabel* $v_{2}$ at 100 153
\pinlabel* $v'_{2}$ at 115 55
\endlabellist
\includegraphics[height = 36mm]{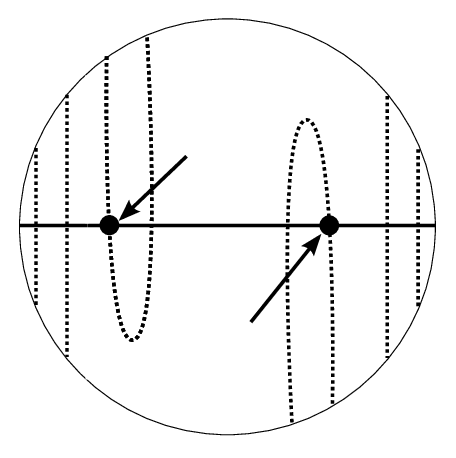}}
\caption[Heegaard diagrams covering Figure \ref{fig:red2b}]{Local pictures of the Heegaard diagram $\h'_{(n-1)}$}
\label{fig:red2hd2}
\end{figure}

%
%
%

\begin{figure}[h!]
\centering
\begin{minipage}[c]{.50\linewidth}
\labellist
\small
\pinlabel* $u_{3}$ at 195 80
\pinlabel* $u_{1}$ at 210 120
\pinlabel* $u_{2}$ at 310 140
\endlabellist
\includegraphics[height = 30mm]{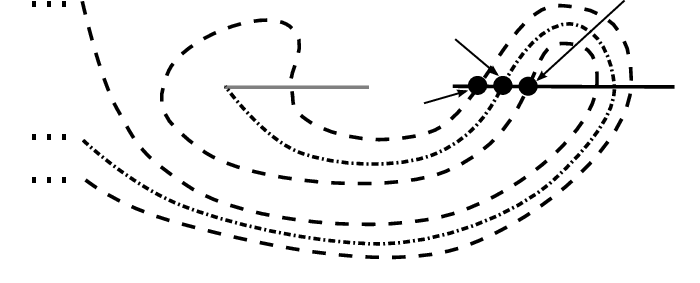}
\end{minipage}
\begin{minipage}[c]{.45\linewidth}
\caption{The result of an isotopy on Figure \ref{fig:red2b1}}
\label{fig:red2biso}
\end{minipage}
\end{figure}

Figure \ref{fig:red2biso} shows the result of the isotopy making $\alpha'_{n}$ horizontal.  One can verify that by examining Figures \ref{fig:red2a} and \ref{fig:red2biso} that for $1 \leq i \leq 2$, $\red{T}( \bz u_{1} v_{i}) = \red{T}( \bz x_{1} y_{i})$ and $\red{T}( \bz u_{2} v_{i}) = \red{T}( \bz u_{3} v_{i})  = \red{T}( \bz x_{2} y_{i})$.  Additionally,
\begin{align*}
\left(\red{P}^{*} - \red{Q}^{*}\right)(u_{2}) = \left(\red{P}^{*} - \red{Q}^{*}\right)(u_{3}) &= \left(\red{P}^{*} - \red{Q}^{*}\right)(x_{2}) + 1,
\quad\left(\red{P}^{*} - \red{Q}^{*}\right)(u_{1}) = \left(\red{P}^{*} - \red{Q}^{*}\right)(x_{1}) + 1\\
\text{and} \quad \left(\red{P}^{*} - \red{Q}^{*}\right)(v_{i}) &= \left(\red{P}^{*} - \red{Q}^{*}\right)(y_{i}) + 1\quad \text{for} \quad 1 \leq i \leq 2.
\end{align*}
So, $g$ preserves $\red{R}$ in this case as well.

\subsection{Some technical facts about filtered complexes of vector spaces}\label{sec:cancel}

In light of the ``cancellation rule'' provided by Proposition \ref{prop:cancel}, Proposition \ref{prop:RredR} indicates that the filtered chain homotopy type of the reduced filtered complex determines that of the unreduced filtered complex, and vice versa - provided that we work with coefficients in $\mathbb{F}$, a field.   Together with Theorem 1.0.1 from \cite{et:R}, this immediately implies Theorem \ref{thm:redRthm} above.  The goal of this section is to prove Proposition \ref{prop:cancel}.  First we recall some definitions related to filtrations and establish some notation.

Let $\mathbb{F}$ be a field and let $V$ be a vector space over $\mathbb{F}$ equipped with a $\mathbb{Z}$-filtration $\cdots \subseteq F_{i-1}V \subseteq F_{i}V \subseteq F_{i+1}V \subseteq \cdots$ so that $\bigcup F_{i}V = V$ and $\bigcap F_{i}V = \emptyset$.  Then recall that there is an associated graded vector space $\text{gr}(V)$ defined by
$$ \text{gr}(V) := \bigoplus_{i = -\infty}^{\infty} \text{gr}_i(V) \quad \text{where} \quad
\text{gr}_i(V) :=F_{i}V / F_{i-1}V$$
Given some $x \in V$, the \emph{filtration level} of $x$ is the integer $f(x) := \text{inf} \left\{ k \in \mathbb{Z} \big| x \in F_{k}V  \right\}$.  For each $k \in \mathbb{Z}$, there is a natural map $\iota_k:F_{k}V \rightarrow \text{gr}(V)$ given by composing the projection $F_{k}V \rightarrow F_{k}V/F_{k-1}V$ with the inclusion $F_{k}V / F_{k-1}V \rightarrow \text{gr}(V)$.  Then we can define a function $[\cdot]:V \rightarrow \text{gr}(V)$ via $[x]:=\iota_{f(x)} (x)$.

Furthermore, if $V = \oplus_{i \in \mathbb{Z}} V_i$ is a graded vector space, let $g(x) \in \mathbb{Z}$ denote the grading of a homogeneous element $x \in V$.  Recall that a filtered chain complex $(C, \partial)$ over a field $\mathbb{F}$ is a graded vector space $C$ over $\mathbb{F}$ together with a differential $\partial: C \rightarrow C$ such that $\partial^2 = 0$ and $\partial(F_k C_i) \subset F_k C_{i-1}$ for all $i, k \in \mathbb{Z}$.

\begin{df}
Let $C$ be a finite-dimensional $\mathbb{Z}$-graded, $\mathbb{Z}$-filtered chain complex of vector spaces over a field $\mathbb{F}$.  We say that $C$ is \emph{reduced} if for every $x \in C$, $f(\partial(x)) < f(x)$, i.e. the differential strictly lowers the filtration level.
\end{df}

\begin{df}
Let $V$ be a finite-dimensional $\mathbb{Z}$-filtered vector space over a field $\mathbb{F}$.
\begin{enumerate}[(i)]
\item A set $\left\{ x_1, \ldots, x_n \right\} \subset V$ is \emph{filtered linearly independent} if the set $\left\{ [x_1], \ldots, [x_n]\right\} \subset \text{gr}(V)$ is linearly independent in $\text{gr}(V)$.
\item A basis $\left\{ x_1, \ldots, x_n \right\}$ for $V$ is a \emph{filtered basis} if the set $\left\{ [x_1], \ldots, [x_n] \right\}$ is a basis for $\text{gr}(V)$.
\end{enumerate}
\end{df}

We visualize a filtered chain complex on the 1-dimensional lattice $\mathbb{Z}$ by choosing a filtered basis $\left\{x_1, \ldots x_n \right\}$ and plotting each generator $x_i$ at the position $f(x_i) \in \mathbb{Z}$.  The differential appears as a system of arrows, with one arrow pointing from $x_i$ to each generator appearing in the expression $\partial(x_i)$ (for each $i$).  We'll refer to these arrows as the \emph{components} of the differential and we'll say that a generator $x_i$ is \emph{incident} to some component if that component either emanates or terminates at $x_i$.

\begin{df}
Let $C$ be a finite-dimensional $\mathbb{Z}$-graded, $\mathbb{Z}$-filtered chain complex of vector spaces over a field $\mathbb{F}$.  Then a filtered basis $\left\{ x_1, \ldots, x_n \right\}$ for $C$ is \emph{simplified} if each $x_i$ is incident to at most one component of the differential.
\end{df}

We'll need several lemmas to prove Proposition \ref{prop:cancel}.  The following result is proved in \cite{lot}:

\begin{lem}[Proposition 11.52 from \cite{lot}]\label{lem:reduced}
Let $C$ be a finite-dimensional $\mathbb{Z}$-graded, $\mathbb{Z}$-filtered chain complex of vector spaces over a field $\mathbb{F}$.  Then there exists a reduced  filtered complex $C'$ which is filtered chain homotopy equivalent to $C$.  Furthermore, $C'$ has a filtered basis that is simplified.
\end{lem}

The following two lemmas are proved at the end of this section.  The proof of Proposition \ref{prop:cancel} relies heavily on \ref{lem:ss}, whose proof in turn relies on Lemma \ref{lem:cancel}.

\begin{lem}\label{lem:cancel}
Fix a positive integer $k$.  Let $V$, $W_1$, and $W_2$ be finite-dimensional $\mathbb{Z}^k$-graded vector spaces over a field $\mathbb{F}$.
If $W_1 \otimes_{\mathbb{F}} V \cong W_2 \otimes_{\mathbb{F}} V$ as $\mathbb{Z}^k$-graded vector spaces, then in fact $W_1 \cong W_2$.
\end{lem}

The above fact is of interest in its own right as a cancellation tool for graded vector spaces.  It was presumably previously known, but the author has been unable to find a particular reference.

The following lemma will allow us to prove Proposition \ref{prop:cancel}:

\begin{lem}\label{lem:ss}
Let $C$ be a finite-dimensional $\mathbb{Z}$-graded, $\mathbb{Z}$-filtered chain complex of vector spaces over a field $\mathbb{F}$.  Then the filtered chain homotopy type of the complex $C$ is completely determined by the $\mathbb{Z}^2$-graded isomorphism types of the pages $\left\{ E^r \right\}_{r \geq 1}$ of the spectral sequence induced by $C$.
\end{lem}

\begin{proof}[Proof of Proposition \ref{prop:cancel}]
For each $i$ and for $k \geq 1$, let $E^{i, k}$ denote the $k^{th}$ page of the spectral sequence induced by the filtered complex $C_i$.
For $k \geq 1$, the $k^{th}$ page of the spectral sequence induced by the complex $C_i \otimes_{\mathbb{F}} V^{\mathbb{F}}$ is isomorphic as a $\mathbb{Z}^2$-graded group to $E^{i, k} \otimes_{\mathbb{F}} \left( \mathbb{F}(1,0) \oplus \mathbb{F}(0,0) \right)$, and the $k^{th}$ page of the spectral sequence induced by $C_i \otimes_{\mathbb{F}} W^{\mathbb{F}}$ is isomorphic to $E^{i, k} \otimes_{\mathbb{F}} \left( \mathbb{F}(0,0)\right)^2$.  So, regardless of whether $X = V^{\mathbb{F}}$ or $X = W^{\mathbb{F}}$, Lemma \ref{lem:cancel} implies that for each $k \geq 1$, $E^{1,k} \cong E^{2,k}$ as $\mathbb{Z}^2$-graded vector spaces.  Now Lemma \ref{lem:ss} implies that $C_1$ and $C_2$ have the same filtered chain homotopy type. 
\end{proof}

\begin{proof}[Proof of Lemma \ref{lem:cancel}]
For concreteness, let $V = \displaystyle \bigoplus_{j = 1}^{n} \mathbb{F} \left( \bda_j \right)$, where $\bda_j \in \mathbb{Z}^k$.  After relabeling if necessary, assume that $m \in \{ 1, 2, \ldots, n \}$ satisfies $\bda_j = \bda_1$ for $j \leq m$ and $\bda_{j} \neq \bda_1$ for $j > m$.

For $i \in \{ 1,2 \}$, the $\mathbb{Z}^k$-graded isomorphism type of $W_i$ is determined by the function $f_i:\mathbb{Z}^k \rightarrow \mathbb{Z}_{\geq 0}$, where $f_i(\bx): = \text{dim}_{\mathbb{F}}\left( \left( W_i\right)_{\bx} \right)$.  Define an analogous function $g_i$ associated to $W_i \otimes_{\mathbb{F}} V$.  Notice that for each $\bx \in \mathbb{Z}^k$,
\begin{gather}\label{bigrading}
\begin{aligned}
g_i(\bx) &= \sum_{j = 1}^n f_i(\bx - \bda_j) \quad \text{and so}\\
mf_i(\bx) &= g_i(\bx + \bda_1) - \sum_{j = m+1}^{n} f_i\left( \bx + \bda_1 - \bda_j \right)
\end{aligned}
\end{gather}
Now Equation \ref{bigrading} implies that $f_2(\bx)- f_1(\bx)$ only depends on the equivalence class of $\bx$ in the quotient $\mathbb{Z}^k/H$, where $H:= \text{Span} \left\{ \bda_{1} - \bda_{j} | m < j \leq n  \right\}$.

Since $W_1$ and $W_2$ are finite-dimensional, $f_2 - f_1 = 0$ outside of a compact set in $\mathbb{Z}^k$.  Therefore, in the case that $m < n$ (i.e. not all of the $\bda_j$ are equal), $f_2 - f_1 \equiv 0$.

If $m = n$, Equation \ref{bigrading} implies that $0 \equiv g_2 - g_1 \equiv n(f_2 - f_1)$ and so $f_1 \equiv f_2$ since $\mathbb{Z}$ is torsion-free.
\end{proof}

Prior to proving Lemma \ref{lem:ss}, we'll define some particular filtered chain complexes.  Given integers $s,t \in \mathbb{Z}$, let $V(s,t)$ denote the $\mathbb{Z}$-filtered, $\mathbb{Z}$-graded chain complex of vector spaces given by $V(s,t) := \left(\F{x} , d \equiv 0 \right)$ where $x$ has grading $g(x) = s$ and filtration level $f(x) = t$.  For integers $s,t$ and positive integer $\delta$, let $W(s,t,\delta)$ denote the $\mathbb{Z}$-filtered, $\mathbb{Z}$-graded chain complex of vector spaces given by $ W(s,t,\delta) := \left( \F{x} \oplus \F{y}, d \right)$ with $(g,f)(x) = (s,t)$, $(g,f)(y) = (s-1,t-\delta)$, $d(x) = y$, and $d(y)=0$.

\begin{proof}[Proof of Lemma \ref{lem:ss}]
By Lemma \ref{lem:reduced} above, $C$ is filtered chain homotopy equivalent to some reduced filtered chain complex $C'$, and $C'$ has a simplified basis.  Then we in fact have that
 \begin{equation}\label{eqn:simp}
 C' = \left( \bigoplus_{i = 1}^{n} V(s_i, t_i) \right) \oplus \left( \bigoplus_{j=1}^{m} \bigoplus_{k = 1}^{l_j} W(a_{j,k}, b_{j,k}, \delta_j) \right)
 \end{equation}
 for some positive integers $m$, $n$, $\delta_j$ and $l_j$ (for each $1 \leq j \leq m)$, some integers $s_i$, $t_i$ for each $1 \leq i \leq n$, and some integers $a_{j,k}$, $b_{j,k}$ for each $1 \leq j \leq m$ and $1 \leq k \leq l_j$.  It is assumed that $\delta_1 < \delta_2 < \ldots < \delta_m$, and recall that $\delta_1 > 0$ because $C'$ is reduced.  Note that the summands $V(s_i, t_i)$ in Equation \ref{eqn:simp} are generated by elements in the simplified filtered basis for $C'$ which aren't incident to any components of the differential, and the components of the summands $W(a_{j,k}, b_{j,k}, \delta_j)$ are generated by elements each incident to exactly one component of the differential.
 
Let $E^r$ denote the pages of the spectral sequence induced by the filtered complex $C'$.  First notice that since $C'$ is reduced, $E^0 \cong E^1$ as $\mathbb{Z}^2$-graded vector spaces.  Furthermore, we have that
$$ E^{\infty} \cong \bigoplus_{i = 1}^n \mathbb{F}(t_i, s_i - t_i)$$
Recall that for each $r$, $E^{r+1}$ is isomorphic to a $\mathbb{Z}^2$-graded subspace of $E^r$, and the quotients $E^{r+1}/E^r$ are nonzero exactly when $r = \delta_j$ for some $j$.  Therefore, the higher pages determine the numbers $\delta_j$.  Now for each $j$ with $1 \leq j \leq m$,
\begin{gather}
\begin{aligned}\label{eqn:alg1}
\frac{E^{\delta_j + 1}}{E^{\delta_j}} &\cong
\bigoplus_{k = 1}^{l_j} \left( \mathbb{F}(b_{j,k},a_{j,k} - b_{j,k}) \oplus \mathbb{F}(b_{j,k} - \delta_j, a_{j,k} -b_{j,k} + \delta_j-1) \right)\\
&\cong \left( \bigoplus_{k=1}^{l_j} \mathbb{F}(b_{j,k}, a_{j,k} - b_{j,k}) \right)
\otimes_{\mathbb{F}} \left( \mathbb{F}(0,0) \oplus \mathbb{F}(-\delta_j, \delta_j - 1) \right)
\end{aligned}
\end{gather}

Now since we already know $\delta_j$, then by Lemma \ref{lem:cancel}, the $\mathbb{Z}^2$-graded isomorphism type of the last expression in Equation \ref{eqn:alg1} determines the $\mathbb{Z}^2$-graded isomorphism type of
$$\left( \bigoplus_{k=1}^{l_j} \mathbb{F}(b_{j,k}, a_{j,k} - b_{j,k}) \right)$$
which in turn determines the numbers $b_{j,k}$ and $a_{j,k}$ for all $k$.  So, we now know the filtered chain isomorphism type of the filtered chain complex $C'$.
\end{proof}

\section{APPLICATIONS AND COMPUTATIONS}

\subsection{Operations on knots}

\subsubsection{Connected sums}\label{sec:sum}

Recall that \OS showed in \cite{os:disk2} that the relatively-graded Heegaard Floer chain complexes satisfy a K\"unneth-type relationship under connected sums of $3$-manifolds.  We state the result for $\widehat{CF}$:
\begin{thm}[Proposition 6.1 from \cite{os:disk2}]\label{thm:ossum}
Let $M_{1}$ and $M_{2}$ be oriented $3$-manifolds with $\text{Spin}^{c}$ structures $\mathfrak{s_{i}} \in \text{Spin}^{c}(M_{i})$ for $i= 1,2$.  Then 
\begin{equation*}
\widehat{CF}(M_{1} \# M_{2} , \mathfrak{s}_{1} \# \mathfrak{s}_{2}) \cong
	\widehat{CF}(M_{1}, \mathfrak{s_{1}})
		\otimes_{\mathbb{Z}}
	\widehat{CF}(M_{2}, \mathfrak{s_{2}}).
\end{equation*}
\end{thm}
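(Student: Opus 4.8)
The plan is to prove the isomorphism by building an explicit pointed Heegaard diagram for the connected sum out of diagrams for the two factors, and then analyzing how holomorphic disks degenerate as the connect-sum neck is stretched. First I would fix admissible pointed Heegaard diagrams $(\Sigma_{1}, \boldsymbol{\alpha}^{1}, \boldsymbol{\beta}^{1}, z_{1})$ and $(\Sigma_{2}, \boldsymbol{\alpha}^{2}, \boldsymbol{\beta}^{2}, z_{2})$ for $M_{1}$ and $M_{2}$, of genus $g_{1}$ and $g_{2}$. Forming the connected sum of the surfaces by excising small disks around $z_{1}$ and $z_{2}$ and gluing along the resulting boundary circles produces a surface $\Sigma = \Sigma_{1} \# \Sigma_{2}$ of genus $g_{1} + g_{2}$, with attaching curves $\boldsymbol{\alpha} = \boldsymbol{\alpha}^{1} \cup \boldsymbol{\alpha}^{2}$ and $\boldsymbol{\beta} = \boldsymbol{\beta}^{1} \cup \boldsymbol{\beta}^{2}$, together with a single basepoint $z$ placed in the connect-sum neck. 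One checks directly that this is an admissible pointed Heegaard diagram for $M_{1} \# M_{2}$.

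On the level of generators there is a natural bijection $\mathbb{T}_{\boldsymbol{\alpha}} \cap \mathbb{T}_{\boldsymbol{\beta}} \cong (\mathbb{T}_{\boldsymbol{\alpha}^{1}} \cap \mathbb{T}_{\boldsymbol{\beta}^{1}}) \times (\mathbb{T}_{\boldsymbol{\alpha}^{2}} \cap \mathbb{T}_{\boldsymbol{\beta}^{2}})$, sending a generator $\mathbf{x}$ to the pair $(\mathbf{x}_{1}, \mathbf{x}_{2})$ of its components in the two halves. I would then verify the $\text{Spin}^{c}$-bookkeeping: since $H^{2}(M_{1} \# M_{2}) \cong H^{2}(M_{1}) \oplus H^{2}(M_{2})$ and the basepoint $z$ lies in the neck, the basepoint-determined assignment satisfies $\mathfrak{s}_{z}(\mathbf{x}) = \mathfrak{s}_{z_{1}}(\mathbf{x}_{1}) \# \mathfrak{s}_{z_{2}}(\mathbf{x}_{2})$, so that restricting to the summand $\mathfrak{s}_{1} \# \mathfrak{s}_{2}$ picks out exactly the pairs with $\mathbf{x}_{i} \in \mathfrak{s}_{i}$. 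This establishes the claimed module-level identification $\widehat{CF}(M_{1} \# M_{2}, \mathfrak{s}_{1} \# \mathfrak{s}_{2}) \cong \widehat{CF}(M_{1}, \mathfrak{s}_{1}) \otimes_{\mathbb{Z}} \widehat{CF}(M_{2}, \mathfrak{s}_{2})$ as graded groups.

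The heart of the argument is matching the differentials, and this is where I expect the main obstacle to lie. In the hat theory one counts index-one holomorphic disks $\phi \in \pi_{2}(\mathbf{x}, \mathbf{y})$ with $n_{z}(\phi) = 0$. Because $z$ sits in the neck, the condition $n_{z}(\phi) = 0$ forces the image of such a disk to avoid the connect-sum region entirely, so the boundary never crosses between the two halves. I would then stretch the neck and carry out a Gromov-compactness and gluing analysis: in the long-neck limit, every index-one disk for the summed diagram decomposes as a pair $(\phi_{1}, \phi_{2})$ of disks in the two factors with $n_{z_{i}}(\phi_{i}) = 0$, and conversely every gluable such pair arises uniquely. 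Since the Maslov index is additive, $\mu(\phi) = \mu(\phi_{1}) + \mu(\phi_{2})$, and since a disk admitting a holomorphic representative has nonnegative index, the index-one disks are precisely those with one factor of index one and the other the constant map. This yields the Leibniz rule that governs the tensor-product differential.

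Finally, to upgrade from the $\mathbb{Z}/2\mathbb{Z}$ statement to the claimed isomorphism over $\mathbb{Z}$, I would check that a coherent orientation system on the summed diagram can be chosen so that the gluing map identifies the product orientation on $\mathcal{M}(\phi_{1}) \times \mathcal{M}(\phi_{2})$ with the orientation on $\mathcal{M}(\phi)$, introducing the standard Koszul sign in the second Leibniz term. The principal difficulty is thus the analytic degeneration-and-gluing step, together with the compatibility of orientations under gluing; by contrast, the module-level bijection and the $\text{Spin}^{c}$ bookkeeping are comparatively routine.
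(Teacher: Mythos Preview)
The paper does not prove this statement at all: Theorem~\ref{thm:ossum} is quoted verbatim as Proposition~6.1 of \cite{os:disk2} and used as a black box in the proof of Theorem~\ref{thm:sum}. There is therefore no ``paper's own proof'' to compare your proposal against.

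That said, your sketch is essentially the standard Ozsv\'ath--Szab\'o argument from \cite{os:disk2}: form the connected sum of pointed Heegaard diagrams in the basepoint region, identify generators and $\text{Spin}^{c}$ structures as products, and use the constraint $n_{z}(\phi)=0$ together with a neck-stretching degeneration to show the differential obeys the Leibniz rule. So while your outline is correct in spirit, be aware that the analytic gluing step and the orientation compatibility you flag as the ``main obstacle'' are genuinely nontrivial and are what \cite{os:disk2} carries out; in the present paper this is simply taken as input.
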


Recall also that for $M$ a rational homology $3$-sphere with $\mathfrak{s} \in \text{Spin}^{c}(M)$, Ozsv\'ath  and Szab\'o defined in \cite{os:abs} the correction term $d(M,\mathfrak{s}) \in \mathbb{Q}$ to be the minimal absolute grading $\tld{gr}$ of any non-torsion element in the image of $HF^{\infty}(M,\mathfrak{s})$ in $HF^{+}(M,\mathfrak{s})$.  It was shown in \cite{os:abs} that
\begin{equation*}
d(M_{1} \# M_{2}, \mathfrak{s}_{1} \# \mathfrak{s}_{2}) = d(M_{1},\mathfrak{s}_{1}) + d(M_{2},\mathfrak{s}_{2}).
\end{equation*}

It follows that when $M_{1}$ and $M_{2}$ are rational homology $3$-spheres, the $\widehat{CF}$ complexes satisfy a K\"unneth formula as absolutely-graded chain complexes with grading $\tld{gr}$, i.e.
\begin{equation}\label{eq:grkun}
\widehat{CF}_{\tld{gr} = k}(M_{1} \# M_{2}, \mathfrak{s}_{1} \# \mathfrak{s}_{2})
\cong \displaystyle \bigoplus_{i + j = k}
\bigg( \widehat{CF}_{\tld{gr} = i}(M_{1}, \mathfrak{s}_{1}) \otimes_{\mathbb{Z}} \widehat{CF}_{\tld{gr} = j}(M_{2}, \mathfrak{s}_{2}) \bigg).
\end{equation}
As stated in Theorem \ref{thm:sum}, when $M_{i} = \DBC{K_{i}}$ for $i=1, 2$, then the complexes for $M_{1}$, $M_{2}$, and $\DBC{K_{1} \# K_{2}} \cong M_{1} \# M_{2}$ satisfy a K\"unneth-type relationship as filtered complexes up to filtered chain homotopy type.
\begin{proof}[Proof of Theorem \ref{thm:sum}]
Let $K_{1}$ and $K_{2}$ be knots in $S^{3}$.  Then we can find braids $b_{1} \in \B{2m-1}$ and $b_{2} \in \B{2n-2}$ such that the plat closure of $b'_{1} = \left( b_{1} \times \sigma_{1}^{-1} \times 1\right) \in \B{2m+2}$ is a diagram $D_{1}$ for $K_{1}$ and the plat closure of $b'_{2} = (1\times b_{2} \times 1) \in \B{2n}$ is a diagram $D_{2}$ for $K_{2}$.   Then the plat closure of $b = \left(b_{1} \times \sigma_{1}^{-1} \times b_{2} \times 1 \right)\in \B{2n + 2m}$ is a diagram $D$ for $K = K_{1} \# K_{2}$ (where we view $\sigma_{1} \in \B{2}$).  These plat closures are illustrated in Figure \ref{fig:csplats}.

\begin{figure}[h]
\centering
\subfigure[$D_{1}$]
{
\labellist
\small
\pinlabel* $b_{1}$ at 70 140
\endlabellist
\includegraphics[height = 30mm]{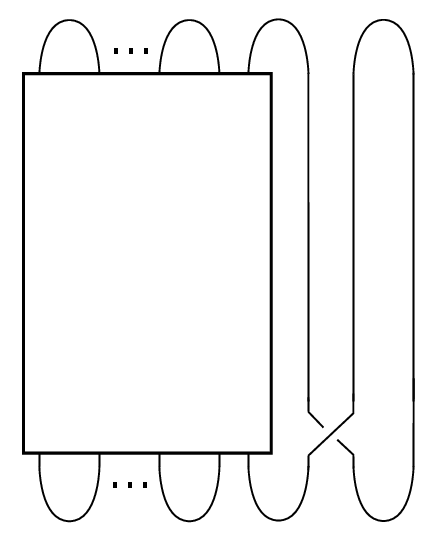}}\qquad
\subfigure[$D_{2}$]
{
\labellist
\small
\pinlabel* $b_{2}$ at 90 140
\endlabellist
\includegraphics[height = 30mm]{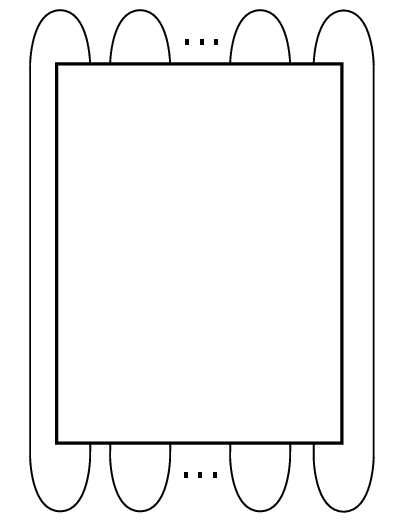}}\qquad
\subfigure[$D$]
{
\labellist
\small
\pinlabel* $b_{1}$ at 70 150
\pinlabel* $b_{2}$ at 240 150
\endlabellist
\includegraphics[height = 30mm]{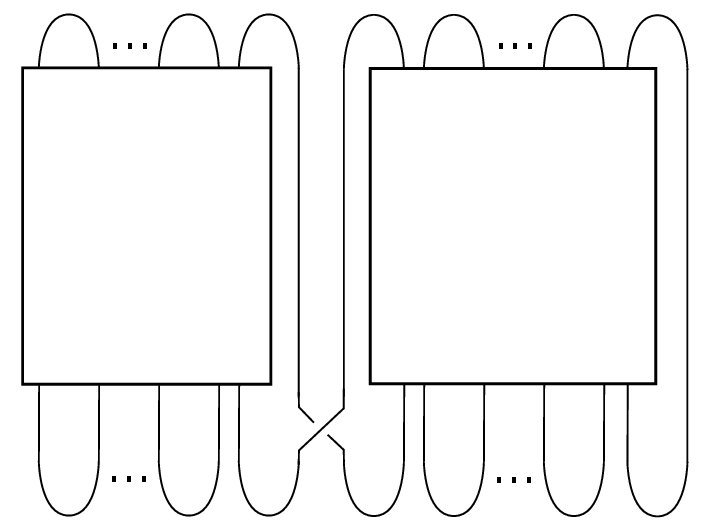}}
\caption[Plat closures associated to a connected sum]{Plat closures of $b'_{1} \in \B{2m+2}$, $b'_{2} \in \B{2n}$, and $b \in \B{2m + 2n}$}
\label{fig:csplats}
\end{figure}

\begin{figure}[h]
\centering
\subfigure[The fork diagram $\mathcal{F}_{1}$ with the arcs $\alpha_{m+1}, \beta_{m+1}$ grayed]
{
\labellist
\small
\pinlabel* $y$ at 50 40
\endlabellist
\includegraphics[height = 20mm]{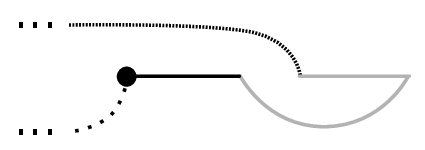}}\qquad
\subfigure[The fork diagram $\mathcal{F}_{2}$ with the arcs $\alpha_{n}, \beta_{n}$ grayed]
{
\labellist
\small
\pinlabel* $z_{1}$ at 10 22
\pinlabel* $z_{2}$ at 75 35
\endlabellist
\includegraphics[height = 20mm]{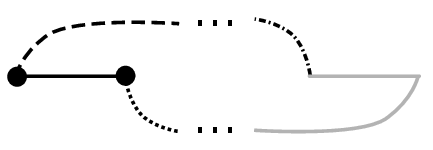}}
\caption{Local pictures of fork diagrams for summand knots $K_{i}$}
\label{fig:csflat1}
\end{figure}

\begin{figure}
\centering
\labellist
\small
\pinlabel* $y$ at 50 50
\pinlabel* $v_{1}$ at 155 50
\pinlabel* $v_{2}$ at 205 40
\endlabellist
\includegraphics[height = 24mm]{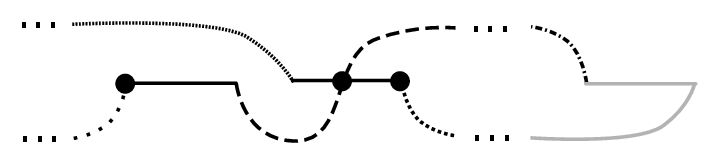}
\caption[Local picture of the fork diagram for $K_{1} \# K_{2}$]{The fork diagram $\mathcal{F}$ for $K_{1} \# K_{2}$ with the arcs $\alpha_{m+n}, \beta_{m+n}$ grayed.}
\label{fig:csflat2}
\end{figure}

Let $\mathcal{F}_{1}$, $\mathcal{F}_{2}$, and $\mathcal{F}$ denote the reducible fork diagrams induced by $b'_{1}$, $b'_{2}$, and $b$, respectively.  We choose the braids $b_{i}$ in such a way that the fork diagrams $\mathcal{F}_{i}$ have the local behavior indicated in Figure \ref{fig:csflat1};  this can always be achieved through stabilization.  The fork diagram $\mathcal{F}$ can be seen in Figure \ref{fig:csflat2}.

We'll compute the reduced gradings $\red{R}$ by omitting the pair $\alpha_{m}, \beta_{m}$ from $\mathcal{F}_{1}$, the pair $\alpha_{n}, \beta_{n}$ from $\mathcal{F}_{2}$, and the pair $\alpha_{m+n}, \beta_{m+n}$ from $\mathcal{F}$.

Figure \ref{fig:cshd} shows the Heegaard diagrams $\h_{1}$, $\h_{2}$, and $\h$ covering $\mathcal{F}_{1}$, $\mathcal{F}_{2}$, and $\mathcal{F}$, respectively.  Let the sets of attaching circles for these diagrams be denoted by $\{ \ba^{1}, \bb^{1}\}$, $\{ \ba^{2}, \bb^{2}\}$, and $\{ \ba, \bb\}$, respectively.

\begin{figure}[h]
\centering
\subfigure[$\h_{1}$]
{
\labellist
\small
\pinlabel* $y$ at 130 105
\pinlabel $a$ at 88 117
\pinlabel \rotatebox{180}{\reflectbox{$a$}} at 88 39
\pinlabel $b$ at 184 119
\pinlabel \rotatebox{180}{\reflectbox{$b$}} at 184 39
\endlabellist
\includegraphics[height = 36mm]{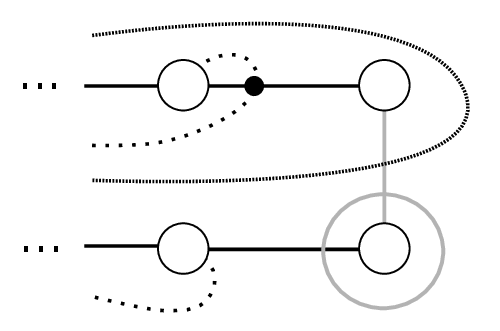}}\qquad
\subfigure[$\h_{2}$]
{
\labellist
\small
\pinlabel* $z_{1}$ at 110 25
\pinlabel* $z_{2}$ at 120 130
\pinlabel $c$ at 52 117
\pinlabel \rotatebox{180}{\reflectbox{$c$}} at 52 39
\pinlabel $d$ at 188 119
\pinlabel \rotatebox{180}{\reflectbox{$d$}} at 188 39
\endlabellist
\includegraphics[height = 36mm]{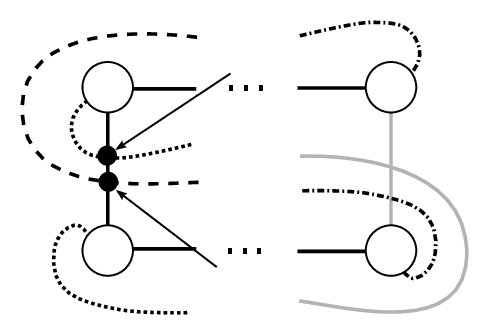}}\\
\subfigure[$\h$]
{
\labellist
\small
\pinlabel* $y$ at 145 95
\pinlabel* $v_{1}$ at 230 143
\pinlabel* $v'_{1}$ at 135 55
\pinlabel* $v_{2}$ at 230 20
\pinlabel $a$ at 88 117
\pinlabel \rotatebox{180}{\reflectbox{$a$}} at 88 39
\pinlabel $b$ at 184 119
\pinlabel \rotatebox{180}{\reflectbox{$b$}} at 184 39
\pinlabel $c$ at 280 117
\pinlabel \rotatebox{180}{\reflectbox{$c$}} at 280 39
\pinlabel $d$ at 417 119
\pinlabel \rotatebox{180}{\reflectbox{$d$}} at 417 39
\endlabellist
\includegraphics[height = 36mm]{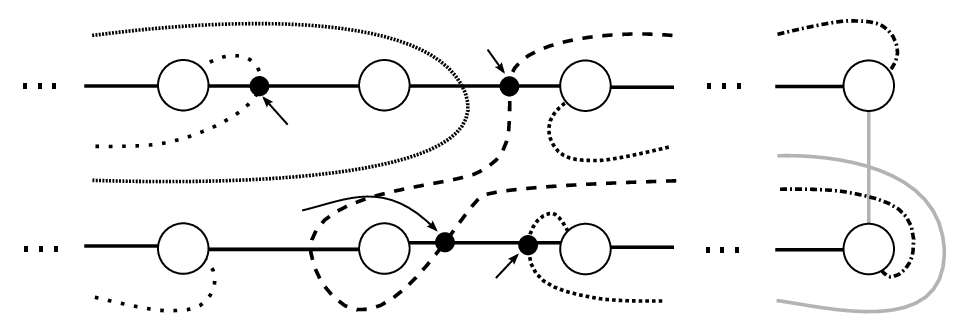}}
\caption{Heegaard diagrams depicting local connected sum behavior}
\label{fig:cshd}
\end{figure}

Recall that if the connected sum region in each Heegaard diagram is near the basepoint, then $\h_{1}$, $\h_{2}$, and $\h_{\#} = \h_{1} \# \h_{2}$ are Heegaard diagrams exhibiting the correspondences in Theorem \ref{thm:ossum} and Equation \ref{eq:grkun}.  The sets of attaching circles for $\h_{\#}$ are $\{ \ba^{\#}, \bb^{\#}\} = \{ \ba^{1} \sqcup \ba^{2}, \bb^{1} \sqcup \bb^{2} \}$.

The set $\ba$ can be obtained from the set $\ba^\#$ via a sequence of $m$ pointed handleslides
$$ \ba^{\#} = \bat_{0} \mapsto \bat^1 \mapsto \ldots \mapsto \bat^m = \ba,$$
as depicted in Figure \ref{fig:cshs} for $m = 3$.  The set $\bb$ can be obtained from the set $\bb^{\#}$ by an analogous sequence of $m$ pointed handleslides.  We then study the sequence of pointed Heegaard diagrams
\begin{equation}\label{eqn:sequence}
\h^{0,0} \mapsto \h^{1,0} \mapsto \ldots \mapsto \h^{m-1,0} \mapsto \h^{m-1,1} \mapsto \ldots \mapsto \h^{m-1,m-1} \mapsto \h^{m,m-1} \mapsto \h^{m,m}.
\end{equation}
where $\h^{i,j} = \left( \Sigma; \bat^i; \bbt^j; +\infty \right).$  Now $\h^{0,0} = \h_{\#}$ and $\h^{m,m} = \h$ are automatically admissible because they cover fork diagrams.  The first $2m-2$ handleslides in the above sequence are of the form shown in Figure \ref{fig:hsad} and thus preserve admissibility by Lemma \ref{lem:hsad}.  Although we have exhibited a single handleslide connecting $\h^{m,m-1}$ and $\h^{m,m}$, indeed $\h^{m,m-1}$ can alternately be obtained from $\h^{m,m}$ by a sequence of handleslides and isotopies like those in Lemmas \ref{lem:hsad} and \ref{lem:isoad}.  So, all intermediate diagrams in the sequence in Equation \ref{eqn:sequence} are admissible.
\begin{figure}
\centering
\subfigure[$\ba^{\#} = \bat^{0} \mapsto \bat^{1}$]{
\labellist 
\small
\pinlabel* {$a$} at 23 107
\pinlabel* {$b$} at 91 107
\pinlabel* {$c$} at 157 107
\pinlabel* {$d$} at 223 107
\pinlabel* {$e$} at 290 107
\pinlabel* {\rotatebox{180}{\reflectbox{$a$}}} at 23 30
\pinlabel* {\rotatebox{180}{\reflectbox{$b$}}} at 91 30
\pinlabel* {\rotatebox{180}{\reflectbox{$c$}}} at 157 30
\pinlabel* {\rotatebox{180}{\reflectbox{$d$}}} at 223 30
\pinlabel* {\rotatebox{180}{\reflectbox{$e$}}} at 290 30
\endlabellist
\includegraphics[height = 26mm]{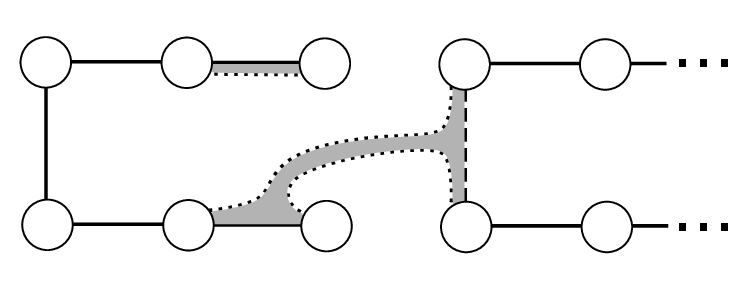}}\quad
\subfigure[$\bat^{1} \mapsto \bat^{2}$]{
\labellist 
\small
\pinlabel* {$a$} at 23 107
\pinlabel* {$b$} at 91 107
\pinlabel* {$c$} at 157 107
\pinlabel* {$d$} at 223 107
\pinlabel* {$e$} at 290 107
\pinlabel* {\rotatebox{180}{\reflectbox{$a$}}} at 23 30
\pinlabel* {\rotatebox{180}{\reflectbox{$b$}}} at 91 30
\pinlabel* {\rotatebox{180}{\reflectbox{$c$}}} at 157 30
\pinlabel* {\rotatebox{180}{\reflectbox{$d$}}} at 223 30
\pinlabel* {\rotatebox{180}{\reflectbox{$e$}}} at 290 30
\endlabellist
\includegraphics[height = 26mm]{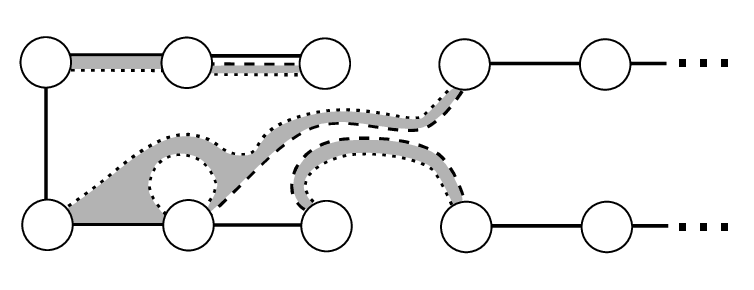}\label{fig:cshs2}}\quad
\subfigure[$\bat^{2} \mapsto \bat^{3} = \ba$]{
\labellist 
\small
\pinlabel* {$a$} at 23 107
\pinlabel* {$b$} at 91 107
\pinlabel* {$c$} at 157 107
\pinlabel* {$d$} at 223 107
\pinlabel* {$e$} at 290 107
\pinlabel* {\rotatebox{180}{\reflectbox{$a$}}} at 23 30
\pinlabel* {\rotatebox{180}{\reflectbox{$b$}}} at 91 30
\pinlabel* {\rotatebox{180}{\reflectbox{$c$}}} at 157 30
\pinlabel* {\rotatebox{180}{\reflectbox{$d$}}} at 223 30
\pinlabel* {\rotatebox{180}{\reflectbox{$e$}}} at 290 30
\endlabellist 
\includegraphics[height = 26mm]{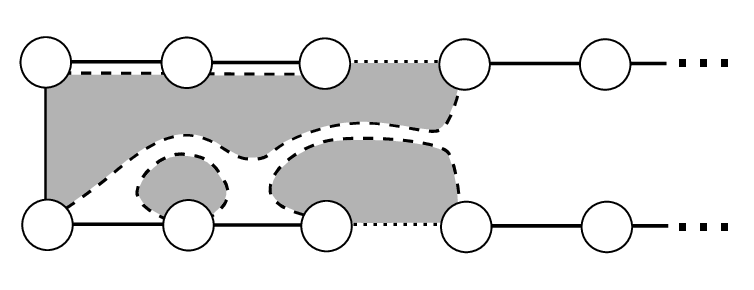}}
\caption[$\alpha$-Handleslides appearing in the proof of Theorem \ref{thm:sum}]{Handleslides connecting the sets $\ba^{\#}$ and $\ba$.  Notice that the first two handleslides are of the form shown in Figure \ref{fig:hsad}.  In each picture, the old curve is dashed, the new curve is dotted, unaffected curves are solid, and the pair of pants is shaded.
\label{fig:cshs}}
\end{figure}

For the first $2m-2$ handleslides, we can construct triangle injections whose 3-gons have components like those in Figures \ref{fig:isotri} and \ref{fig:hstri}; denote by $\tld{g}:\tor{\bat^0} \cap \tor{\bbt^0} \rightarrow \tor{\bat^{m-1}} \cap \tor{\bbt^{m-1}}$ their composition.  As in Section \ref{sec:redtri}, we define triangle injections
$$g_{\alpha}: \tor{\bat^m} \cap \tor{\bbt^{m-1}} \rightarrow \tor{\bat^{m-1}}\cap\tor{\bbt^{m-1}} \quad
\text{and} \quad g_{\beta}: \tor{\bat^{m}} \cap \tor{\bbt^{m}} \rightarrow \tor{\bat^m} \cap \tor{\bbt^{m-1}}$$
using the 3-gons $\psi_{\alpha}^{\pm}$ and $\psi_{\beta}^{\pm}$ for various generators, whose components are found in Figures \ref{fig:CStriA} and \ref{fig:CStriB} (components not shown are like those in Figure \ref{fig:isotri}).

\begin{figure}[h!]
\centering
\subfigure[$\psi_{\alpha}^{+}$]
{
\includegraphics[height = 24mm]{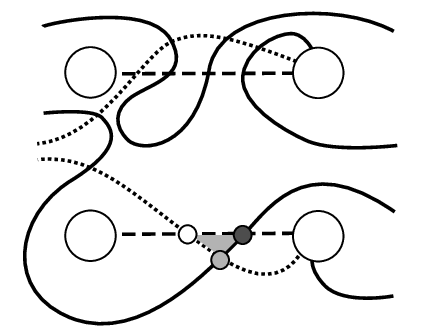}}\quad
\subfigure[$\psi_{\alpha}^{+}$]
{
\labellist 
\small
\pinlabel* {$v_2$} at 115 70
\endlabellist
\includegraphics[height = 24mm]{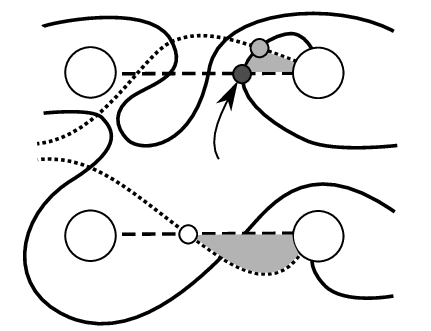}}\quad
\subfigure[$\psi_{\alpha}^{+}$]
{
\includegraphics[height = 24mm]{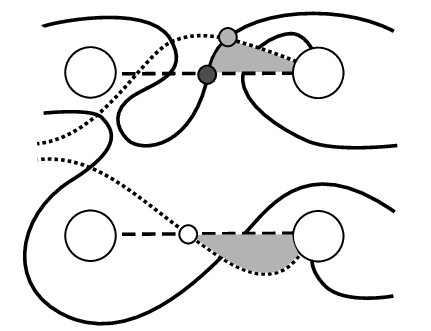}}\quad
\subfigure[$\psi_{\alpha}^{+}$]
{
\includegraphics[height = 24mm]{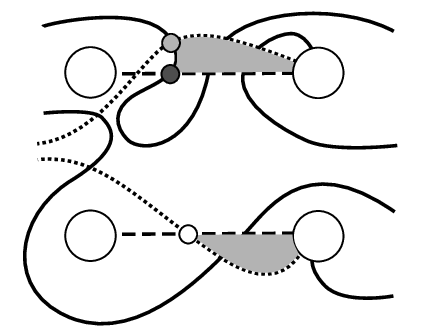}}\\
\subfigure[$\psi_{\alpha}^{-}$]
{
\includegraphics[height = 24mm]{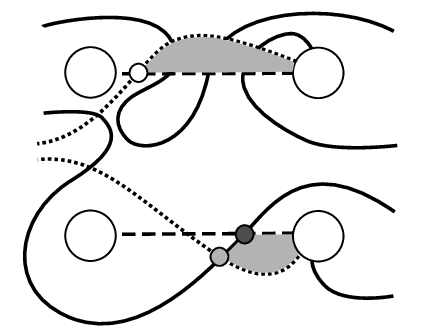}}\quad
\subfigure[$\psi_{\alpha}^{-}$]
{
\labellist 
\small
\pinlabel* {$v_2$} at 115 75
\endlabellist
\includegraphics[height = 24mm]{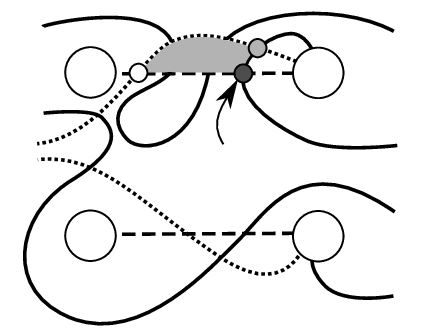}}\quad
\subfigure[$\psi_{\alpha}^{-}$]
{
\includegraphics[height = 24mm]{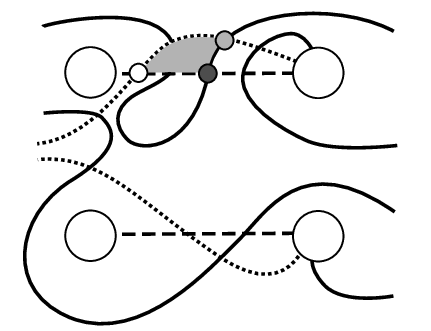}}\quad
\subfigure[$\psi_{\alpha}^{-}$]
{
\includegraphics[height = 24mm]{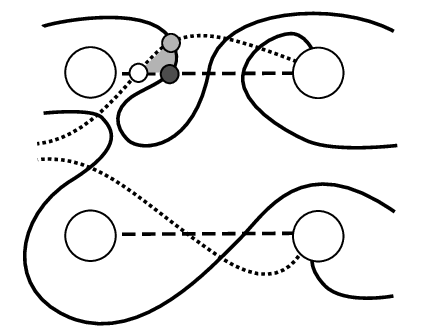}}
\caption[Components of domains of 3-gons associated to $g_{\alpha}$ in the proof of Theorem \ref{thm:sum}]{Some components of domains for the various 3-gons associated to a $\alpha$-triangle injection.  In each figure, the $\bat^m$ are dashed, the $\bat^{m-1}$ are dotted, the $\bbt^{m-1}$ are solid, the dark grey dot is a component of some $\bx$, and the light grey dot is the corresponding component of $g_\alpha(\bx)$. \label{fig:CStriA}}
\end{figure}

\begin{figure}[h!]
\centering
\subfigure[$\psi_{\beta}^{+}$]
{
\includegraphics[height = 24mm]{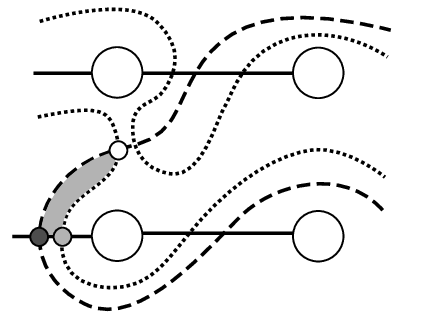}}\quad
\subfigure[$\psi_{\beta}^{+}$]
{
\labellist 
\small
\pinlabel* {$v'_1$} at 140 20
\endlabellist
\includegraphics[height = 24mm]{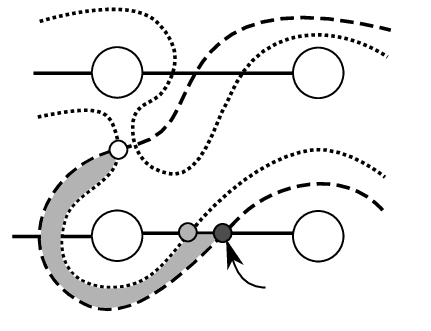}}\quad
\subfigure[$\psi_{\beta}^{+}$]
{
\labellist 
\small
\pinlabel* {$v_1$} at 130 100
\endlabellist
\includegraphics[height = 24mm]{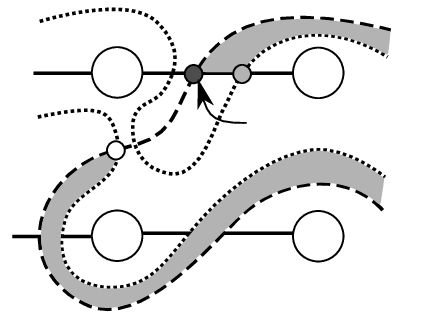}}\\
\subfigure[$\psi_{\beta}^{-}$]
{
\includegraphics[height = 24mm]{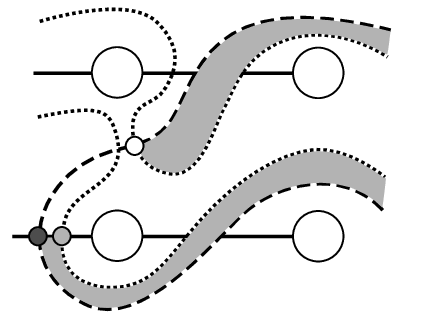}}\quad
\subfigure[$\psi_{\beta}^{-}$]
{
\labellist 
\small
\pinlabel* {$v'_1$} at 140 20
\endlabellist
\includegraphics[height = 24mm]{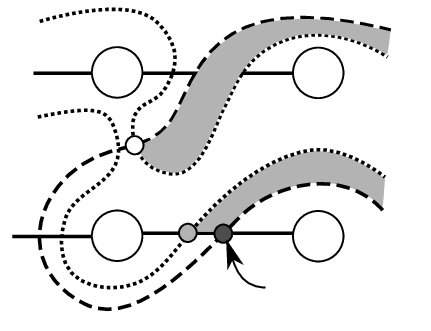}}\quad
\subfigure[$\psi_{\beta}^{-}$]
{
\labellist 
\small
\pinlabel* {$v_1$} at 135 95
\endlabellist
\includegraphics[height = 24mm]{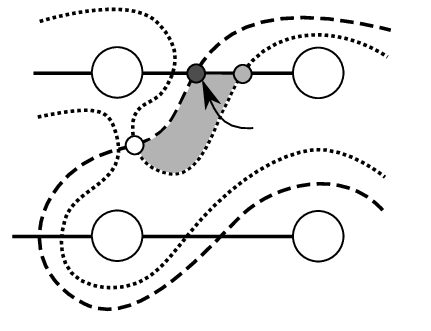}}
\caption[Components of domains of 3-gons associated to $g_{\beta}$ in the proof of Theorem \ref{thm:sum}]{Some components of domains for the various 3-gons associated to a $\beta$-triangle injection.  In each figure, the $\bbt^m$ are dashed, the $\bbt^{m-1}$ are dotted, the $\bat^{m}$ are solid, the dark grey dot is a component of some $\bx$, and the light grey dot is the corresponding component of $g_\beta(\bx)$.\label{fig:CStriB}}
\end{figure}

One can see that $\text{Im}\left( \tld{g} \right) \subseteq \text{Im} \left( g_{\alpha} \circ g_{\beta} \right)$, and so we can define
$$g = \left( g_{\alpha} \circ g_{\beta} \right)^{-1} \circ \tld{g} :\tor{\ba^{\#}}\cap\tor{\bb^{\#}} \rightarrow \tor{\ba}\cap\tor{\bb}.$$

Let  $\bx y$ be a generator in $\h_{1}$ and let $\bw z$ be a generator in $\h_{2}$ (where $\bx$ is a $\left( m-1 \right)$-tuple, $\bw$ is a $\left(n-2\right)$-tuple, $y \in \alpha^{1}_{m}$, and $z \in \alpha^{2}_{1}$).  The generators in $\h_{\#}$ are exactly of the form $\bx \bw yz$, and we set $\red{R}(\bx \bw yz) = \red{R}(\bx y) + \red{R}(\bw z)$.  It isn't hard to see that $g(\bx \bw y z_{i}) = \bx \bw y v_{i}$ for $i=1,2.$

By examining the fork diagrams in Figures \ref{fig:csflat1} and \ref{fig:csflat2}, one can verify that indeed $\tld{\red{R}}(\bx \bw y v_{i}) = \tld{\red{R}}(\bx y) + \tld{\red{R}}(\bw z_{i})$ for $i=1,2$.  Further, notice that $e(b) = e(b'_{1}) + e(b'_{2})$ and $w(D) = w(D_{1}) + w(D_{2})$.  Therefore,
\begin{align*}
s_{\red{R}}(b,D) 
&= \frac{e(b) - w(D) - 2(m + n - 2)}{4} \\
&= \frac{e(b'_{1}) - w(D_{1}) - 2(m-1)}{4} + \frac{e(b'_{2}) - w(D_{2}) - 2(n-1)}{4}\\
&=s_{\red{R}}(b_{1},D_{1}) + s_{\red{R}}(b_{2},D_{2}).
\end{align*}
\end{proof}

\subsubsection{Knot mirrors}\label{sec:mirror}

Given a braid word $b = \sigma_{i_{1}}^{k_{1}} \ldots \sigma_{i_{m}}^{k_{m}} \in \B{2n}$, let $-b$ denote the braid word
\begin{equation*}
-b = \sigma_{2n-i_{1}}^{-k_{1}} \ldots \sigma_{2n-i_{m}}^{-k_{m}} \in \B{2n}.
\end{equation*}
Notice that if the plat closure of $b$ is $K$, then the plat closure of $-b$ is $-K$, the mirror of $K$.

\begin{proof}[Proof of Theorem \ref{thm:mirror}]
Let $b \in \B{2n}$ be a braid whose closure is the knot $K$, and let $\h^{\pm}$ denote the admissible Heegaard diagram for $\DBC{\pm K}$ induced by $\pm b$.  Let $\mathfrak{s} \in \text{Spin}^{c}(\DBC{K})$.  Then by an argument analogous to that in the proof of Lemma 33 in \cite{et:R}, the natural isomorphism
$$ \Phi: \widehat{CF}_{*}\left( \h^{+}, \mathfrak{s} \right) \rightarrow \widehat{CF}^{-*}\left( \h^{-}, \mathfrak{s} \right)$$
is filtered when one equips the complexes with the filtrations $\red{R}$ and $\red{R}^{*}$, respectively.  In addition, if one equips $\widehat{CF}\left( \h^{-}, \mathfrak{s} \right)$ with an absolute grading $g$ on its generators given by $g(\bx^{*}) = -\tld{gr}(\bx)$, then $\Phi$ is graded with respect to $\tld{gr}$ and $g$.  So, $\Phi$ and $\Phi^{-1}$ are filtered with respect to the filtrations $\red{\rho}$ and $\red{\rho}^{*}$.
\end{proof}

\subsection{Some $\mathbb{Q}$-valued knot invariants}\label{sec:r}

The filtration $\red{R}$ gives rise to another more concise knot invariant.  Let $K$ be a knot which is the closure of a braid $b$, let $\h$ be the Heegaard diagram for $\DBC{K}$ induced by $b$, and let $\mathfrak{s} \in \text{Spin}^{c}(\DBC{K})$.  Let $\mathcal{F}(b,r, \mathfrak{s}) \subset \widehat{CF}(\h, \mathfrak{s})$ denote the subcomplex generated by intersection points whose filtration level is at most $r \in \mathbb{Q}$.  For each $\mathfrak{s} \in \text{Spin}^{c}(\DBC{K})$ and each $r \in \mathbb{Q}$, one obtains the homomorphism induced by inclusion:
$$ \iota^{r}_{K,\mathfrak{s}}: H_{*}(\mathcal{F}(b,r,\mathfrak{s}), \widehat{\del}) \rightarrow H_{*}(\widehat{CF}(\h, \mathfrak{s}), \widehat{\del}) = \widehat{HF}(\DBC{K},\mathfrak{s}).$$

Then define functions $r^{min}_{K}$, $r^{max}_{K}$, $r_{K}:\text{Spin}^{c}(\DBC{K}) \rightarrow \mathbb{Q}$ given by
\begin{align*}
r^{min}_{K}(\mathfrak{s}) &= \text{min}\{ r \in \mathbb{Q} |  \iota^{r}_{K,\mathfrak{s}}\neq 0\},\\
r^{max}_{K}(\mathfrak{s}) &= \text{min}\{ r \in \mathbb{Q} |  \iota^{r}_{K,\mathfrak{s}} \text{ is an isomorphism} \},
\quad \text{and}\\
r_{K}(\mathfrak{s}) &= \frac{1}{2}\left( r^{max}_{K}(\mathfrak{s}) + r^{min}_{K}(\mathfrak{s})\right).
\end{align*}

The maps $\iota^{r}_{K,\mathfrak{s}}$ don't depend on the choice of $b$, and so the above functions are indeed knot invariants.

We then set $r(K) = r_{K}(\mathfrak{s}_{0})$, where $\mathfrak{s}_{0} \in \text{Spin}^{c}(\DBC{K})$ is the unique Spin structure.  The reader should compare this definition to those of $\tau$ from \cite{os:4ball}, $s$ from \cite{ras:slice}, and $\delta$ from \cite{cmo:delta}.

In \cite{cmo:delta}, Manolescu and Owens defined a concordance invariant via $\delta(K) = 2 d(\DBC{K}, \mathfrak{s}_{0})$, where $d$ denotes the correction term defined by \OS in \cite{os:abs} and $\mathfrak{s}_{0} \in \text{Spin}^{c}(\DBC{K})$ is the unique Spin structure.  Furthermore, it was shown in \cite{cmo:delta} that whenever $K$ is an alternating knot, $\delta(K) = -\sigma(K)/2.$  Together with Theorem \ref{thm:Rsignthm} above, this implies that if $K$ is a two-bridge knot, then
$$ r(K)= \frac{\sigma(K)}{2} - d(\DBC{K}, \mathfrak{s}_{0}) = \frac{\sigma(K)}{2} - \frac{\delta(K)}{2} = \frac{3 \sigma(K)}{4}.$$

\subsection{Degeneracy}

The unreduced filtration $\rho$ described in \cite{et:R} induces a spectral sequence converging to the group $\widehat{HF}(\DBCs{K})$.  Here we'll state a version of a definition from \cite{et:R}, modified to the case of coefficients in a field $\mathbb{F}$:

\begin{df}\label{def:deg}
Let $\mathbb{F}$ be a field.  A knot $K$ is called \textit{$\rho$-degenerate} if the following hold:
\begin{enumerate}[(i)]
\item The spectral sequence induced by $\rho$ (wth coefficients in $\mathbb{F}$) converges at the $E^{1}$-page.
\item The filtration on $\widehat{HF}(\DBCs{K}; \mathbb{F})$ induced by $\rho$ is constant on each $\text{Spin}^c$-summand $\widehat{HF}(\DBCs{K}, \mathfrak{s}; \mathbb{F})$.
\end{enumerate}
\end{df}

Proposition \ref{prop:RredR} implies that a knot is $\rho$-degenerate if and only if the reduced spectral sequence over $\mathbb{F}$ (induced by $\red{\rho}$) satisfies properties analogous to those listed in Definition \ref{def:deg}.  Since $\red{R} - \red{\rho} = \widetilde{gr}$, Proposition \ref{prop:Rdeg} follows.

Notice that even when a knot is $\rho$-degenerate, the value taken by $r(K)$ may still be of interest.  As suggested in Section \ref{futureconc}, one could ask whether $r$ itself provides a concordance invariant and could compare it other known concordance invariants arising in Heegaard Floer theory and Khovanov homology.  Furthermore, it would be interesting to investigate whether $r = 3\sigma/4$ for all (quasi-)alternating knots.

\subsection{The left-handed trefoil and the lens space $L(3,1)$}\label{sec:example}

Let $K$ be the left-handed trefoil, viewed as the plat closure of the braid $\sigma_{2}^{3} \in \B{4}$.  This braid was studied in Section 3.2 of \cite{et:R}, and the fork diagram shown there is reducible.  We can omit the pair $\alpha_{2},\beta_{2}$ from that fork diagram and obtain via Proposition \ref{prop:DBCred} an admissible Heegaard diagram for $L(3,1)$ of genus 1; this diagram can be seen in Figure \ref{fig:exhdred}.  Let the generators be labelled from left to right as $t', t, \text{ and } x_{1}$
\begin{figure}[h!]
\centering
\begin{minipage}[c]{.45\linewidth}
\labellist 
\small
\pinlabel* {$+\infty$} at 280 250
\pinlabel* {$\ah_{1}$} at 295 190
\pinlabel* {$\bh_{1}$} at 295 30
\pinlabel* {$a$} at 161 132
\pinlabel* {\reflectbox{$a$}} at  309 132
\endlabellist 
\includegraphics[height = 40mm]{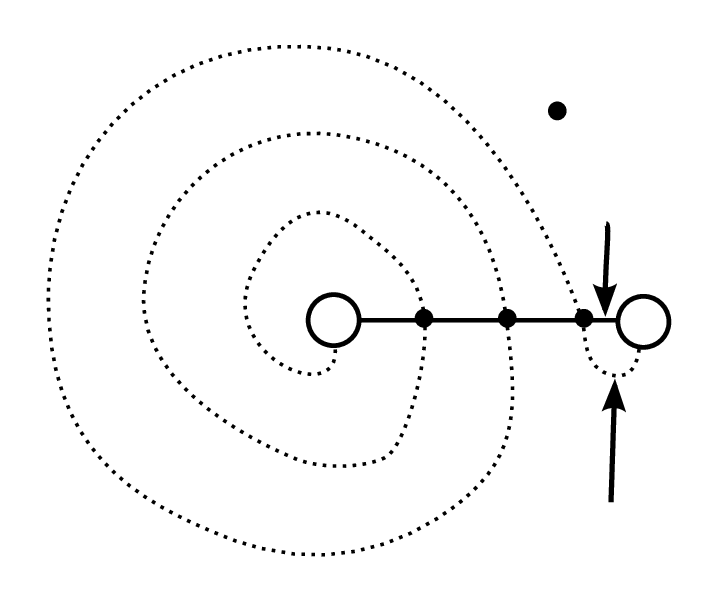}\end{minipage}
\begin{minipage}[c]{.45\linewidth}
\caption{Heegaard diagram  for $L(3,1)$ obtained from $\sigma_{2}^{3} \in \B{4}$}
\label{fig:exhdred}
\end{minipage}
\end{figure}

The set of reduced Bigelow generators is $\red{\G} = \{ t, t', x_{1} \}$, and one can verify that all elements occupy the level $\red{R} = 1.$  In this case $\widehat{\del} \equiv 0$, and indeed we see evidence of $\rho$-degeneracy (which was already found in \cite{et:R}).  Thus $\red{R}$ provides an absolute Maslov grading on the group $\widehat{HF}\left(L(3,1); \Zcaltwo\right)$, and
\begin{equation*}
\widehat{HF}\left(L(3,1); \Zcaltwo\right) = \left[\left( \Zcaltwo \right)^{\oplus 3}\right]_{\red{R}=1}.
\end{equation*}

One should observe that $\widehat{HF}(L(3,1); \mathbb{Z}/2\mathbb{Z})$ is supported entirely in the grading level $\red{R} = 1$ (and 1 is half of the classical signature $\sigma(K)$).  Theorem \ref{thm:Rsignthm} states that all two-bridge knots behave this way.

\subsection{Two-bridge knots}\label{sec2bridge}

Let us first give some background on two-bridge knots and links.

\subsubsection{The Conway form of a two-bridge knot or link}

Recall that a two-bridge knot is a knot which has a projection on which the natural height function has exactly two maxima and two minima.  A two-bridge link is defined similarly, with exactly one maximum and one minimum of the height function lying on each of the two components.

\begin{figure}[h]
\centering
\begin{minipage}[c]{.55\linewidth}
\subfigure[$k$ odd]{
\labellist
\small
\pinlabel* {$b_{2}$} at 5 240
\pinlabel* {$b_{k}$} at 45 110
\pinlabel* {$b_{1}$} at 45 340
\endlabellist
\includegraphics[height = 50mm]{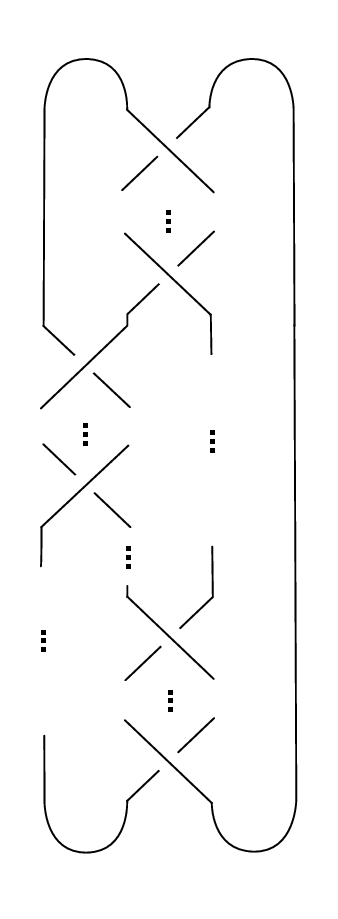}\label{conwayodd}}\quad
\subfigure[$k$ even]{
\labellist
\small
\pinlabel* {$b_{2}$} at 5 265
\pinlabel* {$b_{k}$} at 5 130
\pinlabel* {$b_{1}$} at 45 365
\pinlabel* {$b_{2}$} at 220 265
\pinlabel* {$b_{k}\text{-}1$} at 210 130
\pinlabel* {$b_{1}$} at 260 365
\endlabellist
\includegraphics[height = 50mm]{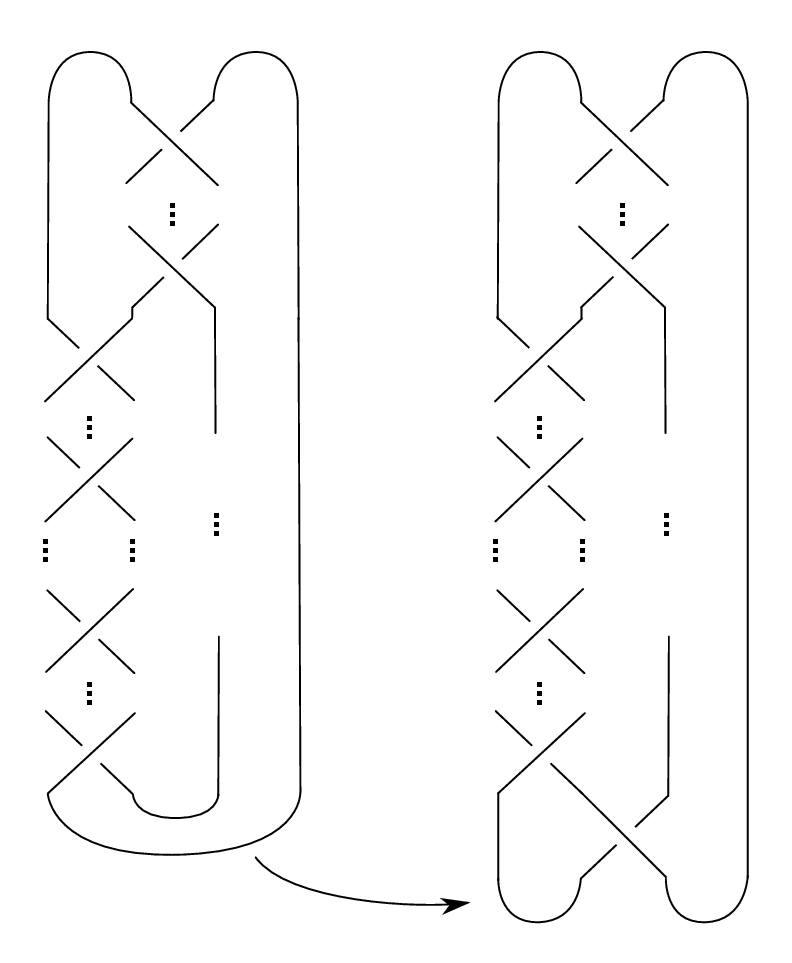}\label{conwayeven}}
\end{minipage}
\begin{minipage}[c]{.40\linewidth}
\caption[Conway forms for knots or links]{Conway forms for knots or links with Conway notation $[b_{1}, \ldots, b_{k}]$ with all $b_j > 0$.  The label ``$b_{j}$'' indicates a bundle of $b_{j}$ half-twists of the indicated direction.}
\label{fig:conway}
\end{minipage}
\end{figure}

For each two-bridge knot or link $L$, there are nonzero integers $b_{1}, b_{2}, \ldots ,b_{k}$ all of the same sign such that one of the two diagrams in Figure \ref{fig:conway} is a projection of either $L$ or its mirror image (the mirror image must be taken when the numbers $b_j$ are negative).  After possibly performing the isotopy shown in Figure \ref{conwayeven}, we can assume that $k$ is odd.

This diagram is referred to as the Conway form, with Conway notation given by the continued fraction
\begin{equation*}
[b_{1}, \ldots, b_{k}] = b_{1} + \cfrac{1}{b_{2} + \cfrac{1}{b_{3} + \cfrac{1}{b_{4} + \ldots}}} = \frac{p}{q}.
\end{equation*}
\begin{rmk}
If $L$ is a knot or link with Conway notation $[b_{1}, \ldots, b_{k}]  = \frac{p}{q}$, $p$ and $q$ coprime, 
then the two-fold cover of $S^{3}$ branched along $L$ is the lens space $L(p,q)$.
\end{rmk}

For more about the Conway form and Conway notation, see \cite{rolf:knots}.

\subsubsection{The Goeritz matrix and the classical knot signature}

Gordon and Litherland give a formula in \cite{gl:sig} for calculating the signature $\sigma(K)$ of a knot $K$.  Here we review their construction briefly, but one can  find more details in \cite{gl:sig}.

Given a regular projection $D$ of a knot $K$ in $\mathbb{R}^2$, color  the components of $\mathbb{R}^2$---$D$ black and white in a checkerboard fashion, denoting the white regions by $X_{0}, \ldots, X_{n}$.  Denote by $c(X_{i},X_{j})$ the set of crossings of $D$ which are incident to $X_{i}$ and $X_{j}$.  Then assign an incidence number $\eta(C) = \pm 1$ to each crossing $C$ in the projection, following the convention in Figure \ref{fig:crossingeta}.

\begin{figure}[h]
\centering
\begin{minipage}[c]{.45\linewidth}
\labellist
\small
\pinlabel* {$\eta(C) = +1$} at 80 10
\pinlabel* {$\eta(C) = -1$} at 320 10
\endlabellist
\includegraphics[height = 25mm]{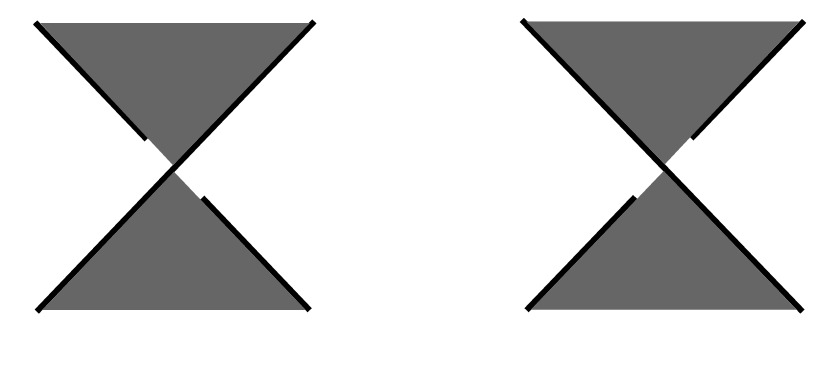}\end{minipage}
\begin{minipage}[c]{.45\linewidth}
\caption[Conventions for $\eta$ on crossings in colored diagrams]{Conventions for $\eta(C)$, where $C$ is a crossing in a colored diagram $D$}
\label{fig:crossingeta}
\end{minipage}
\end{figure}

\begin{df}\label{goeritzdef}
Let $D$ be a regular projection of a knot $K$ equipped with a checkerboard coloring and white regions $X_{0}, \ldots, X_{n}$.  Then let $G'(D)$ be the $(n+1) \times (n+1)$ matrix with
\begin{equation*}
G'(D) = \left(g_{ij}\right)_{i,j = 0}^{n}, \quad \text{where} \quad
g_{ij} = 
\begin{cases} 
-\displaystyle\sum_{c(X_{i},X_{j})}\eta(C)& \text{if $i \neq j$}\\ 
-\displaystyle\sum_{k \neq i}g_{ik} & \text{if $i = j$} .
\end{cases}
\end{equation*}
Then define $G(D)$, the \textit{Goeritz matrix of D}, to be the $n \times n$ symmetric integer matrix obtained from $G'(D)$ by deleting the $0^{th}$ row and $0^{th}$ column.
\end{df}

\begin{figure}[h]
\centering
\begin{minipage}[c]{.45\linewidth}
\labellist
\small
\pinlabel* {Type I} at 77 10
\pinlabel* {Type II} at 320 10
\endlabellist
\includegraphics[height = 25mm]{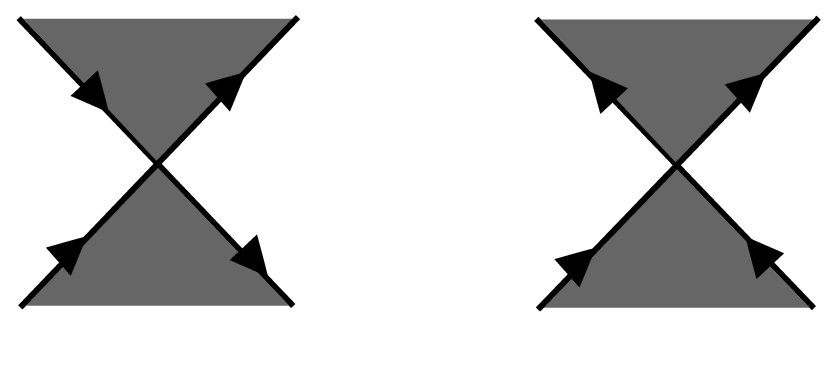}\end{minipage}
\begin{minipage}[c]{.45\linewidth}
\caption{Types of double point orientations counted by $\mu_{I}$ and $\mu_{II}$\label{fig:crossingorient}}
\end{minipage}
\end{figure}

Now fixing an orientation on $K$, we separate the double points of $D$ into types I and II, according to Figure \ref{fig:crossingorient}.  Note that when classifying a crossing in this way, we ignore which strand is passing over the other.

Now we define two integers $\mu_{I}$ and $\mu_{II}$ by
\begin{equation*}
\mu_{I}(D) = \displaystyle\sum_{C \text{ of type I}} \eta(C) \quad \text{and} \quad \mu_{II}(D) = \displaystyle\sum_{C \text{ of type II}} \eta(C).
\end{equation*}

It is shown in \cite{gl:sig} that the quantity $sign(G(D)) - \mu_{II}(D)$ is independent of the projection $D$, and thus an invariant of the knot $K$.  We'll make use of the following theorem:
\begin{thm}[Theorem $6$ from \cite{gl:sig}]\label{GLthm}
Let $K$ be a knot with regular projection $D$.  Then
\begin{equation*}
\sigma(K) = sign(G(D)) - \mu_{II}(D).
\end{equation*}
\end{thm}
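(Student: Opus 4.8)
The plan is to realize $\text{sign}(G(D))$ as the signature of a $4$-manifold bounding the double branched cover $\DBC{K}$, and then to identify the discrepancy between that signature and $\sigma(K)$ with the normal Euler number of the checkerboard surface, which in turn equals $2\mu_{II}(D)$.

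First I would interpret $G(D)$ geometrically. The white regions $X_{0},\ldots,X_{n}$ together with the crossings assemble into a (possibly non-orientable) spanning surface $F$ for $K$: place a disk over each $X_{i}$ and a half-twisted band at each crossing, following the checkerboard pattern. A basis for $H_{1}(F;\mathbb{Z})$ is given by loops $e_{1},\ldots,e_{n}$, where $e_{i}$ runs once through the disk over $X_{i}$, and I would verify directly that the \textbf{Gordon--Litherland pairing}
\[
\mathcal{G}_{F}(a,b)=\text{lk}\bigl(a,\,b^{+}+b^{-}\bigr)
\]
(the linking number of $a$ with the sum of the two normal pushoffs of $b$, which is well defined even when $F$ is non-orientable) is represented in the basis $\{e_{i}\}$ precisely by $G(D)$. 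The off-diagonal entry $g_{ij}$ records $-\sum_{c(X_{i},X_{j})}\eta(C)$ because each band joining $X_{i}$ to $X_{j}$ contributes its incidence number to the linking, while the diagonal convention $g_{ii}=-\sum_{k\neq i}g_{ik}$ is exactly the self-linking forced by the two-sided pushoff. Thus $\text{sign}(G(D))=\text{sign}(\mathcal{G}_{F})$.

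Next I would push the interior of $F$ into $B^{4}$ to obtain a properly embedded surface $\tilde{F}$ with $\partial\tilde{F}=K\subset S^{3}=\partial B^{4}$, and form the double cover $W$ of $B^{4}$ branched along $\tilde{F}$. One has $\partial W=\DBC{K}$, and a standard computation (via Kirby calculus, or the isomorphism $H_{2}(W;\mathbb{Q})\cong H_{1}(F;\mathbb{Q})$) shows that the intersection form of $W$ is isometric to $\mathcal{G}_{F}$, whence $\text{sign}(W)=\text{sign}(G(D))$. The crux is then to evaluate $\text{sign}(W)$ a second way. Let $\tau$ be the covering involution, with fixed surface $\tilde{F}$; applying the $G$-signature theorem to $\tau$ expresses $\text{sign}(W)$ as the contribution one would obtain from an orientable Seifert surface (namely $\sigma(K)$, for which the normal Euler number vanishes) plus a defect term equal to one half the normal Euler number $e(\tilde{F})$ of the branch surface:
\[
\text{sign}(W)=\sigma(K)+\tfrac{1}{2}e(\tilde{F}).
\]
Equivalently, one may avoid the $G$-signature theorem by cobounding $\tilde{F}$ with a pushed-in Seifert surface inside $B^{4}$, so that the two branched covers differ by a cobordism whose signature is computed by Novikov additivity, the orientable case contributing exactly $\sigma(K)$.

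Finally I would compute the Euler number combinatorially. The checkerboard framing of $F$ fails to extend across each crossing by a local amount $\pm1$ determined by the crossing's type and its incidence number; summing these local contributions with the correct signs gives $\tfrac{1}{2}e(\tilde{F})=\mu_{II}(D)$, the type-II crossings being exactly those at which the framing defect is seen by the chosen orientation of $K$. Combining the two evaluations of $\text{sign}(W)$ yields $\text{sign}(G(D))=\sigma(K)+\mu_{II}(D)$, which rearranges to the asserted formula. The main obstacle is the middle step: rigorously establishing $\text{sign}(W)=\sigma(K)+\tfrac{1}{2}e(\tilde{F})$. This is where the non-orientability of the checkerboard surface genuinely enters, and controlling it demands either a careful application of the $G$-signature theorem or an equally careful cobordism-plus-Novikov-additivity argument; the bookkeeping of orientation conventions (so that the correction lands on $\mu_{II}$ rather than $\mu_{I}$, and with the correct overall sign) is the delicate part, whereas the first and last steps are purely local once $F$ and $\mathcal{G}_{F}$ are set up.
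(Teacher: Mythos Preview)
The paper does not prove this theorem at all: it is stated as Theorem~6 from \cite{gl:sig} (Gordon--Litherland) and simply quoted for later use, with no proof or sketch given. So there is no ``paper's own proof'' to compare against.

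That said, your proposal is essentially the original Gordon--Litherland argument. The identification of $G(D)$ with the form $\mathcal{G}_{F}$ on the checkerboard surface, the passage to the branched double cover $W$ of $B^{4}$ with $\text{sign}(W)=\text{sign}(\mathcal{G}_{F})$, and the correction term $\tfrac{1}{2}e(\tilde F)=\mu_{II}(D)$ coming from the normal Euler number are exactly the ingredients in \cite{gl:sig}. Your assessment of where the difficulty lies---the non-orientable signature formula $\text{sign}(W)=\sigma(K)+\tfrac{1}{2}e(\tilde F)$ and the sign bookkeeping that pins the correction on the type~II crossings---is also accurate; Gordon and Litherland handle this by a direct argument rather than invoking the full $G$-signature theorem, but either route works. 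As a reconstruction of the cited proof, your outline is sound.
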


We'll need the following fact, the proof of which will be left as an exercise.
\begin{lem}\label{writhelem}
Let $K$ be a knot with oriented regular projection $D$.  Then
\begin{equation*}
w(D) = \mu_{II}(D) - \mu_{I}(D).
\end{equation*}
\end{lem}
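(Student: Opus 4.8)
The plan is to establish the identity one crossing at a time. Recall that the writhe is $w(D)=\sum_{C}\varepsilon(C)$, the sum being over all crossings $C$ of $D$, where $\varepsilon(C)\in\{+1,-1\}$ is the usual sign of $C$ determined by the orientations of the two strands. It therefore suffices to prove the local equality
\begin{equation*}
\varepsilon(C)=\begin{cases}-\eta(C)&\text{if }C\text{ is of type I},\\ \eta(C)&\text{if }C\text{ is of type II},\end{cases}
\end{equation*}
for each crossing $C$, since summing over all crossings then gives $w(D)=\sum_{C\text{ type II}}\eta(C)-\sum_{C\text{ type I}}\eta(C)=\mu_{II}(D)-\mu_{I}(D)$ straight from the definitions of $\mu_{I}$ and $\mu_{II}$.

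To prove the local equality I would first record the symmetries of the three pieces of data at a crossing. Switching which strand is the overstrand reverses $\varepsilon(C)$, reverses $\eta(C)$ by the convention of Figure \ref{fig:crossingeta}, and leaves the type I/II classification unchanged (it ignores over/under information), so it preserves the displayed equality. Reversing the orientation of a single strand reverses $\varepsilon(C)$, leaves $\eta(C)$ unchanged (since $\eta$ is defined without reference to orientations), and interchanges types I and II, so it too preserves the displayed equality. These two operations connect all of the local configurations, so the displayed equality need only be checked in one representative local picture.

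Then I would draw that single picture --- say a crossing with a chosen overstrand, a chosen checkerboard coloring, and the orientation pattern that Figure \ref{fig:crossingorient} designates as type II --- read off $\varepsilon(C)$ from the orientations and the chosen overstrand, read off $\eta(C)$ from Figure \ref{fig:crossingeta}, and confirm that $\varepsilon(C)=\eta(C)$. Propagating through the symmetries above then delivers the local equality at every crossing, and hence the lemma.

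The only real obstacle is bookkeeping with sign conventions: the content of the statement is precisely the compatibility of the crossing sign with the Gordon--Litherland incidence number $\eta$ and the type I/II dichotomy, so one must be scrupulous that the checkerboard coloring used to define $\eta$ in Figure \ref{fig:crossingeta} is the same one implicitly fixing the types in Figure \ref{fig:crossingorient}, and that the writhe convention is the standard one. Since there is no substantive difficulty beyond matching these conventions, it is reasonable to leave the verification to the reader.
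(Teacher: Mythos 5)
Your argument is correct, and in fact the paper offers no proof at all here --- it explicitly leaves Lemma \ref{writhelem} as an exercise --- so there is nothing to compare against. Reducing $w(D)=\mu_{II}(D)-\mu_{I}(D)$ to the crossing-by-crossing identity $\varepsilon(C)=\eta(C)$ for type II and $\varepsilon(C)=-\eta(C)$ for type I is exactly the right move, and your two symmetries (switching the overstrand: flips $\varepsilon$ and $\eta$, fixes the type; reversing one strand: flips $\varepsilon$, fixes $\eta$, swaps the types) do act transitively on the local configurations once one normalizes the checkerboard coloring by a rotation, so a single representative picture suffices. The only substance you defer is that one picture, and that is precisely where the sign conventions of Figures \ref{fig:crossingeta} and \ref{fig:crossingorient} enter (getting it backwards would yield $w=\mu_{I}-\mu_{II}$ instead); you flag this appropriately, so the write-up is complete at the same level of rigor as the paper itself.
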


\subsubsection{Computations for two-bridge knots}

We'll perform our calculation for a two-bridge knot $K$ using the braid whose closure is the Conway form of $K$.  Further, we assume that the number of Bigelow generators cannot be reduced by an isotopy of the fork diagram.  Because Conway forms never involve the rightmost strand in the braiding, the induced fork diagram will always be reducible.  After reducing the diagram, $\red{\G}$ will be the set of $1$-tuples $\alpha_{1} \cap bE'_{1}$.  First we show that the function $\red{R}$ is very simple for such a reduced fork diagram:

\begin{prop}\label{prop:Rtildeprop}
Let $K$ be a two-bridge knot with Conway notation $[b_{1}, \ldots, b_{k}]$.  Then for any reduced Bigelow generator $\bx \in \red{\G}$ in the special reduced fork diagram above,

\begin{center}
$\red{R}(\bx) =
 \begin{cases} 
\frac{e - w - 2}{4}& \text{if $b_{1} > 0$}\\ 
\frac{e - w + 2}{4}& \text{if $b_{1} < 0$}.
\end{cases}$
\end{center}
where $e$ is the signed count of braid generators in the Conway form and $w$ is the writhe of that diagram.
\end{prop}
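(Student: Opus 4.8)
The plan is first to strip the statement down to a single assertion about $\red{\tld{R}}$. Since $K$ is two-bridge, its Conway form is the plat closure of a braid $b\in B_{4}$, so here $n=2$: the reduced fork diagram has $\alpha_{1}$ a straight arc joining $\mu_{1}$ and $\mu_{2}$, and (by the non-reducibility assumption on the diagram, together with $\DBC{K}=L(p,q)$) the set $\red{\G}$ of single intersection points $\alpha_{1}\cap bE'_{1}$ has exactly $\det(K)=p$ elements. With $n=2$ the shift is the global constant $s_{\red{R}}(b,D)=\tfrac{e-w-2}{4}$, so the proposition is equivalent to the claim that
\[
\red{\tld{R}}(\bx)=\red{T}(\bx)-\red{Q}(\bx)+\red{P}(\bx)
\]
equals $0$ for every $\bx\in\red{\G}$ when $b_{1}>0$, and equals $1$ for every $\bx$ when $b_{1}<0$. (In a Conway form all $b_{i}$ share a sign, so "$b_{1}>0$" just means the Conway diagram is all-positive.) I would also record here that it suffices to treat $b_{1}>0$: the all-negative form $[-b_{1},\dots,-b_{k}]$ represents $-K$, the identification $\bx\leftrightarrow\bx^{*}$ used in the proof of Theorem~\ref{thm:mirror} carries $\red{R}$ to $-\red{R}$, and $s_{\red{R}}(-b,-D)=-s_{\red{R}}(b,D)-1$, so the $b_{1}<0$ case is the mirror of the $b_{1}>0$ case.

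The heart of the argument is an explicit analysis of the reduced fork diagram for the Conway-form braid. The braid word for $[b_{1},\dots,b_{k}]$ is a product of nested bundles of half-twists, none involving the rightmost strand, so $bE'_{1}$ is a single embedded arc obtained by pushing $E'_{1}$ through these twist regions, and its intersections with the straight arc $\alpha_{1}$ are the generators. Because every twist region has the same sign, the local picture of $bE'_{1}$ relative to $\alpha_{1}$ near each puncture is one fixed standard model. I would then compute $\red{Q},\red{T},\red{P}$ directly from their definitions (the reduced analogues of the formulas in Section~4 of \cite{et:R} and in \cite{cm:R}). I expect $\red{P}(\bx)$ to depend only on the local configuration near $\mu_{1}$ and $\mu_{2}$—hence only on the common sign of the $b_{i}$, and not on $\bx$—while $\red{Q}$ and $\red{T}$ individually vary over the $p$ generators (this variation is forced: it reflects the oscillation of $\tld{gr}$ over the $p$ distinct $\text{Spin}^{c}$ structures of $L(p,q)$). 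The point to extract is that $\red{T}-\red{Q}$ is nevertheless constant on $\red{\G}$: passing from one point of $\alpha_{1}\cap bE'_{1}$ to an adjacent one, along $\alpha_{1}$ or along $bE'_{1}$, changes $\red{T}$ and $\red{Q}$ by the same amount precisely because the sign hypothesis forces one to meet only one of the two possible local crossing models. Combining these, $\red{\tld{R}}=\red{T}-\red{Q}+\red{P}$ is constant on $\red{\G}$, and evaluating it on the simplest case $[b_{1}]$ (a $(2,b_{1})$ torus knot or link, $\DBC{K}=L(b_{1},1)$) pins the constant at $0$.

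The main obstacle is organizational: carrying the combinatorial definitions of $\red{Q},\red{T},\red{P}$ through a Conway form with an arbitrary number of twist regions and verifying generator-independence of $\red{T}-\red{Q}$. The cleanest way to discharge this is probably an induction on $k$ (equivalently on the word length $e$): inserting one more positive half-twist into the braid permutes $\red{\G}$ and changes $\red{T},\red{Q},\red{P}$ only through cancelling contributions—with a uniform shift of everything by one common constant when the inserted twist lies in the first region—so the base case $[b_{1}]$ propagates. Alternatively one can encode the same data as a continued-fraction recursion. I do not anticipate any conceptual difficulty once the two local crossing models (and the local models near $\mu_{1},\mu_{2}$) are fixed; the whole content is that the sign constraint inherent in a Conway form rules out the "bad" crossing model, which is exactly what makes $\red{\tld{R}}$ globally constant.
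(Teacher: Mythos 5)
Your overall strategy---reduce the statement to the claim that $\red{\tld{R}} = \red{T}-\red{Q}+\red{P}$ is identically $0$ (resp.\ $1$) on $\red{\G}$ when $b_{1}>0$ (resp.\ $b_{1}<0$), and prove this by induction on the Conway-form braid word one half-twist at a time, anchored at the base case $\sigma_{2}^{b_{1}}$---is exactly the paper's. Your mirror-symmetry reduction of the $b_{1}<0$ case to the $b_{1}>0$ case is a small genuine shortcut (and the arithmetic with $e\mapsto -e$, $w\mapsto -w$ does check out); the paper instead runs a second, analogous induction on the generators $\sigma_{1}$ and $\sigma_{2}^{-1}$.

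The one substantive problem is in your inductive step. Inserting a half-twist does not ``permute $\red{\G}$'': it increases $|\red{\G}|=\det(K)$, so genuinely new generators appear, and their gradings cannot be obtained from the old ones by cancelling contributions among a fixed set of points. The paper closes the induction by augmenting the fork diagram with an auxiliary horizontal tine edge $\alpha$ joining $\mu_{2}$ and $\mu_{3}$ and tracking the ``central intersections'' $x\in(\alpha-\mu_{2})\cap\beta_{1}$ alongside the honest generators on $\alpha_{1}$. Applying $\sigma_{1}^{-1}$ spawns from each interior generator $g$ a new central intersection $c$ with $\red{\tld{R}}(c)=\red{\tld{R}}(g)+1$ (and leaves the level of the surviving generator unchanged); applying $\sigma_{2}$ spawns from each central intersection $c$ a new interior generator $g$ with $\red{\tld{R}}(g)=\red{\tld{R}}(c)-1$ (and leaves the level of the surviving central intersection unchanged). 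The $+1$ and $-1$ cancel, so every honest generator ever produced sits at level $0$. Some such auxiliary bookkeeping device is unavoidable: the new elements of $\alpha_{1}\cap bE'_{1}$ created by a $\sigma_{2}$ twist inherit their gradings from points that were not previously in $\red{\G}$ at all, so an induction carried on $\red{\G}$ alone, as you propose, cannot be made to close.
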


Before proving Proposition \ref{prop:Rtildeprop}, we'll have to define some new terminology.

\begin{df}
Let $K$ be an oriented two-bridge knot, and consider the special fork diagram acquired from the Conway form of $K$.  Augment the diagram by adding an extra horizontal tine edge $\alpha$ connecting $\mu_{2}$ and $\mu_{3}$, and give it a vertical handle $h$.  Then we'll call $x \in \left(\alpha - \mu_{2} \right) \cap \beta_{1}$ a \textit{central intersection}.
\end{df}

We then extend the $\red{\tld{R}}$ grading to central intersections in the natural way.  Of course our reduced fork diagrams don't include $\alpha$ or its handle; we will simply use these central intersections as an inductive tool for proving Proposition \ref{prop:Rtildeprop}.

\begin{proof}[Proof of Proposition \ref{prop:Rtildeprop}]

We first prove the proposition for the case where $b_{i} > 0$ for $i = 1, \ldots, k$.  The fork diagram corresponding to only $\sigma_{2}^{b_{1}}$ includes $b_{1}$ Bigelow generators with $\red{\tld{R}} = 0$.  Notice that in such a diagram, when $\beta_{1}$ is incident to $\mu_{i}$, it approaches from below when $i=1$ or $i=3$ and from above when $i=2$.  Furthermore, the actions of $\sigma_{2}$ and $\sigma_{1}^{-1}$ don't affect the directions from which $\beta_1$ approaches these punctures.  Let us develop inductive steps for applying $\sigma_{1}^{-1}$ and $\sigma_{2}$, the building blocks for diagrams of this type.

For the first inductive step, we examine the application of $\sigma_{1}^{-1}$ to an existing braid $b$.  Each existing element $g \in \Gtr$ on the interior of $\alpha_{1}$ spawns one new central intersection $c$ and is itself replaced by another interior $g' \in \Gtr$, with $\red{\tld{R}}(g') = \red{\tld{R}}(g)$ and $\red{\tld{R}}(c) = \red{\tld{R}}(g) + 1$.

We should also examine the effects on an arc terminating at $\mu_{1}$ (case I) or $\mu_{2}$ (case II), as in Figure \ref{fig:sigma1cases}.

\begin{figure}[h]
\centering
\subfigure[Case I]{
\includegraphics[height = 20mm]{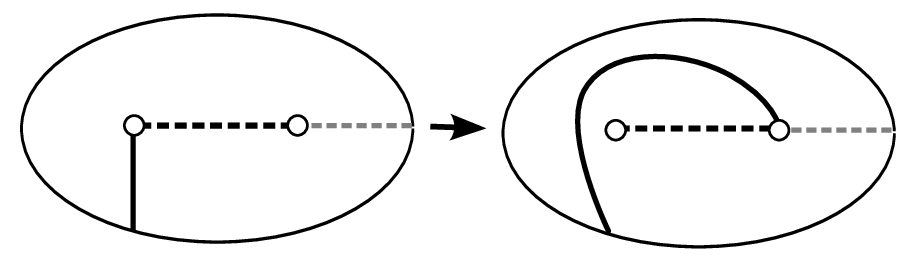}}
\subfigure[Case II]{
\includegraphics[height = 20mm]{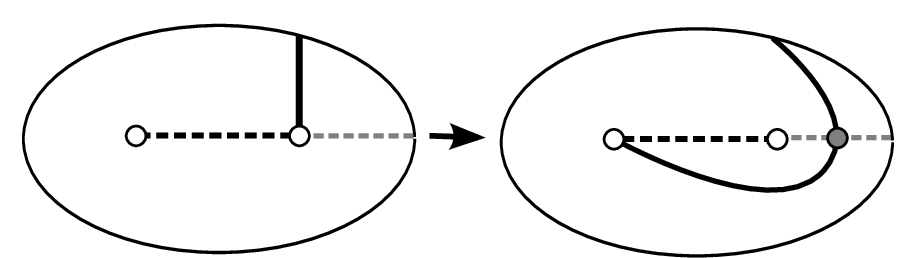}}
\caption[The endpoint cases in the $\sigma_{1}^{-1}$ inductive step in the proof of Proposition \ref{prop:Rtildeprop}]{The endpoint cases in the $\sigma_{1}^{-1}$ inductive step.  The arcs $\alpha_{1}$ and $\alpha$ are dotted black and dotted gray, respectively.}
\label{fig:sigma1cases}
\end{figure}
In both cases, the element $g \in \Gtr$ is replaced by $g' \in \Gtr$.  We see that $\red{\tld{R}}(g') = \red{\tld{R}}(g)$.

In case II, the application of $\sigma_{1}^{-1}$ also spawns a new interior central intersection point, $c$.  We see that $\red{\tld{R}}(c) = \red{\tld{R}}(g) + 1.$

The loops used for grading calculations above can be seen in Figure \ref{fig:sigma1loops}.

\begin{figure}[h]
\centering
\subfigure[Case I]{
\labellist 
\small
\pinlabel* {$g$} at 100 20
\pinlabel* {$g'$} at 300 20
\endlabellist
\includegraphics[height = 20mm]{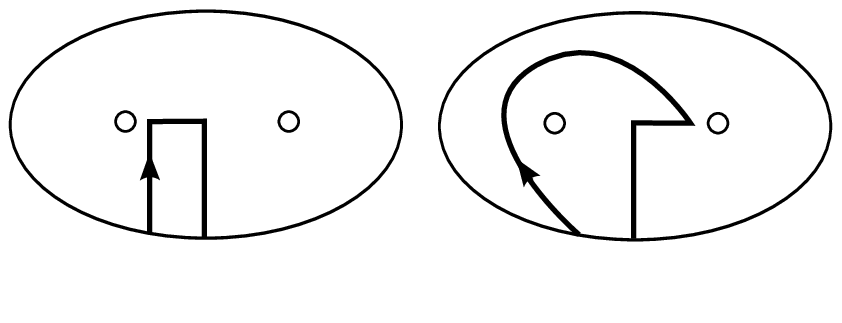}}\quad
\subfigure[Case II]{
\labellist 
\small
\pinlabel* {$g$} at 100 20
\pinlabel* {$g'$} at 300 20
\pinlabel* {$c$} at 500 20
\endlabellist
\includegraphics[height = 20mm]{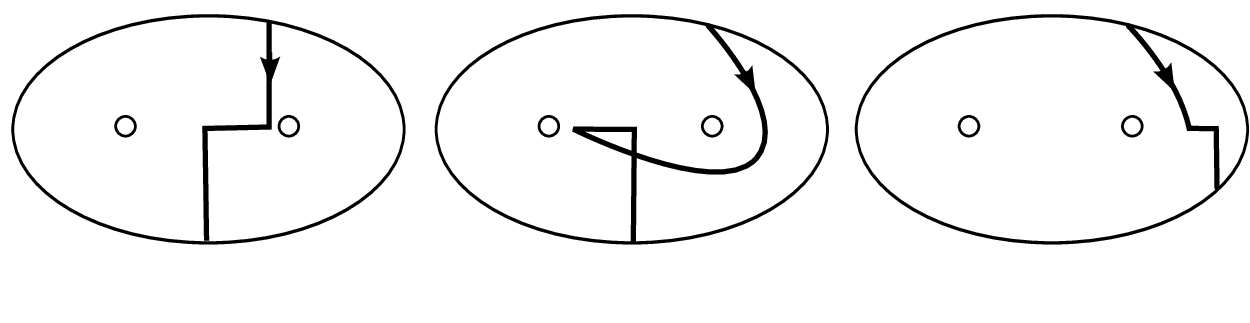}}
\caption[Grading loops associated to the $\sigma_{1}^{-1}$ inductive step in the proof of Proposition \ref{prop:Rtildeprop}]{Grading loops associated to the $\sigma_{1}^{-1}$ inductive step}
\label{fig:sigma1loops}
\end{figure}

For the second inductive step, we examine the application of $\sigma_{2}$ to an existing braid $b$.  Each existing central intersection $c$ spawns a new element $g \in \Gtr$ on the interior of $\alpha_{1}$, and is itself replaced by a new central intersection $c'$.  We see that $\red{\tld{R}}(c') = \red{\tld{R}}(c)$ and $\red{\tld{R}}(g) = \red{\tld{R}}(c) - 1$.

Some new elements of $\red{\Gtil}$ can also result from twisting a strand that originally terminated at $\mu_{2}$ (case I) or $\mu_{3}$ (case II), as shown in Figure \ref{fig:sigma2cases}.

\begin{figure}[h]
\centering
\subfigure[Case I]{
\includegraphics[height = 20mm]{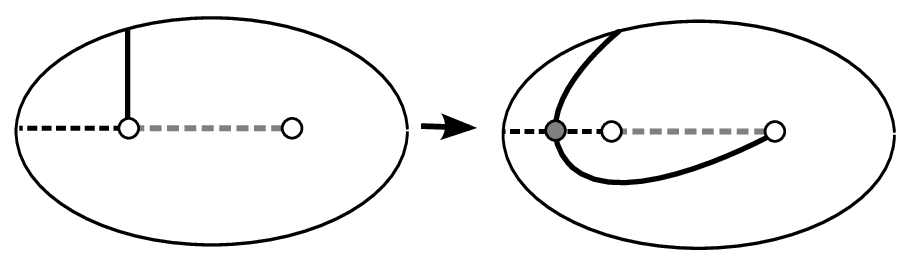}}
\subfigure[Case II]{
\includegraphics[height = 20mm]{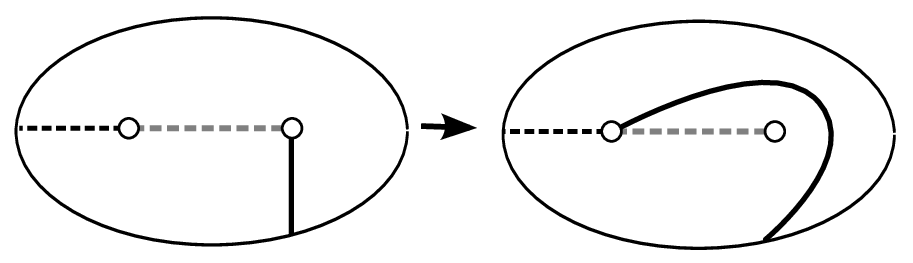}}
\caption[The endpoint cases in the $\sigma_{2}$ inductive step in the proof of Proposition \ref{prop:Rtildeprop}]{The endpoint cases in the $\sigma_{2}$ inductive step}
\label{fig:sigma2cases}
\end{figure}

In case I, we gain a central intersection $c$, and the element $g \in \Gtr$ is replaced by $h \in \Gtr$.  In case II, the central intersection $c$ is replaced by $g \in \Gtr$.  In either case, we have that $\red{\tld{R}}(c) = \red{\tld{R}}(g)+1$ and $\red{\tld{R}}(h) = \red{\tld{R}}(g)$.

The grading comparisons calculated above are demonstrated by the loops in Figure \ref{fig:sigma2loops}.

\begin{figure}[h]
\centering
\subfigure[Case I]{
\labellist 
\small
\pinlabel* {$g$} at 100 20
\pinlabel* {$c$} at 300 20
\pinlabel* {$h$} at 500 20
\endlabellist
\includegraphics[height = 20mm]{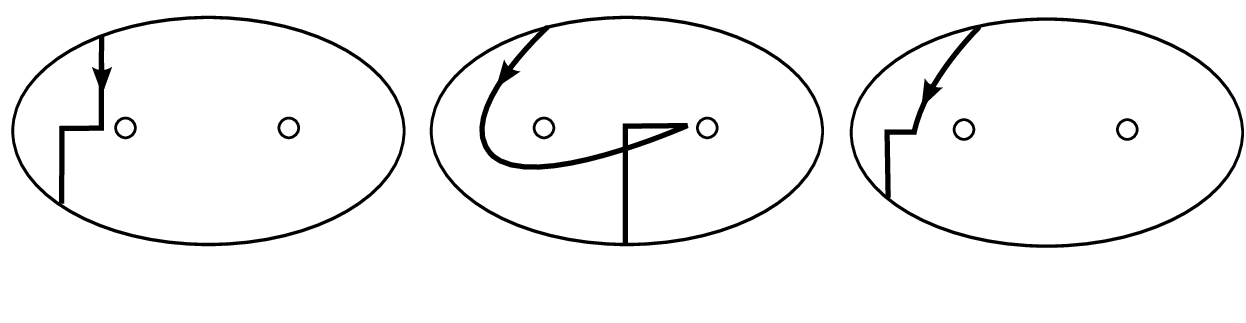}}\quad
\subfigure[Case II]{
\labellist 
\small
\pinlabel* {$c$} at 100 20
\pinlabel* {$g$} at 300 20
\endlabellist
\includegraphics[height = 20mm]{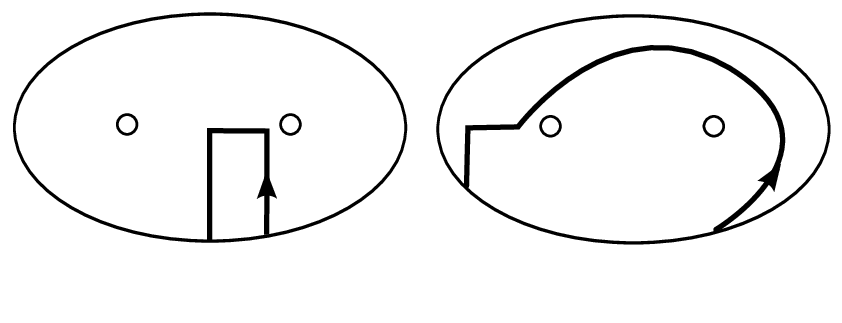}}
\caption[Grading loops associated to the $\sigma_{2}$ inductive step in the proof of Proposition \ref{prop:Rtildeprop}]{Grading loops associated to the $\sigma_{2}$ inductive step}
\label{fig:sigma2loops}
\end{figure}
Using the results of these two inductive steps, we see that all elements of $\red{\Gtil}$ for the $b_{1} > 0$ case lie in the $0$ level of the $\red{\tld{R}}$ grading.

The proof for the $b_{i} < 0$ case contains analogous inductive steps corresponding to applying the braid generators $\sigma_{1}$ and $\sigma_{2}^{-1}$.  The fork diagram corresponding to only $\sigma_{2}^{b_{1}}$ includes $-b_{1}$ Bigelow generators with $\red{\tld{R}} = 1$.  So, all elements of $\red{\Gtil}$ lie in level $1$ of the $\red{\tld{R}}$ grading when $b_{1} < 0$.

We then have that for any $\bx \in \red{\G}$,
\begin{center}
$\red{\tld{R}}(\bx) =
 \begin{cases} 
0& \text{if $b_{1} > 0$}\\ 
1& \text{if $b_{1} < 0$}.
\end{cases}$
\end{center}
and the result for $\red{R}$ follows.
\end{proof}

\begin{lem}\label{goeritzlem}
Let $k$ be odd and let $b_1, b_2, \ldots, b_k$ be nonzero integers which are all of the same sign.  If the knot projection $D$ is given by the plat closure of $\sigma_{2}^{b_{1}}\sigma_{1}^{-b_{2}} \ldots \sigma_{1}^{-b_{k-1}}\sigma_{2}^{b_{k}}$ (as in Figure \ref{conwayodd} or its mirror image),  then
\begin{equation*}
sign(G(D)) = 
\begin{cases} 
-\bigg(\displaystyle\sum_{i\text{ even}} b_{i}\bigg)- 1& \text{if $b_{1} > 0$}\\
-\bigg(\displaystyle\sum_{i\text{ even}} b_{i}\bigg) + 1& \text{if $b_{1} < 0$}.
\end{cases}
\end{equation*}
\end{lem}

When proving Lemma \ref{goeritzlem}, we'll make use of the following fact.

\begin{lem}\label{PDlem}
For $i = 1, 2, \ldots, m$, choose real numbers $a_{i} \geq 2$.  Then the matrix given by
$$
A_m:= \left(\begin{array}{ccccc}
a_1 & -1 & & &\\
-1 & a_2 & \ddots & &\\
& -1 & \ddots & -1 &\\
& & \ddots & a_{m-1} & -1\\
& & & -1 &a_m
\end{array}\right)
$$
is positive-definite (the entries specified are the only non-zero ones).
\end{lem}
\begin{proof}[Proof of Lemma \ref{PDlem}]
For any $X:=\left( x_1, x_2, \ldots, x_m \right)^{T} \in \mathbb{R}^m$,
\begin{align*}
X^T A_m X &= x_1 \left( a_1 x_1 - x_2 \right) + x_m \left( a_m x_m - x_{m-1}\right) + \sum_{i = 2}^{m-1} x_i \left( a_i x_i - x_{i-1}-x_{i+1}  \right)\\
&\geq x_1 \left( 2 x_1 - x_2 \right) + x_m \left( 2 x_m - x_{m-1}\right) + \sum_{i = 2}^{m-1} x_i \left( 2 x_i - x_{i-1}-x_{i+1} \right)\\
& = x_1^2 + x_m^2 + \sum_{i = 1}^{(n-1)} \left( x_i - x_{i+1}\right)^2.
\end{align*}
\end{proof}

\begin{proof}[Proof of Lemma \ref{goeritzlem}]
We first consider the case in which $b_{1} > 0$.  Color the components of $\mathbb{R}^{2}$---$K$ such that the exterior region is black.  Then there are $b+1$ white regions, where $ b:=\displaystyle\sum_{i\text{ even}}  b_{i} + 1$.
 Label the white regions such that $X_0$ is the region on the right side of the Conway form diagram and $X_1, \dots, X_{b}$ are the ones on the left, labelled from top to bottom.  The Goeritz matrix $G(D)$ is a $(b \times b)$ matrix given by
 $$
\left(\begin{array}{ccccc}
a_1 & 1 & & &\\
1 & a_2 & \ddots & &\\
& 1 & \ddots & 1 &\\
& & \ddots & a_{b-1} & 1 \\
& & & 1 &a_b
\end{array}\right)
\quad \text{where} \quad
a_{i} = \begin{cases}
	-(b_{j} + 1) & \text{if $i = b_1$ or $i = b_k$}\\
	-(b_{j} + 2) & \text{if $i = b_j$ with $ 1 \leq j \leq k$}\\
	-2 & \text{otherwise}
\end{cases}
$$
Again, the entries shown are the only nonzero ones.

%

In particular, $G(D)$ is negative-definite by Lemma \ref{PDlem}, and
\begin{equation*}
sign(G(D)) = -b = -\bigg(\displaystyle\sum_{i\text{ even}}b_{i} \bigg) - 1.
\end{equation*}
When $b_{1} < 0$, the Goeritz matrix is positive-definite and has signature
\begin{equation*}
sign(G(D)) = -\bigg(\displaystyle\sum_{i\text{ even}}b_{i} \bigg) + 1.
\end{equation*}
\end{proof}

\begin{proof}[Proof of Theorem \ref{thm:Rsignthm}]
By Proposition \ref{prop:Rtildeprop}, all generators have the same $\red{R}$-filtration value.  However, Equation \ref{eqn:diff} implies that nonzero components of the differential $\widehat{\partial}$ on the filtered complex must decrease the $\red{R}$-level by at least one; consequently, $\widehat{\partial} \equiv 0$.  The knot $K$ is thus $\rho$-degenerate, since $\text{rk}\left( \widehat{HF}\left( \DBC{K}, \mathfrak{s}\right)\right) = 1$ for each $\mathfrak{s} \in \text{Spin}^{c}(\DBC{K})$.
For $b_{1}>0$, we have that
\begin{equation*}
\frac{e - w-2}{4} = \frac{1}{4}\Bigg( 
	\bigg(
		\displaystyle\sum_{i=1}^{k} (-1)^{i+1}b_{i}
	\bigg) 
	+ \mu_{I} - \mu_{II} - 2
\Bigg),
\end{equation*}
and
\begin{equation*}
\sigma(K) = sign(G(D)) - \mu_{II}(D) = -\Bigg(
	\displaystyle\sum_{i\text{ even}} b_{i}
\Bigg)- 1 - \mu_{II}(D).
\end{equation*}
Furthermore, it is easy to see that
\begin{equation*}
-(\mu_{I}(D) + \mu_{II}(D)) = \displaystyle\sum_{i = 1}^{k} b_{i}.
\end{equation*}
So, for each generator $\bx$, we then have that
\begin{align*}
 2\red{R}(\bx) - \sigma(K)& = \frac{1}{2} \Bigg( 
 	\bigg(
		\displaystyle\sum_{i=1}^{k} (-1)^{i+1}b_{i}
	\bigg)
	+2\bigg(
		\displaystyle\sum_{i \text{ even}}b_{i}
	\bigg)
	+ \mu_{I} + \mu_{II}
\Bigg)\\
&=  \frac{1}{2} \Bigg( 
 	\bigg(
		\displaystyle\sum_{i=1}^{k} b_{i}
	\bigg)
	+ \mu_{I} + \mu_{II}
\Bigg ) = \frac{1}{2}(0) = 0.
\end{align*}
The calculation for the $b_{1} <0$ case is similar.
\end{proof}

\section{LINKS}\label{sec:links}

As mentioned in Remark \ref{rmk:links}, the constructions appearing here (and indeed, the unreduced constructions appearing in \cite{et:R}) can be partially extended to the case of a link of several components.  The proof of invariance of filtered chain homotopy type in the unreduced case (Theorem 1 of \cite{et:R}) doesn't rely on the number of components, and so carries through to the case of a link.  A version of Proposition \ref{prop:RredR} holds for links, and so an analogue of Theorem \ref{thm:redRthm} does as well (as long as one restricts to torsion $\text{Spin}^c$-structures).  In particular, although it can be arranged (by choosing the right braid representative) that the deleted pair $\alpha_n$, $\beta_n$ belongs to any link component we like, the $\red{\rho}$-filtered chain homotopy type of $\widehat{CF}$ doesn't depend on this choice.

If $L$ is a two-bridge link, then $b_1(\DBC{L}) = 0$, and so all $\text{Spin}^c$-structures are torsion.  This allows one to define the filtrations $\red{R}$ and $\red{\rho}$ on the entire complex $\widehat{CF}(\DBC{L})$, and indeed Theorem \ref{thm:Rsignthm} holds for links as well (with the proof unchanged).

\section{FUTURE DIRECTIONS}

\subsection{$\rho$-degeneracy}
We have seen that when the knot $K$ is $\rho$-degenerate, then the function $\red{R}$ provides an absolute Maslov grading on the group $\widehat{HF}(\DBC{K})$.  It was established above that this occurs when $K$ is a two-bridge knot.  It is natural to ask whether a larger class of knots is $\rho$-degenerate, such as alternating knots or quasi-alternating knots.  Following Seidel and Smith in \cite{ss:R2}, we have conjectured above that all knots are $\rho$-degenerate.

\subsection{A possible concordance invariant}\label{futureconc}

Knot homology theories and Floer homology theories have been known to produce invariants of the knot concordance class.  Examples include Rasmussen's $s$-invariant in Khovanov homology \cite{ras:slice}, Manolescu and Owens's $\delta$-invariant in Heegaard Floer homology \cite{cmo:delta}, and Ozsv\'ath and Szab\'o's $\tau$-invariant in Knot Floer homology \cite{os:4ball}.  Conjecture \ref{conj:conc} speculates that $r$ also provides one.

Recall that the concordance invariants $s$, $\delta$, and $\tau$ have all been shown to be equal to a constant multiple of $\sigma$ (the classical signature) when restricted to the set of alternating knots.  We saw in Section \ref{sec:r} that $r = 3\sigma/4$ for two-bridge knots; one should ask whether this relationship extends to a larger class of knots, such as alternating of quasi-alternating knots.  In any case, one should compare $r$ to $s$, $\delta$, and $\tau$.

\subsection{Knot mutation}
Bloom showed in \cite{bloom:odd} that odd Khovanov homology is invariant under Conway mutation, while \OS proved in \cite{os:mutant} that knot Floer homology can distinguish a Kinoshita-Terasaka knot from one of its mutants.  Viro noted in \cite{viro:cover} that mutant links have homeomorphic double branched covers, and thus can't be distinguished by the Heegaard Floer homology groups discussed here.  However, one could ask whether this extra filtration structure on the chain complex can distinguish mutant knots.  Whether or not the mutants are $\rho$-degenerate, it is possible that the invariant $r$ could be used to distinguish them.

\subsection*{Acknowledgements}

It is my pleasure to thank Ciprian Manolescu for suggesting this problem to me and for his invaluable guidance as an advisor.  I would also like to thank Robert Lipshitz, Liam Watson, and Tye Lidman for some instructive discussions, Stephen Bigelow for some helpful email related to his paper \cite{big:jones}, Yi Ni for some useful comments regarding relative Maslov gradings, and Sucharit Sarkar for suggesting a definition for $r_{K}$.  Many thanks to Shelly Harvey for suggesting the proof of Lemma \ref{lem:cancel} given here.

I am also greatly indebted to the anonymous Referee, whose corrections and suggestions led to countless improvements to this article.

\bibliography{references}
\bibliographystyle{alpha}
\end{document}